\documentclass[11pt, a4paper]{amsart}

\usepackage[margin=2.5cm]{geometry}

\usepackage{epsf} 

\geometry{letterpaper}                   % ... or a4paper or a5paper or ...
\usepackage{listings} 
\usepackage{algorithmic,algorithm}
\usepackage{multirow}
\usepackage{enumerate}
\usepackage{booktabs}

\usepackage[]{graphicx}
\usepackage{amscd}
\usepackage[pdftex]{color} % black,white,red,green,blue,cyan,magen ta,yellow
\usepackage[pdftex,colorlinks]{hyperref}
\usepackage{lscape}
\usepackage{indentfirst}
\usepackage{latexsym}
\usepackage{amsmath, amsfonts, amssymb,mathrsfs}
\usepackage{verbatim}

\usepackage{tikz}
\usepackage{pgfplots, pgfplotstable}
\usepgfplotslibrary{groupplots}

\usepackage{bm}

\pdfstringdefDisableCommands{%
}

\usepackage[extra]{tipa}

\newcommand{\vertiii}[1]{{\left\vert\kern-0.25ex\left\vert\kern-0.25ex\left\vert #1 
    \right\vert\kern-0.25ex\right\vert\kern-0.25ex\right\vert}}

\newtheorem{theorem}{Theorem}
\newtheorem{lemma}[theorem]{Lemma}

\newtheorem{remark}{Remark}

\newtheorem{definition}{Definition}

% \numberwithin{equation}{section} \numberwithin{table}{section}
%\numberwithin{figure}{section}
%\numberwithin{algorithm}{section}
%\numberwithin{theorem}{section}

\DeclareMathOperator{\ddiv}{div}

%%%%%%%%%%%%%%%%%%%%
%\def\v{{\bm v}}

\def\g{{\bm g}}

\def\ooy{{\overline{\overline{\bm y}}}}
\def\oox{{\overline{\overline{\bm x}}}}
\def\ooX{{\overline{\overline{\bm X}}}}
\def\ooA{{\overline{\overline{\mathcal{A}}}}}
\def\hu{{\hat{\bm u}}}
\def\hv{{\hat{\bm v}}}
\def\hp{{\hat{p}}}
\def\hq{{\hat{q}}}

\def\sig{{\bm \sigma}}

\def\bU{{\boldsymbol U}}

\def\bu{{\boldsymbol u}}
\def\bv{{\boldsymbol v}}
\renewcommand{\div}{\operatorname{div}}

\newcommand{\ds}{\mathop{\mathrm{d} s}}
\newcommand{\dx}{\mathop{\mathrm{d} x}}

\renewcommand{\div}{\operatorname{div}}
\newcommand{\T}{{\operatorname{T}}}
%%%%%%%%%%%%%%%%%%%%
\def\g{\gamma}

\def\eps{\boldsymbol \epsilon}
\def\sig{\boldsymbol \sigma}

\def\bU{\boldsymbol U}

\def\bW{\boldsymbol W}

\def\bf{\boldsymbol f}

\def\bu{\boldsymbol u}
\def\bv{\boldsymbol v}
\def\bw{\boldsymbol w}

\def\bx{\boldsymbol x}
\def\by{\boldsymbol y}
\def\bz{\boldsymbol z}

\def\bn{\boldsymbol n}

\def\divv{\text{div}}

\def\tmu{\tilde{\mu}}
\def\tlambda{\tilde{\lambda}}
%%%%%%%%%%%%%%%%%%%%

\usepackage{siunitx}
\sisetup{scientific-notation=true,
  round-precision=1,
  round-mode=places,
  retain-zero-exponent=true}
\sisetup{exponent-product = {\!\cdot}, output-product = \cdot}
\newcommand{\numeoc}[1]{\num[round-precision=1,round-mode=places, scientific-notation=false]{#1}}

\title[Uniformly well-posed HDG/HM discretizations for Biot's model] {Uniformly well-posed hybridized discontinuous
    Galerkin/hybrid mixed discretizations for Biot's consolidation
    model}

  \thanks{% Submitted. % to the editors DATE.
    PL and JS acknowledge the funding by the Austrian Science Fund
    (FWF) through the research programm ``Taming complexity in partial
    differential systems'' (F65) - project ``Automated discretization
    in multiphysics'' (P10).}

  \author[J. Kraus]{Johannes Kraus}
\address{Faculty of Mathematics, University Duisburg-Essen, Germany}
\email{johannes.kraus@uni-due.de}

\author[P.~L.~Lederer]{Philip L. Lederer}
\address{Institute for Analysis and Scientific Computing, TU Wien, Austria}
\email{philip.lederer@tuwien.ac.at}

  \author[M. Lymbery]{Maria Lymbery}
\address{Faculty of Mathematics, University Duisburg-Essen, Germany}
\email{maria.lymbery@uni-due.de}

\author[J. Sch\"oberl]{Joachim Sch\"oberl}
\address{Institute for Analysis and Scientific Computing, TU Wien, Austria}
\email{joachim.schoeberl@tuwien.ac.at}

\begin{document}

\begin{abstract}
We consider the quasi-static Biot's consolidation model in a
three-field formulation with the three unknown physical quantities
of interest being the displacement $\bm u$ of the solid matrix, the
seepage velocity $\bm v$ of the fluid and the pore pressure $p$.  As
conservation of fluid mass is a leading physical principle in
poromechanics, we preserve this property using an
$\bm H({\rm div})$-conforming ansatz for $\bm u$ and $\bm v$
together with an appropriate pressure space. This results in Stokes
and Darcy stability and exact, that is, pointwise mass conservation
of the discrete model.

The proposed discretization technique combines a hybridized
discontinuous Galerkin method for the elasticity subproblem with a
mixed method for the flow subproblem, also handled by hybridization.
The latter allows for a static condensation step to eliminate the
seepage velocity from the system while preserving mass
conservation. The system to be solved finally only contains degrees
of freedom related to $\bm u$ and $\bm p$ resulting from the
hybridization process and thus provides, especially for higher-order
approximations, a very cost-efficient family of physics-oriented
space discretizations for poroelasticity problems.

We present the construction of the discrete model, theoretical
results related to its uniform well-posedness along with optimal
error estimates and parameter-robust preconditioners as a key tool
for developing uniformly convergent iterative solvers. Finally, the
cost-efficiency of the proposed approach is illustrated in a series
of numerical tests for three-dimensional test cases.
\end{abstract}

\keywords{
  Biot's consolidation model, strongly mass-conserving high-order
  discretizations, parameter-robust LBB stability, norm-equivalent
  preconditioners, hybrid discontinuous Galerkin methods, hybrid mixed
  methods}

\maketitle

%\tableofcontents

%\newpage

\section{Introduction}\label{sec:intro}

Poroelastic models describing the mechanical behaviour of fluid saturated
porous media find a wide range of applications in many different fields of science, medicine and engineering. 
%such as 
%earth sciences... . 
The theory of poroelasticity was initially conceived by Maurice Anthony Biot who, in the period between 1935 and 1962, 
see e.g.~\cite{Biot1941general, Biot1955theory}, 
proposed a soil consolidation model 
%an extension of soil consolidation models developed 
to calculate the settlement of structures placed on 
fluid-saturated porous soils.

Recently, interest in Biot's consolidation equations has been revived due to their newly discovered applications in medicine, 
see e.g.~\cite{Sebaa2008application} and~\cite{Guo2018subject},
where they have been studied in the context of human cancellous bone samples
and risk factors associated with the early stages of Alzheimer's disease, respectively. 
Their numerical solution has consequently been a subject of active research. 
One major challenge is that the parameters involved in Biot's model can vary over many orders of magnitude 
%in different practical applications 
and, therefore, it is vital that not only the variational formulation of the problem is stable
but also that the iterative solution method is uniformly convergent over the whole range of admissible model parameter values. 
%Other approaches, such as discretization by finite differences and finite volume methods 
%are also possible and have been considered 
%in~\cite{Axelsson2012stable, gaspar2003finite, gaspar2006staggered, nordbotten2016stable,Kumar2020conservative}.

A rigorous stability and convergence analysis for finite element (FE)
approximations of the two-field formulation of Biot's equations where
the velocity field has been eliminated from the unknowns has first
been presented in~\cite{Murad1992improved,Murad1994onstability}.  The
derived a priori error estimates are valid for both semidiscrete and
fully discrete formulations, where the backward Euler method is used
for time-discretization and inf-sup stable finite elements are used
for space discretization.

% In~\cite{Yi2014convergence}, a discretization for the four-field
% formulation of Biot's consolidation problem in two space dimensions
% has been proposed. The method uses the (effective) stress tensor,
% the fluid flux, the displacement and the pore pressure as unknowns
% and couples two standard mixed finite element methods, one for the
% flow and one for the mechanics subproblem, the latter of which is
% based on the Hellinger-Reissner formulation.  The error analysis of
% this coupled mixed method has been complemented
% in~\cite{Lee2016robust} by estimates in $L^{\infty}$ norm in time
% and $L_2$ norm in space that are robust with respect to the Lam\'e
% parameters and do not require strict positivity of the constrained
% storage coefficient.  Introducing the skew-symmetric part of the
% gradient of the displacement as an additional unknown, which acts as
% a Lagrange multiplier for enforcing the symmetry of the stress
% tensor, the method studied herein works with weakly symmetric stress
% approximations, which can be a preferable alternative especially in
% view of computational costs and ease of implementation,
% cf. hybridized mixed methods~\cite{arnold1985mixed, MR2051067}.

Other recent developements in discretizing Biot-type models are
related to the stabilization of conforming
methods~\cite{rodrigo2018newstabilized}, stable finite volume
methods~\cite{nordbotten2016stable}, discretizations for
total-pressure-based formulations~\cite{lee2017parameter,
  oyarzua2016locking}, including conservative discontinuous finite
volume and mixed schemes~\cite{Kumar2020conservative}, enriched
Galerkin methods~\cite{girault2020aposteriori, lee2018enriched},
space-time finite element approximations~\cite{bause2017spacetime},
and methods for two-phase flow and non-linear extensions of the Biot
problem~\cite{lee2018enriched, radu2018arobust}, to mention only but a
few. Finally, and, nevertheless, important in the context of the
present research, are the extensions of abovementioned discretization
techniques to multicompartmental (multiple network) poroelasticity
problems presented in~\cite{lee2019spacetime, hong2020parameter}.

The subject of the study in this paper is the standard three-field
formulation of Biot's model in which the unknown fields are the
displacement, seepage velocity and fluid pressure. Discretizations
based on three-field-formulation have originally been proposed
in~\cite{phillips2007coupling_a, phillips2007coupling_b} where
con\-tinuous-in-time and discrete-in-time error estimates have been
proved. This approach has also been extended to discountinuous
Galerkin approximations of the displacement field
in~\cite{phillips2008coupling} and other nonconforming approximations,
e.g., using modified rotated bilinear elements~\cite{Yi2013coupling},
or Crouzeix-Raviart elements for the displacements in
\cite{hu2017anonconforming}. More recently, in~\cite{HongKraus2018}, a
family of strongly mass conserving discretizations based on the
$\bm H$(div)-conforming dicontinuous Galerkin (DG) discretization of the
displacement field has been suggested and its parameter-robust
stability and near best approximation properties 
%with respect to allmodel and discretization parameters 
proven. Time-dependent error
estimates for the same family of discretizations have been proved
in~\cite{kanschat2018afinite}.
%and for a hybridized two-field version in
%\cite{FU2019237}.
Note that these approaches are based on the inf-sup stability of the
corresponding Stokes discretization scheme which were originally
stated in~\cite{CockburnEtAl2005locally, CockburnEtAl2002local,
  cockburn2007note} and the Brinkman problem
\cite{stenberg2011,MR2860674}.

Hybridization techniques have been applied to discretizations of
Biot's model in the recent works \cite{FU2019237} and
\cite{Niu2020}. Whereas in \cite{FU2019237} the authors introduced a
hybridized $\bm H$(div)-conforming DG method for the two-field
formulation, the work \cite{Niu2020} starts from a lowest-order
conforming stabilized discretization of the three-field formulation
and uses hybridization for the flow subsystem as it was first
presented in the \cite{MR813687}.

The aim of the present work is the construction, analysis and
numerical testing of a new family of higher-order mass-conserving
hybridized/hybrid mixed FE discretizations for the three-field
formulation of Biot's model.  The main focus lies on a well-posedness
analysis in properly scaled norms resulting in estimates with
constants that are independent of any problem parameters.  As a
consequence, we obtain norm-equivalent preconditioners and optimal
near best approximation estimates. 

%utilizing a mixed-hybrid formulation for the flow problem and static
%condensation techniques. This approach is demonstrated to be highly
%efficient and especially for higher-order approximations.

The paper is structured as follows. In Section~\ref{sec:probl} the
governing equations are stated and the three-field formulation of
Biot's model is discussed.  Its semi-discretization in time by the
implicit Euler method along with a proper rescaling of the parameters
results in a static boundary value problem and is presented in
Section~\ref{sec:discr}.  The latter then is discretized in space by a
new family of hybridized discontinuous Galerkin/hybrid mixed methods
while addressing the advantages of this approach.  The main
theoretical results follow in Section~\ref{sec:preco} where the
uniform boundedness and the parameter-robust inf-sup stability of the
underlying bilinear form are proven to be independent of all model and
discretization parameters. Furthermore, the corresponding
parameter-robust preconditioners and error estimates are provided.  In
Section~\ref{numres} the theoretical results of this paper are
complemented by a series of numerical tests assessing the
approximation quality and cost efficiency of these preconditioners for
the proposed family of higher-order hybridized discontinuous
Galerkin/hybrid mixed discretizations.
%%%%%%%%%%%%%%%%

%%%%%%%%%%%%%%%%

\section{Problem formulation}\label{sec:probl}

\subsection{Governing equations}

We consider a porous medium, which is linearly elastic, homogeneous,
isotropic and saturated by an incompressible Newtonian fluid.
Then Biot's consolidation model, see~\cite{Terzaghi1925erdbaumechanic,Biot1941general}, for a bounded
Lipschitz domain $\Omega\in\mathbb{R}^d$, $d\in\{2,3\}$,
\begin{subequations}\label{eq::twofield}
  \begin{alignat}{2} 
-\divv (2 \tmu \epsilon(\bu)) - \tlambda \nabla \divv \bu + \alpha \nabla p
&= \tilde{\bf},  \quad &&\mbox{in } \Omega \times (0,T), \\
\frac{\partial}{\partial t}(S_0 p + \alpha \divv \bu) - \divv (K \nabla p)
&= \tilde{g},  \quad &&\mbox{in } \Omega \times (0,T),
\end{alignat}
\end{subequations}
relates the deformation $\bu$ and the fluid pressure $p$ 
% of
%an isotropric elastic porous medium saturated with an incompressibe
%fluid 
for a given body force density $\tilde{\bf}$ and mass source or sink $\tilde{g}$.  
For convenience, we assume a scalar conductivity
coefficient $K$. In this work, we use bold symbols to denote vector- or
tensor-valued quantities, e.g., $\eps(\bu) := \frac{1}{2}(\nabla \bu + (\nabla \bu)^T)$
denoting the symmetric gradient. Further,
$\tlambda$ and $\tmu$ are the Lam\'e parameters, $\alpha$ is the Biot-Willis parameter and $S_0$
the constrained specific storage coefficient. 

The three-field~\cite{phillips2007coupling_a,phillips2008coupling}
formulation is based on the primary variables $(\bu, \bm{w}, p)$,
i.e.,
\begin{alignat*}{2}
- \divv {\sig} &= \tilde{\bf}, \quad &&\mbox{in } \Omega \times (0,T), \\
K^{-1} \bm{w} + \nabla p &= {\boldsymbol 0}, \quad &&\mbox{in } \Omega \times (0,T), \\
\frac{\partial}{\partial t}(S_0 p + \alpha \, \divv \, \bu) + \divv \, \bm{w}
&=\tilde{g},  \quad &&\mbox{in } \Omega \times (0,T),
%\A {\sig} - \left(\eps(\bu) - \frac{\alpha}{d\tlambda + 2\tmu} p\boldsymbol I \right) &= 0 , \quad &&\mbox{in } \Omega \times (0,T),
\end{alignat*}
where $\bm{w}$ denotes the seepage velocity,
$\tilde {\sig} := 2 \tmu \eps(\bu) + \tlambda \divv (\bu)
\boldsymbol I$ is the total stress and
${\sig} = \tilde{\sig} - \alpha p \boldsymbol I$ the effective
stress.
%Here $\A$ denotes the fourth order compliance tensor of linear elasticity defined by 
%\begin{align*}
%\A \ta &:=\frac{1}{2 \tmu} \left( \ta - \frac{\tlambda}{d \tlambda + 2 \tmu} {\text tr} (\ta) \boldsymbol I \right).
%\end{align*}
%In this work we consider the three-field formulation
%obtained by eliminating the stress variable $\bm \sigma$ using the
%above constitutive relations.
If not mentioned otherwise, we assume homogeneous Dirichlet
boundary conditions for the displacement $\bu$ and homogeneous
Neumann conditions for the pressure~$p$. 
In this context, let $\bm{H}^1_0(\Omega), \bm{H}_0(\ddiv, \Omega)$ denote the standard
vector-valued Sobolev spaces where the subscript $0$ refers to
homogeneous essential boundary conditions. Further, let $L^2_0(\Omega)$
denote the space of square integrable functions with zero mean
value. Following the standard procedure, one 
%multiply each equation with suitable test functions and integrate by parts to 
derives the weak formulation: Find
$(\bm{u},\bm{w}, p)\in \bm{H}^1_0(\Omega) \times \bm{H}_0(\ddiv,
\Omega) \times L^2_0(\Omega)$ such that
\begin{subequations}\label{eqn:weak-3f}
\begin{alignat}{2}
\tilde{a}(\bm{u},\bm{v}) - (\alpha \,p,\ddiv \bm{v}) &= (\tilde{\bm f},\bm{v}),
     \quad &&\forall \bm{v}\in \bm{H}^1_0(\Omega), \label{eqn:weak-3f-1}\\
(K^{-1}\bm{w},\bm{z}) - (p,\ddiv \bm{z}) &= 0, \quad && \forall  \bm{z}\in \bm{H}_0(\ddiv, \Omega),\label{eqn:weak-3f-2}\\
- (\alpha\ddiv \partial_t \bm{u},q) - (\ddiv \bm{w},q) - (S_0 \partial_t  p, q) &= -(\tilde{g},q), \quad && \forall q \in L^2_0(\Omega),\label{eqn:weak-3f-3}
\end{alignat}
\end{subequations}
where
\begin{align}
\tilde{a}(\bm{u},\bm{v}) &:= 2\tmu\int_{\Omega}\varepsilon(\bm{u}):\varepsilon(\bm{v}) \dx + \tlambda\int_{\Omega} \ddiv\bm{u}\ddiv\bm{v} \dx.
\label{u_form_2f}
%\\
%\tilde{a}_p(p,q) &= \int_{\Omega} K\nabla p\cdot \nabla q + \int_{\Omega} S_0 \partial_t  p \, q .
%\label{p_form_2f}
\end{align}
Finally, system \eqref{eqn:weak-3f} is completed with suitable
initial conditions $\bm{u}(\cdot, 0) = \bm{u}_0(\cdot)$ and
$p(\cdot, 0) = p_0(\cdot)$.

\section{Hybridized discontinuous Galerkin/hybrid mixed discretizations of the Biot problem}\label{sec:discr}

\subsection{Strongly mass-conserving discretization of the Biot problem}

The starting point for this subsection is a family of strongly
mass-conserving discretizations of the three-field formulation of the
quasi-static Biot model based on a discontinuous Galerkin (DG)
formulation for the mechanics subproblem, as proposed
in~\cite{HongKraus2018}. After time discretization by the implicit
Euler scheme, the method for the arising static problem in each time
step can be expressed as follows:

Find the time-step functions 
$(\bm{u}^k, \bm{w}^k, p^k) := (\bm{u}(\bx,t_k),\bm{w}(\bx,t_k), p(\bx,t_k))\in \bm{H}^1_0(\Omega) \times \bm{H}_0(\ddiv, \Omega) \times L^2_0(\Omega)$ 
which solve the following system of equations 
\begin{subequations}\label{eqn:weak-3f-iE}
\begin{alignat}{2}
\tilde{a}(\bm{u}^k,\bm{v}) - (\alpha \,p^k,\ddiv \bm{v}) &= (\tilde{\bm f}^k,\bm{v}),
     \quad &&\forall \bm{v}\in \bm{H}^1_0(\Omega), \label{eqn:weak-3f-1-iE}\\
(K^{-1}\bm{w}^k,\bm{z}) - (p^k,\ddiv \bm{z}) &= 0, \quad &&
\forall  \bm{z}\in \bm{H}_0(\ddiv, \Omega),\label{eqn:weak-3f-2-iE}\\
- (\alpha\ddiv (\bm{u}^k-\bm{u}^{k-1}),q)  - \tau(\ddiv \bm{w}^k,q) 
  - (S_0 (p^k-p^{k-1}), q) & = -\tau(\tilde{g}^k,q), 
 \quad &&\forall q \in L^2_0(\Omega),\label{eqn:weak-3f-3-iE}
\end{alignat}
\end{subequations}
where $\tau$ is the time-step parameter and $\tilde{\bm f}^k = \tilde{\bm f}(\bx,t_k)$, $\tilde{ g}^k = \tilde{ g}(\bx,t_k)$.

For the space discretization, consider a shape-regular triangulation $\mathcal{T}_h$ whose set of facets are denoted by~$\mathcal{F}_h$.
We introduce the following finite element spaces
\begin{align*}
\bm U_h&:=\{\bv \in \bm H_0(\divv, \Omega):\bv|_T \in \bU(T),~T \in \mathcal{T}_h\}, \\
\bm W_{h}&:=\{\bz \in \bm H_0(\divv ,\Omega):\bz|_T \in \bm W(T),~T \in \mathcal{T}_h\},
\\
P_{h}&:=\{q \in L_0^2(\Omega):q|_T \in P(T),~T \in \mathcal{T}_h\}.
\end{align*}
The local spaces $\bm U(T)$, $\bm W(T)$, $P(T)$ are either 
$\text{BDM}_{\ell}(T)$, $\text{RT}_{\ell-1}(T)$,
$\text{P}_{\ell-1}(T)$ or by $\text{BDFM}_{\ell}(T)$,
$\text{RT}_{\ell-1}(T)$, $\text{P}_{\ell-1}(T)$
where $\text{BD(F)M}_{\ell}(T)$, $\text{RT}_{\ell-1}(T)$, and
$\text{P}_{\ell-1}(T)$ denote the local
Brezzi-Douglas-(Fortin-)Marini space of order $\ell$, the
Raviart-Thomas space of order $\ell-1$, and full polynomials of degree
$\ell-1$, respectively. A definition of these local spaces can be found, for example, in~\cite{Boffi2013mixed}.

We present the definitions of some trace operators next.  Let
$F = \partial T_1 \cap \partial T_2$ be a common facet of two adjacent elements 
$T_1,T_2 \in \mathcal{T}_h$, and let $\bm n_1, \bm n_2$ be
the corresponding outward pointing unit normal vectors.
%to be unit normal vectors to $F$ directed to the
%exterior of $T_1$ and $T_2$, respectively.
%
%....
%For any edge (or face) 
%$F \in \mathcal{E}_h^{I}$ 
For any interior facet $F \not \subset \partial \Omega$ and
element-wise smooth and scalar-valued function $q$, vector-valued function $\bm v$ and
tensor-valued function $\bm \tau$, 
their averages and jumps on the facet $F$ are defined by 
\begin{equation*}
  \{\bm v\} =\frac{1}{2}(\bm v_1\cdot \bm n_1-\bm v_2\cdot \bm n_2), \quad \{\bm \tau\}=\frac{1}{2}(\bm
  \tau_1 \bm n_1-\bm \tau_2 \bm n_2), \quad
  [q]=q_1-q_2,\quad [\bm v]=\bm v_1-\bm v_2,
\end{equation*}
where the subscript $i$, $i = 1,2$, with the functions $q$, $\bm v$ and $\bm \tau$ refers to their evaluation on $T_i \cap F$. 
%\begin{equation*}
%\begin{split}
%\{\bm v\} &=\frac{1}{2}(\bm v|_{\partial T_1\cap F}\cdot \bm n_1-\bm
%v|_{\partial T_2\cap F}\cdot \bm n_2), \quad 
%\{\bm \tau\}=\frac{1}{2}(\bm \tau|_{\partial T_1\cap
%F} \bm n_1-\bm \tau|_{\partial T_2\cap F} \bm n_2),
%\end{split}
%\end{equation*}
%and the jumps as
%\begin{equation*}
%[q]=q|_{\partial T_1\cap F}-q|_{\partial T_2\cap F},\quad
%[\bm v]=\bm v|_{\partial T_1\cap F}-\bm v|_{\partial T_2\cap F}.
%\quad
 %\Lbrack\bm v\Rbrack=\bm v|_{\partial T_1\cap F}\odot \bm n_1+\bm v|_{\partial T_2\cap F}\odot \bm n_2,
%\end{equation*}
%where $\bm v \odot \bm n=\frac{1}{2}(\bm{v} \bm{n}^T+\bm n \bm v^T)$ is the 
%symmetric part of the tensor product of $\bm v$ and $\bm n$.
For any boundary facet $F \subset \partial \Omega$, 
%$e \in  \mathcal{E}_h^{B}$ then the above quantities are defined as
these quantities are given as
\[
\{\bm v\}=\bm v |_{F}\cdot \bm n,\quad
\{\bm \tau\}=\bm \tau|_{F}\bm n, \quad
[q]=q|_{F}, \quad [\bm v]=\bm v|_{F}.
%, \quad  \Lbrack\bm v\Rbrack=\bm v|_{F}\odot \bm n.
\]
With these definitions at hand, the formulation of the method is as
follows: Find
$(\bu_h, \bw_h, p_h ) \in \bU_h \times \bm W_h \times P_h$, such that
%for any
%$(\bv_h, \bz_h, q_h) \in \bm U_h \times \bm W_h \times  P_h$ it holds that
%
\begin{subequations}\label{Cons-disc-3-field}
\begin{alignat}{2}
 a_{h}(\bu_h,\bv_h)- 
 (p_{h}, \divv \bv_h)
 &=(\bf, \bv_h),\quad &&\forall \bv_h \in \bm U_h, \label{Cons-disc-3-field_a} \\
  (R^{-1}\bw_{h},\bz_{h}) {-} (p_{h},\divv \bz_{h}) &= 0, \quad &&\forall \bz_h \in \bm W_h, \label{Cons-disc-3-field_b} \\
 	 -\ (\divv \bu_h,q_{h}) - (\divv\bw_{h},q_{h}) 
 	 -(S p_{h},q_{h}) 
 	 &= (g,q_{h}), \quad &&\forall q_h \in P_h.  \label{Cons-disc-3-field_c}
\end{alignat}
\end{subequations}
This system has been derived by dividing system~\eqref{eqn:weak-3f-iE} by $2 \tmu$ and, additionally, 
equation~\eqref{eqn:weak-3f-2-iE} by the time step size $\tau$ and furthermore by applying the 
substitutions $\bm u_h=\alpha \bm u_h^k$, $\bm w_h = \tau \bm w_h^k$, $p_h = \alpha^2 p_h^k$. 
%with $t_k = t_{k-1} + \tau$ denoting a given time moment.
The right-hand sides in~\eqref{Cons-disc-3-field} are 
$\bf= \alpha\tilde{\bf}(\bx,t_{k})/2\tmu$ and $g = (\tau \tilde{g}(\bx,t_{k})-\alpha \divv(\bu_h(\bx,t_{k-1})) -
c_{0}p(\bx,t_{k-1}))/2\tmu$,

%that is,
%$\bq_h=(\bq_{1,h}^T,\bq_{2,h}^T,\ldots,\bq_{n,h}^T)^T, \bz_h=(\bz_{1,h}^T,\bz_{2,h}^T,\ldots,\bz_{n,h}^T)^T
% \in \bQ_h=\bQ_{1,h} \times \bQ_{2,h} \times \ldots \times \bQ_{n,h}$ and
% $\bp_h=(p_{1,h},p_{2,h},\ldots,p_{n,h})^T,\br_h=(r_{1,h},r_{2,h},\ldots,r_{n,h})^T \in \bP_h=P_{1,h} \times P_{1,h} \times \ldots \times P_{n,h}$.
%
$$
a_h(\bu_h,\bv_h) := a_h^{\text{DG}}(\bu_h,\bv_h) + \lambda \int_{\Omega}  \divv \bu_h\, \divv \bv_h \dx
$$
and 
\begin{equation}\label{scaled_param}
\lambda:=\frac{\tlambda}{2\tmu},\quad R:=\frac{2\tmu\tau}{\alpha^2} K  > 0, \quad S:=\frac{2 \tmu S_0}{\alpha^2}.
\end{equation}
Note that the discrete bilinear form $a_h(\cdot,\cdot)$ is obtained from scaling the bilinear form in~\eqref{u_form_2f} by 
$1 / 2 \tmu$.
%$$a(\bu,\bv)=  \int_{\Omega}\varepsilon(\bm{u}):\varepsilon(\bm{v}) \dx +\lambda \int_{\Omega}  \divv \bu\, \divv \bv \dx.
%$$
We denote the tangential component of any vector field on a facet by its symbol with a 
subscript~$t$. 
Then the symmetric interior penalty Galerkin (SIPG) bilinear form $a_h^{\text{DG}}(\cdot,\cdot)$ is 
defined as
\begin{align}
a_h^{\text{DG}}(\bu,\bv)  :=& \sum _{T \in \mathcal{T}_h} \int_K \eps(\bu) : \eps(\bv) \dx 
-\sum_{F \in \mathcal{F}_h} \int_F \{\eps(\bu)\} \cdot [\bv_t] \ds
\nonumber \\
&- \sum _{F \in \mathcal{F}_h} \int_F \{\eps(\bv)\} \cdot [\bu_t] \ds 
+ \sum _{F \in \mathcal{F}_h} \eta \ell^2 h_F^{-1} \int_F [ \bu_t] \cdot [\bv_t] \ds
\label{a_h_DG}
\end{align}
with a sufficiently large stabilization parameter $\eta$ independent
of all model parameters, i.e., $\lambda, \, R, \, S$, and discretization parameters $h$ and $\tau$.  
%The subscript $t$ when used with a vector variable indicates that we take the tangential component of its vector field.
%whereas the parameter $\tau$ 
%denotes the time step size and 
Note that in this paper the constants in all parameter robust estimates are independent of
%constants parameter robustness is also understood as
%independence of 
model and discretization parameters.

%\subsection{Hybridized DG method for the mechanics subproblem}
\subsection{Hybridized DG method} \label{sec::HDG} When dealing with
Stokes-type problems, $\bm H(\text{div})$-conforming discretizations
possess several advantages over $H^1$-conforming
discretizations. This is mainly due to the fact that they allow for a
suitable approximation of the incompressibility constraint which
results in favorable properties such as pointwise divergence-free
solutions and pressure robustness, see,
e.g.,~\cite{CockburnEtAl2002local,CockburnEtAl2005locally}.  However,
the incorporation of (tangential) continuity in standard DG schemes
leads to a significantly increased number of (globally) coupled
degrees of freedom~(dof). To overcome this, in
\emph{hybridized DG methods}, one decouples element unknowns by
introducing additional unknowns on the facets through
which (tangential) continuity is imposed weakly, see,
e.g.,~\cite{CockburnEtAl2009unified,LehrenfeldSchoeberl2016high}.

In the context of an $\bm H(\text{div})$-conforming hybridized DG discretization, one introduces an additional
space 
%$\widehat{\bU}_h$ 
$$
\widehat{\bU}_h:=\{ \hat{\bu} \in \bm{L}^2(\mathcal{F}_h): \hat{\bu}|_F \in \bm P_{\ell}(F) \textrm{ and }  
\hat{\bu}|_F \cdot \bn=0, ~ F\in \mathcal{F}_h; ~ \hat{\bu} = \bm 0 \text{ on }\partial{\Omega} \}
$$
for the approximation of the tangential trace of the displacement
field $\bu$. Here, $\bm{L}^2(\mathcal{F}_h)$ denotes the space of vector-valued
square integrable functions on the skeleton $\mathcal{F}_h$ and
$\bm P_{\ell}(F)$ the vector-valued polynomial space of order
$\ell$ on each facet $F \in \mathcal{F}_h$. We replace the
bilinear form $a_h^{\text{DG}}(\cdot,\cdot)$ defined in~\eqref{a_h_DG}
by $a_h^{\text{HDG}}(\cdot,\cdot)$ given by
\begin{align}
a_h^{\text{HDG}}((\bu,\hat{\bu}),(\bv,\hat{\bv})):=&
\sum_{T\in\mathcal{T}_h}\left[ \int_{T} \eps(\bu) : \eps(\bv) \, \dx + \int_{\partial T} \eps(\bu) \bn \cdot (\hat{\bv}-\bv)_t\, \ds \right. \nonumber \\
&+ \left. 
\int_{\partial T} \eps(\bv) \bn \cdot (\hat{\bu}-\bu)_t\, \ds +  \eta \ell^2 h^{-1}\int_{\partial T} (\hat{\bu}-\bu)_t \cdot (\hat{\bv}-\bv)_t \ds \right],
\label{a_h_HDG}
%%\right. \nonumber 
%\\
%&+&\left. \eta \ell^2 h^{-1}\int_{\partial T} (\hat{\bu}-\bu)_t \cdot (\hat{\by}-\by)_t \,ds \right], \label{a_h_HDG}
\end{align}
%\begin{eqnarray}
%a_h^{\text{HDG}}((\bu,\hat{\bu}),(\by,\hat{\by}))&:=&
%\sum_{T\in\mathcal{T}_h}\left[ \int_{T} \eps(\bu) : \eps(\by) \, d x \right. \nonumber \\
%&+& \left. \int_{\partial T} \eps(\bu) \bn \cdot (\hat{\by}-\by)_t\, \ds
%+\int_{\partial T} \eps(\by) \bn \cdot (\hat{\bu}-\bu)_t\, \ds \right. \nonumber \\
%&+&\left. \eta \ell^2 h^{-1}\int_{\partial T} (\hat{\bu}-\bu)_t \cdot (\hat{\by}-\by)_t \,ds \right], \label{a_h_HDG}
%\end{eqnarray}
where $(\bu, \hat{\bu})$,
$(\bv, \hat{\bv})\in \overline{\bU}_h := \bU_h \times
\widehat{\bU}_h$.  Our approach for exactly divergence-free hybridized
discontinuous Galerkin methods will be based on~\cite{LehrenfeldSchoeberl2016high} as well as its improvements
presented in~\cite{LedererEtAl2019hybrid,LedererEtAl2018hybrid}.  The
resulting method for the Biot problem now reads as: Find
$((\bu_h, \hat{\bu}_h), \bw_h, p_h ) \in \overline{\bm U}_h \times \bm
W_h \times P_h$, such that
%for any
%$((\bv_h, \hat{\bv}_h), \bz_h, q_h) \in \overline{\bm U}_h \times \bm W_h \times  P_h$ it holds that
%
\begin{subequations}\label{Cons-disc-3-field-HDG}
\begin{alignat}{2}
  a_{h}((\bu_h, \hat{\bu}_h),(\bv_h, \hat{\bv}_h))- (p_{h}, \divv
  \bv_h)
  &=(\bf, \bv_h), \quad &&\forall (\bv_h, \hat{\bv}_h) \in \overline{\bm U}_h, \label{Cons-disc-3-field-HDG_a} \\
  (R^{-1}\bw_{h},\bz_{h}) {-} (p_{h},\divv \bz_{h}) &= 0,  \quad &&\forall \bz_h \in \bm W_h,  \label{Cons-disc-3-field-HDG_b} \\
  -\ (\divv \bu_h,q_{h}) - (\divv\bw_{h},q_{h}) -(S p_{h},q_{h}) &=
  (g,q_{h}), \quad && \forall q_h \in
  P_h, \label{Cons-disc-3-field-HDG_c}
\end{alignat}
\end{subequations}
where 
\begin{equation}\label{a_h}
a_{h}((\bu_h, \hat{\bu}_h),(\bv_h, \hat{\bv}_h)) := a_h^{\text{HDG}}((\bu_h,\hat{\bu}_h),(\bv_h,\hat{\bv}_h)) 
+\lambda \int_{\Omega}  \divv \bu_h \divv \bv_h \dx
\end{equation}
and $a_h^{\text{HDG}}((\cdot,\cdot),(\cdot,\cdot))$ is defined in~\eqref{a_h_HDG}.

%\subsection{Hybrid mixed method for the flow subproblem}
\subsection{A family of hybridized DG/hybrid mixed methods} \label{sec::hybridizeddarcy} 
In this subsection, we enrich the hybridization idea by additionally introducing a hybrid mixed formulation for the flow 
subproblem. 
%The parameter-robust
%norm-equivalent preconditioners discussed
%in~\cite{HongKraus2018,Hong2019conservativeMPET} were based on the
%stability analysis of the three field formulation of the Biot-problem,
%see equation \eqref{Cons-disc-3-field}. 
While the stability analysis presented in~\cite{HongKraus2018,Hong2019conservativeMPET} 
uses properly scaled $\bm H(\text{div})$ and an $L^2$ norms for the flow subproblem, we hybridize 
the latter one in the present work. This approach has the advantage that when solving the full saddle-point problem 
with some preconditioned iterative method, one needs to invert a div-grad type operator instead of a grad-div operator in order to 
apply the preconditioner which is easier and more cost-efficient in general. 
% 
%For the analysis of the Darcy flow subproblem we use standard scaled Sobolev norms, thus an
%$\bm H(\text{div})$-norm and an $L^2$-norm for the velocity and the
%pressure, respectively. A disadvantage of this approach is that a
%preconditioned system requires a grad-div type solver for the
%velocity. To overcome this problem, we will hybridize the flow sub
%problem which allows to choose different norms for the analysis
%resulting in standard (DG-like) div-grad like preconditioners for the
%final system. 
Note that the solution of the hybridized system is the
same as that of the non-hybridized one.
%
%In the next section we will then extend the stability analysis originally presented in~\cite{HongKraus2018,Hong2019conservativeMPET} 
%to the resulting new family of strongly mass-conserving discretizations. 

The additional hybridization step can be expressed as follows. First, one enforces the normal continuity of the velocity by a Lagrange 
multiplier. To this end, we introduce the following finite element spaces

\begin{align*}
  \bm W_{h}^-&:=\{\bz \in \bm L^2(\Omega):\bz|_T \in \bm W(T),~T \in \mathcal{T}_h \},\quad
               %;~ \bz \cdot
%\bm n=0~\hbox{on}~\partial \Omega\},\\
  \widehat P_h :=\prod_{F\in \mathcal{F}_h}\text{P}_{l-1}(F),\quad
                 %\left\{p_F \in\prod_{F\in \mathcal{F}_h}\text{P}_{l-1}(F): \; 
                 %p_F = 0 \text{ on }\partial{\Omega}  \right\}, \\
  \overline P_h := P_h \times \widehat P_h,                 
\end{align*}
%\plnote{Randbedingungen muessen wir noch anpassen, je nachdem was wir dann machen. Dann auch ein kommentar dass die natuerlichen und essentiellen RB sich wieder umdrehen.}
where $\bm W(T)$ can be chosen in the same way as before. Here, the space $\bm W_{h}^-$ 
is simply a discontinuous version of the space $\bm W_{h}$. Further, note that $\widehat P_h$ is chosen as 
the normal trace space of $\bm W_h$, e.g., in the case $\bm W(T) = RT_0$ (thus $l-1 = 0$) the normal traces on each facet 
are constant and so correspondingly we also choose $\widehat P_h$ to be defined as facet wise constants. 
Based on these spaces, we next define for all $\bm w_h \in \bm W_{h}^-$ and $(p_h, \hat p_h) \in \overline P_h$ the bilinear form
\begin{align}\label{form_b}
  b((p_h, \hat p_h),\bm w_h) = \sum_{T \in \mathcal{T}_h} \left(\int_T \div \bm w_h p_h \dx - \int_{\partial T} \bm w_h \cdot \bm n \hat p_h \ds\right).
\end{align}
This bilinear form can be interpreted as a distributional version of the
inner product ``$(\div \bw_h, p_h)$''
since functions in $\bm W_{h}^-$ are not normal continuous. 
Therefore, variational problem~\eqref{Cons-disc-3-field-HDG}, when using a hybrid mixed formulation of the flow subproblem, 
is expressed as: 
%turns into:
%following the definition
%of~\eqref{Cons-disc-3-field-HDG}, the resulting formulation now reads as: 
Find
$((\bu_h, \hat{\bu}_h), \bw_h, (p_h,\hat p_h) ) \in \overline{\bU}_h
\times \bm W^-_h \times \overline P_h$, such that
%for any
%$((\bv_h, \hat{\bv}_h), \bz_h, (q_h \hat q_h)) \in \overline{\bU}_h
%\times \bm W^-_h \times \overline P_h$ it holds that
%
\begin{subequations}\label{Cons-disc-3-field-HDG-HM}
\begin{alignat}{2}
 a_{h}((\bu_h, \hat{\bu}_h),(\bv_h, \hat{\bv}_h))- 
 (p_{h}, \divv \bv_h)
 &=(\bf, \bv_h), \quad &&\forall (\bv_h, \hat{\bv}_h) \in \overline{\bU}_h,\label{Cons-disc-3-field-HDG-HM_a} \\
  (R^{-1}\bw_{h},\bz_{h}) {-} b((p_{h}, \hat p_h), \bz_{h}) &= 0, \quad && \forall \bz_h \in  \bm W^-_h, \label{Cons-disc-3-field-HDG-HM_b} \\
 	 -\ (\divv \bu_h,q_{h}) - b((q_{h},\hat q_h),\bw_{h}) 
 	 -(S p_{h},q_{h}) 
 	 &= (g,q_{h}),  \quad &&\forall (q_h \hat q_h) \in  \overline P_h. \label{Cons-disc-3-field-HDG-HM_c}
\end{alignat}
\end{subequations}
Note that if we test this system with the test function $((\bm 0,\bm 0), \bm 0, (0,\hat q_h))$, we obtain
\begin{align*}
  b((0, \hat p_h),\bm w_h) = \sum_{T \in \mathcal{T}_h} \int_{\partial T} \bm w_h \cdot \bm n \hat p_h \ds = \sum_{F \in \mathcal{F}_h} \int_F [ \bm w_h \cdot n] \hat p_h \ds = 0.
\end{align*}
Hence, by choosing $\hat q_h = [ \bm w_h \cdot n]$ on each facet
$F \in \mathcal{F}_h$, the above equation demonstrates that the velocity
solution of \eqref{Cons-disc-3-field-HDG-HM} is normal continuous,
i.e. $\bm w_h \in \bm W_h$.

In the next section, we extend the parameter-robust stability
results from~\cite{HongKraus2018,Hong2019conservativeMPET} to the
hybridized three-field formulation given by systems~\eqref{Cons-disc-3-field-HDG} and~\eqref{Cons-disc-3-field-HDG-HM}.
%involving the bilinear
%form~\eqref{a_h_HDG}.
%This requires to define the
%proper parameter-depend norms for the facet space, resulting in
%parameter-robust norm-equivalent preconditioners and optimal optimal
%error estimates for the new variables.

\section{Parameter-robust stability, preconditioners and optimal error estimates}\label{sec:preco}

\subsection{Parameter-robust well-posedness}\label{sec::wellposed}

\subsubsection{Parameter-dependent norms}

%First, we present the parameter-dependent norms in which inf-sup conditions
%and continuity can be proven for the continuous problem as well as the
%different discretizations with constants independent of any physical
%parameters.

First, let us recall the norms previously used in the parameter robust stability analysis presented in~\cite{HongKraus2018}. 
These are, for the infinite dimensional spaces $\bm U, \bm W, P$, 
\begin{subequations}\label{norms_continuous_spaces}
\begin{align}
\Vert \bm v\Vert_{\bm U}^2 & := \Vert \eps (\bm v) \Vert^2_0 + \lambda \Vert \divv \bm v \Vert^2_0, \label{norm_U} \\
  \Vert \bm z\Vert^2_{\bm W} & := R^{-1} \Vert \bm z \Vert^2_0 + \gamma^{-1} \Vert \divv \bm z \Vert^2_0  , \label{norm_W} \\
  \Vert \bm z\Vert^2_{\bm W^{-}} & := R^{-1} \Vert \bm z \Vert^2_0  , \label{norm_W_minus} \\
\Vert q\Vert^2_{P} &:=  \gamma \Vert q \Vert^2_0, \label{norm_P}
\end{align}
\end{subequations}
where the parameter $\gamma$ can be defined as
$\gamma := \lambda_0^{-1}+R+S \eqsim \max\{\lambda_0^{-1},R,S\}$, with
$\lambda_0=\max\{1,\lambda\}\eqsim 1+\lambda$, or exactly as
in~\cite{HongKraus2018} where $\gamma$ has been been defined as
$ \gamma:=\max\{(\min\{\lambda,R^{-1}\})^{-1},S\}$. Due to the
non-conformity of the DG discretization, the norm for the discrete
displacement space $\bm U_h$ is based on the standard DG norm
\begin{equation}\label{norm_DG}
\Vert \bm v_h \Vert^2_{\text{DG}} :=  \sum_{T\in \mathcal{T}_h}\Vert \nabla \bm v_h \Vert_{0,T}^2 + \sum_{F\in \mathcal{F}_h}
h_F^{-1}\Vert [(\bm v_h)_t] \Vert_{0,F}^2
+  \sum_{T\in \mathcal{T}_h} h_T^2\vert  \bm v_h \vert_{2,T}^2
\end{equation}
and defined by
\begin{equation}\label{norm_Uh}
\Vert \bm v_h\Vert_{\bm U_h}^2: = \Vert \bm v_h \Vert^2_{\text{DG}} + \lambda \Vert \divv \bm v_h \Vert^2_0.
\end{equation}
Next, we introduce the hybridized discontinuous Galerkin (HDG) norm 
\begin{equation}\label{norm_HDG}
\Vert(\bm v_h, \hat{\bm v}_h) \Vert^2_{\text{HDG}} :=   \sum_{T\in \mathcal{T}_h} \left(\Vert \nabla \bm v_h \Vert_{0,T}^2 
+  h_T^{-1}\Vert (\hat{\bm v}_h - \bm v_h)_t \Vert_{0,\partial T}^2
+  h_T^2\vert  \bm v_h \vert_{2,T}^2
\right),
\end{equation}
based on which we can define a discrete norm on the extended displacement space $\overline{\bm U}_h$, i.e.,
\begin{equation}\label{norm_Uh_tilde}
\Vert (\bm v_h,\hat{\bm v}_h)\Vert_{\overline{\bm U}_h}^2 := \Vert (\bm v_h, \hat{\bm v}_h) \Vert^2_{\text{HDG}} 
+ \lambda \Vert \divv \bm v_h \Vert^2_0.
\end{equation}
Moreover, we define the following discrete norm on the extended pressure space $\overline{P}_h$
\begin{subequations} \label{hdg_p_norms}
\begin{align}
  \Vert (q_h, \hat{q}_h)\Vert^2_{\text{HDG}} &:= \sum_{T\in \mathcal{T}_h} \left(\Vert \nabla q_h \Vert_{0,T}^2 
+  h_T^{-1}\Vert \hat{ q}_h - q_h \Vert_{0,\partial T}^2
+  h_T^2\vert  q_h \vert_{2,T}^2\right)
%+ \Vert q_h \Vert^2_0
,\label{hdg_p_norm}\\
\Vert (q_h, \hat{q}_h)\Vert^2_{\overline{P}_h} &:=  R
% \sum_{T\in \mathcal{T}_h} \left(\Vert \nabla q_h \Vert_{0,T}^2 
%+  h_T^{-1}\Vert \hat{ q}_h - q_h \Vert_{0,\partial T}^2
%+  h_T^2\vert  q_h \vert_{2,T}^2\right)+ \gamma \Vert q_h \Vert^2_0.
 \Vert (q_h, \hat{q}_h)\Vert^2_{\text{HDG}} + \gamma  \Vert q_h \Vert^2_0,
\label{hdg_p_norm_scaled}
\end{align}
\end{subequations}
where 
\begin{equation}\label{gamma}
\gamma = S+\frac{1}{\lambda}\simeq \max\left\{S,\frac{1}{\lambda_0}\right\}.
\end{equation}
Finally, we consider the following two product spaces
\begin{subequations}\label{product_spaces}
\begin{eqnarray}
\overline{\bm X}_h &:=& \overline{\bm U}_h \times {\bm W_h}\times P_h \label{product_spaces_a}, \\
\overline{\overline{\bm X}}_h & := & \overline{\bm U}_h \times {\bm W_h^{-}}\times \overline{P}_h  \label{product_spaces_b}
\end{eqnarray}
\end{subequations}
equipped with the norms
\begin{subequations}\label{product_norms}
\begin{align}
\vertiii{((\bm v_h,\hat{\bm v}_h),\bm z_h,q_h)}^2_{\overline{\bm X}_h} &:=
 \Vert (\bm v_h,\hat{\bm v}_h) \Vert^2 _{\overline{\bm U}_h} + \Vert z_h\Vert^2_{\bm W} 
+\Vert q_h\Vert^2_P \label{product_norms_a}, \\
\vertiii{((\bm v_h,\hat{\bm v}_h),\bm z_h,(q_h,\hat{q}_h))}^2_{\overline{\overline{\bm X}}_h} &:=
 \Vert (\bm v_h,\hat{\bm v}_h) \Vert^2 _{\overline{\bm U}_h} + \Vert z_h\Vert^2_{\bm W^{-}} 
+\Vert (q_h,\hat{q}_h)\Vert^2_{\overline{P}_h}  \label{product_norms_b}
\end{align}
\end{subequations}
in the context of problems~\eqref{Cons-disc-3-field-HDG} and~\eqref{Cons-disc-3-field-HDG-HM}, respectively.

\subsubsection{Uniform well-posedness of the time-discrete problem}

The well-posedness of the three-field formulation \eqref{eqn:weak-3f}
on the continuous and discrete levels has been addressed and answered
in
\cite{Zenisek1984existence,Zenisek1984finite,Showalter2000diffusion,Hairer1989numerical}
using semi-group theory and Galerkin discretization methods.  After
time discretization by an implicit or semi-implicit time integration
scheme, the continuous three-field formulation results in a
variational problem of the form: Find $\bx\in \bm X$ such that
\begin{equation}\label{var_2_3}
\mathcal{A}(\bx,\by)=\mathcal{F}(\by), \qquad \forall \by \in \bm X := \bm U\times \bm W \times P,
\end{equation}
%where $\bm X = \bm U\times \bm W\times $ is a given Hilbert space, i.e., 
%$$\bm X = \begin{cases}\bm U \times P \qquad \quad\,\,\, \text{ for the two-field formulations} \\ 
%\bm U\times \bm W\times P \quad \text{ for the 3-field formulations},
%\end{cases}$$
where
$
  \mathcal{A}(\bx,\by)
  := 
  a(\bu,\bv)- 
  (p, \divv \bv) +
  (R^{-1}\bw,\bz) - (p,\divv \bz) 
  -\ (\divv \bu,q) - (\divv\bw,q) 
  -(S p,q) 
$
and $\mathcal{F}(\cdot) \in \bm X'$ denotes a corresponding linear
form, which depends on the time integrator.

As it is well known, the abstract variational problem~\eqref{var_2_3} is well-posed
under the following necessary and sufficient conditions, see~\cite{Babuska1971error}. 

\begin{theorem}\label{thm:1}
Assume that 
$\mathcal{F}\in \bm X'$ and the bilinear form $\mathcal{A}(\cdot,\cdot)$ in~\eqref{var_2_3} satisfies the following conditions:
\begin{itemize}
\item $\mathcal{A}(\cdot,\cdot)$ is bounded, i.e., there exists a constant $C>0$ such that 
\begin{equation}\label{boundedness_A}
\mathcal{A}(\bx,\by) \le C \vertiii{\bx}_{\bm X} \vertiii{\by}_{\bm X} \qquad \forall \bx,\by \in \bm X;
\end{equation}
\item There exists a constant $\beta>0$ such that 
\begin{equation}\label{inf_sup_A}
\inf_{\bx \in \bm X} \sup_{\by \in \bm X} \frac{\mathcal{A}(\bx,\by)}{\vertiii{\bx}_{\bm X} \vertiii{\by}_{\bm X}} \ge \beta>0.
\end{equation}
\end{itemize}
Then there exists a unique solution $\bx^* \in \bm X$ of the variational problem~\eqref{var_2_3}. Further, the solution 
satisfies the stability estimate 
$$
\vertiii{\bx^*}_{\bm X} \le \sup_{\by \in \bm X}\frac{\mathcal{F}(\by)}{\vertiii{\by}_{\bm X}}
=:\Vert \mathcal{F}\Vert_{\bm X'}.
$$
\end{theorem}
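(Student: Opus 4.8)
The plan is to recast the abstract problem~\eqref{var_2_3} as an operator equation and to run the classical Banach--Ne\v{c}as--Babu\v{s}ka argument. Introduce the bounded linear operator $\mathcal{B}\colon \bm X\to\bm X'$ defined by $\langle\mathcal{B}\bx,\by\rangle := \mathcal{A}(\bx,\by)$. The boundedness hypothesis~\eqref{boundedness_A} gives $\Vert\mathcal{B}\bx\Vert_{\bm X'}\le C\vertiii{\bx}_{\bm X}$, so $\mathcal{B}$ is indeed well defined and continuous, while the inf-sup condition~\eqref{inf_sup_A} is precisely the a priori bound $\Vert\mathcal{B}\bx\Vert_{\bm X'}\ge\beta\,\vertiii{\bx}_{\bm X}$ for all $\bx\in\bm X$. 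Thus solving~\eqref{var_2_3} for arbitrary data $\mathcal{F}\in\bm X'$ is equivalent to showing that $\mathcal{B}$ is a bijection from $\bm X$ onto $\bm X'$.

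First I would establish that $\mathcal{B}$ is injective with closed range. Injectivity is immediate from the lower bound. For closedness of the range, if $\mathcal{B}\bx_n\to\bm g$ in $\bm X'$, then $\Vert\mathcal{B}(\bx_n-\bx_m)\Vert_{\bm X'}\ge\beta\,\vertiii{\bx_n-\bx_m}_{\bm X}$ shows that $(\bx_n)$ is Cauchy in the Hilbert space $\bm X$; its limit $\bx$ satisfies $\mathcal{B}\bx=\bm g$ by continuity, so $\operatorname{ran}\mathcal{B}$ is a closed subspace of $\bm X'$. The remaining, and only genuinely non-automatic, step is surjectivity. Since $\bm X$ is Hilbert (hence reflexive) and $\operatorname{ran}\mathcal{B}$ is closed, the Hahn--Banach theorem reduces $\operatorname{ran}\mathcal{B}=\bm X'$ to triviality of its annihilator, i.e. to the implication: if $\by\in\bm X$ satisfies $\mathcal{A}(\bx,\by)=0$ for all $\bx\in\bm X$, then $\by=\bm 0$. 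Here I would exploit that the bilinear form $\mathcal{A}(\cdot,\cdot)$ of~\eqref{var_2_3} is \emph{symmetric} --- the elasticity form $a(\cdot,\cdot)$, the weighted mass term $(R^{-1}\cdot,\cdot)$, the two divergence coupling terms, and the storage term $(S\,\cdot,\cdot)$ are each symmetric --- so that $\mathcal{A}(\bx,\by)=0$ for all $\bx$ is the same as $\sup_{\bx\in\bm X}\mathcal{A}(\by,\bx)/\vertiii{\bx}_{\bm X}=0$, which by~\eqref{inf_sup_A} forces $\by=\bm 0$. Consequently $\mathcal{B}$ is bijective, and by the bounded inverse theorem $\mathcal{B}^{-1}$ is bounded; the unique solution of~\eqref{var_2_3} is $\bx^*=\mathcal{B}^{-1}\mathcal{F}$.

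Finally, the stability bound is read off directly: applying~\eqref{inf_sup_A} to $\bx=\bx^*$ and inserting the variational identity $\mathcal{A}(\bx^*,\by)=\mathcal{F}(\by)$ gives
$$
\vertiii{\bx^*}_{\bm X}\ \le\ \frac1\beta\,\sup_{\by\in\bm X}\frac{\mathcal{A}(\bx^*,\by)}{\vertiii{\by}_{\bm X}}\ =\ \frac1\beta\,\sup_{\by\in\bm X}\frac{\mathcal{F}(\by)}{\vertiii{\by}_{\bm X}}\ =\ \frac1\beta\,\Vert\mathcal{F}\Vert_{\bm X'},
$$
i.e. the stated estimate with stability constant $\beta^{-1}$; uniqueness can also be seen here, since two solutions $\bx_1,\bx_2$ would satisfy $\mathcal{A}(\bx_1-\bx_2,\by)=0$ for all $\by$ and hence $\vertiii{\bx_1-\bx_2}_{\bm X}=0$ by the same inequality. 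I expect the surjectivity step to be the only point requiring care: condition~\eqref{inf_sup_A} controls $\bx$ from below but says nothing a priori about $\by$, and bridging this gap needs either the symmetry of $\mathcal{A}$ used above or, equivalently for this self-adjoint saddle-point structure, the companion transposed inf-sup condition $\inf_{\by\in\bm X}\sup_{\bx\in\bm X}\mathcal{A}(\bx,\by)/(\vertiii{\bx}_{\bm X}\vertiii{\by}_{\bm X})\ge\beta$, which here coincides with~\eqref{inf_sup_A}. Everything else is standard Banach-space functional analysis, so in the write-up this theorem may simply be attributed to~\cite{Babuska1971error} with the above outline.
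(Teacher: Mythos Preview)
The paper does not actually prove Theorem~\ref{thm:1}; it states the result and attributes it to~\cite{Babuska1971error} with the phrase ``As it is well known\ldots''. Your proposal is therefore more detailed than what the paper provides, and indeed you anticipate this at the end of your sketch. The Banach--Ne\v{c}as--Babu\v{s}ka argument you outline is correct: the inf-sup bound gives injectivity and closed range of the induced operator, and you fill the surjectivity gap by invoking the symmetry of the specific bilinear form $\mathcal{A}(\cdot,\cdot)$ from~\eqref{var_2_3}, which is a legitimate observation here (each constituent term is symmetric, as you note). Two small remarks: first, your stability estimate correctly carries the factor $\beta^{-1}$, whereas the displayed bound in the paper's statement omits it---this appears to be a typo in the paper, and your version is the right one. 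Second, be aware that the symbol $\mathcal{B}$ is later reserved in the paper (Section~\ref{sec::uniformprecond}) for the preconditioner, so in a write-up you may want a different letter for the operator associated with $\mathcal{A}$.
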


Besides for the establishment of well-posedness on the continuous and discrete levels, 
boundedness, i.e., property~\eqref{boundedness_A}, and inf-sup stability, i.e., property~\eqref{inf_sup_A}, 
is crucial in the error analysis and for the construction of preconditioners and iterative solution
methods for the algebraic problems arising from the discretization
of~\eqref{var_2_3}. 
Furthermore, aiming at parameter-independent error, or
near-best approximation estimates and parameter-robust
preconditioners, it is essential that the constants $C$ and $\beta$
in~\eqref{boundedness_A} and \eqref{inf_sup_A} are independent of any
physical (model) and discretization parameters. 
%This, however, is only
%possible when using a proper parameter-dependent norm, which, upon
%allowing the estimates \eqref{boundedness_A} and \eqref{inf_sup_A} to
%be parameter-independent, can be considered as a 'natural' norm for
%measuring the discretization error.
\begin{definition}
We call problem~\eqref{var_2_3} uniformly well-posed on its parameter space (or, in short, uniformly well-posed) 
under the norm $\vertiii{\cdot}_{\bm X}$
if the conditions of Theorem~\ref{thm:1} are satisfied and the constants $C$ and $\beta$ in~\eqref{boundedness_A} 
and \eqref{inf_sup_A} do not depend on any of the problem parameters. 
\end{definition}
\begin{remark}
The parameter space is the space of all problem parameters, i.e., physical parameters of the 
continuous mathematical model 
but also discretization parameters when $\mathcal{A}(\cdot,\cdot)$ represents a semi- or fully discrete problem.
\end{remark}
Uniform well-posedness of the time-discrete problem resulting from the three-field formulation of Biot's consolidation model 
has first been proven in~\cite{HongKraus2018} 
%
%For the time-discrete problem resulting from the three-field formulation, this problem was first solved in~\cite{HongKraus2018} 
using the norm 
\begin{equation}\label{norms_HK}
\vertiii{(\bv,\bz,q)}_{\bm X}^2 := \Vert \bv\Vert_{\bm U}^2 + \Vert \bz\Vert_{\bm W}^2 + \Vert q\Vert_P^2
\end{equation}
where $\Vert \cdot \Vert_{\bm U}$, $\Vert \cdot \Vert_{\bm W}$,
$\Vert \cdot\Vert_P$ are defined in~\eqref{norms_continuous_spaces}.
In the remainder of Subsection~\ref{sec::wellposed}, we extend the uniform
well-posedness analysis from~\cite{HongKraus2018,Hong2019conservativeMPET} to the three-field
formulations~\eqref{Cons-disc-3-field-HDG} and~\eqref{Cons-disc-3-field-HDG-HM}.

\subsubsection{Hybridized DG method}

Following the approach presented in~~\cite{HongKraus2018}, we will
show that problem~\eqref{Cons-disc-3-field-HDG} is uniformly
well-posed.  Initially, we rewrite \eqref{Cons-disc-3-field-HDG} in
the form: Find
$\bar{\bx}_h:=((\bu_h,\hat{\bu}_h),\bw_h,p_h)\in \overline{\bm U}_h
\times \bm W_h \times P_h =:\overline{\bm X}_h$, such that
%for any
%$((\bv_h,\hat{\bv}_h),\bz_h,q_h)=:\bar{\by}_h\in \overline{\bm X}_h$
%it holds that
\begin{equation}\label{HDG_gen}
\overline{\mathcal{A}}_h(\bar{\bx}_h,\bar{\by}_h)=\overline{\mathcal{F}}_h(\bar{\by}_h),\qquad \forall \bar{\by}_h\in \overline{\bm X}_h,
\end{equation}
where with $ \bar{\by}_h :=((\bv_h,\hat{\bv}_h),\bz_h,q_h)$ we have
\begin{subequations}\label{HDG_both}
\begin{align}
\overline{\mathcal{A}}_h(\bar{\bx}_h,\bar{\by}_h) : = & a_h((\bu_h,\hat{\bu}_h),(\bv_h,\hat{\bv}_h))-(p_h,\divv \bv_h)+(R^{-1}\bw_h,\bz_h) \nonumber \\
& -(p_h,\divv \bz_h) - (\divv \bu_h,q_h)-(\divv \bw_h,q_h) - (S p_h,q_h),  \label{HDG_left} \\
\overline{\mathcal{F}}_h(\bar{\by}_h) := &(\bm f,\bv_h)+(g,q_h), \label{HDG_right}
\end{align}
\end{subequations}
and $a_h((\cdot,\cdot),(\cdot,\cdot))$ is defined in~\eqref{a_h}.
Next, we recall two auxiliary results crucial for establishing the
main result of this subsection.
\begin{lemma}\label{lemma_inf_sup}
The following discrete inf-sup condition
\begin{align}
\inf_{q_h \in P_h}\sup_{(\bv_h,\hat{\bv}_h)\in \overline{\bm U}_h}
\frac{(\emph{div} \bv_h,q_h)}{\Vert (\bv_h,\hat{\bv}_h)\Vert_{\emph{HDG}}\Vert q_h\Vert_0} 
\ge \bar{\beta}_{S,d}>0,
  \label{hdg_inf_sup_stokes}
%\inf_{q_h \in P_h}\sup_{\bz_h \in \bm V_h}
%\frac{(\emph{div} \bz_h,q_h)}{\Vert \bz_h\Vert_{\emph{div}}\Vert q_h\Vert_0} 
%\ge \bar{\beta}_{D,d}>0
%\label{hdg_inf_sup_darcy} 
\end{align}
%where $\Vert \cdot \Vert_{\emph{div}}$ denotes the $H(\emph{div})$-norm defined by 
%\begin{equation}\label{hdiv_norm}
%\Vert \bz_h\Vert_{\emph{div}}^2 = \Vert \emph{div} \bz_h \Vert^2_0 +\Vert \bz_h\Vert_0^2
%\end{equation}
%and
holds where $\Vert \cdot \Vert_{\emph{HDG}}$ is the HDG norm defined in~\eqref{norm_HDG}.
\end{lemma}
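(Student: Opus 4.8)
The plan is to establish \eqref{hdg_inf_sup_stokes} by a Fortin‑type argument that reduces it to the classical continuous LBB stability of the Stokes problem on $\bm H^1_0(\Omega)\times L^2_0(\Omega)$. Fix $q_h\in P_h$; since $P_h\subset L^2_0(\Omega)$, the continuous inf‑sup condition provides a function $\bm v\in\bm H^1_0(\Omega)$ with $\divv\bm v=q_h$ and $\Vert\bm v\Vert_{1,\Omega}\le C_\Omega\Vert q_h\Vert_0$, where $C_\Omega$ depends only on $\Omega$ (hence on $d$). It then suffices to map $\bm v$ into $\overline{\bm U}_h$ in a divergence‑preserving and $\Vert\cdot\Vert_{\text{HDG}}$‑stable way and to test the supremum in \eqref{hdg_inf_sup_stokes} against this particular element.

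For the mapping I would use the canonical $\bm H(\divv)$‑conforming interpolation operator $\Pi^{\divv}\colon\bm H^1_0(\Omega)\to\bm U_h$ associated with the BDM/BDFM element, together with the tangential facet projection. The operator $\Pi^{\divv}$ enjoys the commuting property $\divv(\Pi^{\divv}\bm v)=\mathcal P_h(\divv\bm v)$, where $\mathcal P_h$ denotes the $L^2(\Omega)$‑orthogonal projection onto $P_h$; because $\divv\bm v=q_h\in P_h$ this gives $\divv(\Pi^{\divv}\bm v)=q_h$. Setting $\bm v_h:=\Pi^{\divv}\bm v\in\bm U_h$ and choosing $\hat{\bm v}_h$ to be the $\bm L^2(F)$‑orthogonal projection of the (single‑valued) tangential trace $\bm v_t$ onto $\bm P_\ell(F)$ on every facet $F$, one has $(\bm v_h,\hat{\bm v}_h)\in\overline{\bm U}_h$ (the homogeneous boundary conditions are inherited since $\bm v\in\bm H^1_0(\Omega)$), and I claim $\Vert(\bm v_h,\hat{\bm v}_h)\Vert_{\text{HDG}}\le C\Vert\bm v\Vert_{1,\Omega}$. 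Here the first term of \eqref{norm_HDG} is controlled by the local $H^1$‑stability of $\Pi^{\divv}$ on element patches, the consistency term $h_T^{-1/2}\Vert(\hat{\bm v}_h-\bm v_h)_t\Vert_{0,\partial T}$ by a scaled trace inequality combined with the approximation estimates for $\Pi^{\divv}$ and for the facet projection of $\bm v_t$ (writing $\hat{\bm v}_h-(\bm v_h)_t$ as $(\hat{\bm v}_h-\bm v_t)+(\bm v_t-(\bm v_h)_t)$), and the term $h_T|\bm v_h|_{2,T}$ by an inverse estimate on the polynomial $\bm v_h$. These are exactly the interpolation and stability results for $\bm H(\divv)$‑conforming HDG spaces proved in \cite{LehrenfeldSchoeberl2016high,LedererEtAl2019hybrid} (cf.\ also \cite{HongKraus2018}), which I would invoke rather than re‑derive.

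Combining these ingredients, and using $\divv\bm v_h=q_h$, gives
\begin{equation*}
\sup_{(\bm w_h,\hat{\bm w}_h)\in\overline{\bm U}_h}\frac{(\divv\bm w_h,q_h)}{\Vert(\bm w_h,\hat{\bm w}_h)\Vert_{\text{HDG}}\,\Vert q_h\Vert_0}\ \ge\ \frac{(\divv\bm v_h,q_h)}{\Vert(\bm v_h,\hat{\bm v}_h)\Vert_{\text{HDG}}\,\Vert q_h\Vert_0}\ =\ \frac{\Vert q_h\Vert_0}{\Vert(\bm v_h,\hat{\bm v}_h)\Vert_{\text{HDG}}}\ \ge\ \frac{1}{C\,C_\Omega},
\end{equation*}
which is \eqref{hdg_inf_sup_stokes} with $\bar\beta_{S,d}=(C\,C_\Omega)^{-1}$, a constant depending only on the shape‑regularity of $\mathcal T_h$, the polynomial degree $\ell$, and the domain $\Omega$ — in particular independent of $h$, $\tau$, and the model parameters $\lambda,R,S$. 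I expect the only genuine difficulty to be the $\Vert\cdot\Vert_{\text{HDG}}$‑stability of $\Pi^{\divv}$, i.e.\ bounding the broken‑$H^1$ norm with tangential jump penalty (and the $h_T^2|\cdot|_{2,T}$ contribution) of $\Pi^{\divv}\bm v$ by $\Vert\bm v\Vert_{1,\Omega}$ uniformly in $h$; this rests on a careful patchwise argument for the BDM/BDFM Fortin operator, for which I would simply cite the HDG literature, everything else (trace and inverse inequalities, the choice of facet multiplier) being routine.
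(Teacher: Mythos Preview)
Your Fortin-type argument is correct and complete, but it takes a somewhat different route from the paper. The paper does not go back to the continuous LBB condition; instead it invokes the already-established \emph{discrete} DG inf-sup condition
\[
\inf_{q_h \in P_h}\sup_{\bv_h\in \bm U_h}
\frac{(\divv \bv_h,q_h)}{\Vert \bv_h\Vert_{\text{DG}}\Vert q_h\Vert_0}
\ge \beta_{S,d}>0
\]
from \cite{Hong2016arobust,Hong2016uniformly}, and then observes that for every $\bv_h\in\bm U_h$ one can choose a facet function $\hat{\bv}_h\in\widehat{\bU}_h$ (e.g.\ a suitable average of the two tangential traces) with $\Vert \bv_h\Vert_{\text{DG}} \ge C \Vert (\bv_h,\hat{\bv}_h)\Vert_{\text{HDG}}$, which immediately gives \eqref{hdg_inf_sup_stokes}. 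Your approach is essentially the Fortin argument that underlies the cited DG result, carried out directly in the HDG norm; the paper's version is more modular because it factors through the DG norm and thereby reuses existing literature, whereas yours is more self-contained but re-derives machinery (the $H^1$-type stability of the BDM/BDFM interpolant) that is already packaged inside the cited DG inf-sup proof. Both lead to the same constant up to the transfer factor $C$ between the DG and HDG norms.
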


\begin{proof}
As shown, for example, in~\cite{Hong2016arobust,Hong2016uniformly}, the following inf-sup condition holds true:
\begin{equation}\label{inf_sup_help}
\inf_{q_h \in P_h}\sup_{\bv_h\in \bm U_h}
\frac{(\divv \bv_h,q_h)}{\Vert \bv_h\Vert_{\text{DG}}\Vert q_h\Vert_0} 
\ge \beta_{S,d}>0.
\end{equation}
Moreover, for all $\bv_h\in \bm U_h$ there exists $\hat{\bv}_h \in \widehat{\bU}_h$ such that
$
\Vert \bv_h\Vert_{\text{DG}} \ge C \Vert (\bv_h,\hat{\bv}_h)\Vert_{\text{HDG}}
$
with a constant $C$ depending only on mesh regularity. Combining the latter estimate with \eqref{inf_sup_help} 
yields~\eqref{hdg_inf_sup_stokes}. 
%The Darcy inf-sup condition \eqref{hdg_inf_sup_darcy} is well known and a proof can be found, for example, in 
%\cite{Boffi2013mixed}.
\end{proof}

The proof of the following theorem also makes use of the boundedness and coercivity of the bilinear form 
$a_h^{\text{HDG}}((\cdot,\cdot),(\cdot,\cdot))$ on $\overline{\bU}_h$ defined in~\eqref{a_h_HDG}, i.e., 
\begin{equation}\label{a_h_HDG_cont}
\vert a_h^{\text{HDG}}((\bu_h,\hat{\bu}_h),(\bv_h,\hat{\bv}_h)) \vert \le C_a \Vert (\bu_h,\hat{\bu}_h) \Vert_{\text{HDG}}
\Vert (\bv_h,\hat{\bv}_h) \Vert_{\text{HDG}}
% \; \forall (\bu_h,\hat{\bu}_h), (\bv_h,\hat{\bv}_h)\in \overline{\bm U}_h.
\end{equation} 
for all $(\bu_h,\hat{\bu}_h), (\bv_h,\hat{\bv}_h)\in \overline{\bm U}_h$ and
\begin{equation}\label{a_h_HDG_coer}
 a_h^{\text{HDG}}((\bu_h,\hat{\bu}_h),(\bu_h,\hat{\bu}_h))  \ge C_c \Vert (\bu_h,\hat{\bu}_h) \Vert_{\text{HDG}}^2
 \quad \text{for all } (\bu_h,\hat{\bu}_h)\in \overline{\bm U}_h,
\end{equation} 
see e.g.~\cite{LehrenfeldSchoeberl2016high,LedererEtAl2018hybrid}.
%
%Then we have the following theorem: 
%Theorem~\ref{well-posed_hdg} holds true
\begin{theorem}\label{well-posed_hdg}
Problem~\eqref{HDG_gen}--\eqref{HDG_both} is uniformly well-posed under the norm $\vertiii{\cdot}_{\overline{\bm X}_h}$ defined 
in~\eqref{product_norms_a}, that is,
\begin{equation}\label{A_hdg_bounded}
\overline{\mathcal{A}}(\bar{\bx}_h,\bar{\by}_h) 
\le \overline{C} \vertiii{\bar{\bx}_h}_{\overline{\bm X}_h} \vertiii{\bar{\by}_h}_{\overline{\bm X}_h} 
\qquad \forall \bar{\bx}_h,\bar{\by}_h \in \overline{\bm X}_h,
\end{equation}
\begin{equation}\label{A_hdg_infsup}
\inf_{\bar{\bx}_h\in \overline{\bm X}_h} \sup_{\bar{\by}_h\in \overline{\bm X}_h}  
\frac{\overline{\mathcal{A}}(\bar{\bx}_h,\bar{\by}_h) }{\vertiii{\bar{\bx}_h}_{\overline{\bm X}_h} \vertiii{\bar{\by}_h}_{\overline{\bm X}_h} } 
\ge \bar{\beta} >0.
\end{equation}
%where the constants $\overline{C}$ and $ \bar{\beta}$ are independent
%of the parameters $\lambda$, $R$, $S$ and $\tau,h$.
\end{theorem}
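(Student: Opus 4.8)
The plan is to verify the two hypotheses of Theorem~\ref{thm:1} for $\overline{\mathcal{A}}_h$ on $\overline{\bm X}_h$ equipped with $\vertiii{\cdot}_{\overline{\bm X}_h}$, following the strategy of~\cite{HongKraus2018} but with the hybridized ingredients -- Lemma~\ref{lemma_inf_sup} and the bounds \eqref{a_h_HDG_cont}--\eqref{a_h_HDG_coer} -- in place of the plain DG ones.

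\emph{Boundedness.} This is routine. I would bound each of the seven terms in \eqref{HDG_left} by Cauchy--Schwarz, using \eqref{a_h_HDG_cont} for the $a_h^{\text{HDG}}$ part and $\lambda(\div\bu_h,\div\bv_h)\le(\lambda^{1/2}\|\div\bu_h\|_0)(\lambda^{1/2}\|\div\bv_h\|_0)$ for the grad--div part. The only thing to check is that the parameter weights recombine to a uniform constant; for this I would record the elementary facts $\|\div\bv_h\|_0\le\sqrt{d/\lambda_0}\,\Vert(\bv_h,\hat{\bv}_h)\Vert_{\overline{\bm U}_h}$ (from $\Vert(\bv_h,\hat{\bv}_h)\Vert_{\text{HDG}}^2\ge\|\nabla\bv_h\|_0^2$ together with $\lambda\|\div\bv_h\|_0^2\le\Vert(\bv_h,\hat{\bv}_h)\Vert_{\overline{\bm U}_h}^2$), $\|\div\bz_h\|_0\le\gamma^{1/2}\|\bz_h\|_{\bm W}$ by \eqref{norm_W}, and $\gamma\gtrsim\max\{\lambda_0^{-1},R,S\}$, $\gamma\lambda_0\ge1$. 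With these, every cross term is bounded by the product of the relevant component norms up to a constant depending only on $d$ and $C_a$, giving \eqref{A_hdg_bounded}.

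\emph{Inf-sup.} For a given $\bar{\bx}_h=((\bu_h,\hat{\bu}_h),\bw_h,p_h)$ I would construct a test function $\bar{\by}_h=\bar{\by}_h^{(0)}+\delta_U\bar{\by}_h^{(U)}+\delta_W\bar{\by}_h^{(W)}+\delta_P\bar{\by}_h^{(P)}$, where: (i) $\bar{\by}_h^{(0)}=((\bu_h,\hat{\bu}_h),\bw_h,-p_h)$, for which all off-diagonal terms cancel and $\overline{\mathcal{A}}_h(\bar{\bx}_h,\bar{\by}_h^{(0)})=a_h((\bu_h,\hat{\bu}_h),(\bu_h,\hat{\bu}_h))+R^{-1}\|\bw_h\|_0^2+S\|p_h\|_0^2\gtrsim\Vert(\bu_h,\hat{\bu}_h)\Vert_{\overline{\bm U}_h}^2+R^{-1}\|\bw_h\|_0^2+S\|p_h\|_0^2$ by \eqref{a_h_HDG_coer}; (ii) $\bar{\by}_h^{(U)}=((-\bv_h^{p},-\hat{\bv}_h^{p}),0,0)$ with $(\bv_h^{p},\hat{\bv}_h^{p})$ taken from the HDG Stokes inf-sup of Lemma~\ref{lemma_inf_sup}, normalised and then scaled by a factor $\eqsim\lambda_0^{-1}$, so that $(p_h,\div\bv_h^{p})\gtrsim\lambda_0^{-1}\|p_h\|_0^2$ while $\Vert(\bv_h^{p},\hat{\bv}_h^{p})\Vert_{\overline{\bm U}_h}^2\lesssim\lambda_0^{-1}\|p_h\|_0^2\le\|p_h\|_P^2$, which supplies the missing $\lambda_0^{-1}\|p_h\|_0^2$; (iii) $\bar{\by}_h^{(W)}=((0,0),-\bm z_h^{p},0)$ with $\bm z_h^{p}\in\bm W_h$ from the standard mixed (Darcy) inf-sup $\sup_{\bm z_h\in\bm W_h}(\div\bm z_h,p_h)/\|\bm z_h\|_{H(\div)}\gtrsim\|p_h\|_0$ -- available because $\div\bm W_h=P_h$ -- scaled by a factor $\eqsim R$, so that $(p_h,\div\bm z_h^{p})\gtrsim R\|p_h\|_0^2$ and $\|\bm z_h^{p}\|_{\bm W}^2\lesssim R\|p_h\|_0^2\le\|p_h\|_P^2$ (using $R\le\gamma$), supplying the $R\|p_h\|_0^2$ contribution; (iv) $\bar{\by}_h^{(P)}=((0,0),0,-\gamma^{-1}\div\bw_h)$, which is admissible since $\div\bw_h\in P_h$, and which after absorbing the cross terms $\gamma^{-1}(\div\bu_h,\div\bw_h)$ and $S\gamma^{-1}(p_h,\div\bw_h)$ supplies the missing $\gamma^{-1}\|\div\bw_h\|_0^2$ part of $\|\bw_h\|_{\bm W}^2$, the absorbed pieces being $\lesssim\Vert(\bu_h,\hat{\bu}_h)\Vert_{\overline{\bm U}_h}^2$ (via $\gamma\lambda_0\ge1$ and an inverse estimate) and $\lesssim S\|p_h\|_0^2$ (via $\gamma\ge S$), hence already controlled by $\bar{\by}_h^{(0)}$. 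Since $\bar{\by}_h^{(U)},\bar{\by}_h^{(W)},\bar{\by}_h^{(P)}$ excite disjoint blocks of $\overline{\mathcal{A}}_h(\bar{\bx}_h,\cdot)$, they do not interfere with each other and interact with $\bar{\by}_h^{(0)}$ only through terms that a (suitably weighted) Young inequality puts below a fraction of the $\bar{\by}_h^{(0)}$ contribution plus the newly produced positive term. Choosing $\delta_U,\delta_W,\delta_P$ small enough relative to the fixed constants $C_c,C_a,\bar{\beta}_{S,d}$ and the Darcy inf-sup constant then yields $\overline{\mathcal{A}}_h(\bar{\bx}_h,\bar{\by}_h)\gtrsim\vertiii{\bar{\bx}_h}_{\overline{\bm X}_h}^2$, and since $\vertiii{\bar{\by}_h^{(i)}}_{\overline{\bm X}_h}\lesssim\vertiii{\bar{\bx}_h}_{\overline{\bm X}_h}$ for each $i$ we also get $\vertiii{\bar{\by}_h}_{\overline{\bm X}_h}\lesssim\vertiii{\bar{\bx}_h}_{\overline{\bm X}_h}$, which together give \eqref{A_hdg_infsup}.

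\emph{Main obstacle.} The delicate point is the grad--div coupling in $a_h$: testing the elasticity equation with the Stokes-inf-sup function produces the term $\lambda(\div\bu_h,\div\bv_h^{p})$, and in order to retain the genuinely needed $\lambda_0^{-1}\|p_h\|_0^2$ one must scale $\bv_h^{p}$ by precisely $\eqsim\lambda_0^{-1}$ and split the resulting Young inequality with a $\lambda$-adapted weight, so that the penalty lands on $\lambda\|\div\bu_h\|_0^2$ (controlled by $\bar{\by}_h^{(0)}$) rather than on an uncontrollable $\lambda\|p_h\|_0^2$. The analogous weighting choices for $\bar{\by}_h^{(W)}$ and $\bar{\by}_h^{(P)}$ are easier. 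Everything else is bookkeeping of parameter-weighted Cauchy--Schwarz and Young estimates, none of which introduces a parameter-dependent constant, and the passage from the plain DG to the HDG setting is transparent once Lemma~\ref{lemma_inf_sup} and \eqref{a_h_HDG_cont}--\eqref{a_h_HDG_coer} are invoked in place of \eqref{inf_sup_help} and the DG coercivity used in~\cite{HongKraus2018}.
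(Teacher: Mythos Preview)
Your proposal is correct and follows essentially the same route as the paper, which for both parts simply invokes the Cauchy--Schwarz/continuity argument and then refers to Theorem~4.4 of~\cite{HongKraus2018}, replacing the DG ingredients by the HDG ones from Lemma~\ref{lemma_inf_sup} and \eqref{a_h_HDG_cont}--\eqref{a_h_HDG_coer}. Your four-term decomposition $\bar{\by}_h^{(0)}+\delta_U\bar{\by}_h^{(U)}+\delta_W\bar{\by}_h^{(W)}+\delta_P\bar{\by}_h^{(P)}$ is a slightly more modular rewriting of the single-$\delta$ construction used there (and paralleled in the paper's detailed proof of Theorem~\ref{well-posed_hdg-hm}); one minor point is that no inverse estimate is needed to absorb $\gamma^{-1}(\div\bu_h,\div\bw_h)$, only the elementary bound $\|\div\bu_h\|_0^2\le d\sum_T\|\nabla\bu_h\|_{0,T}^2$ together with $\gamma^{-1}\le\lambda_0$.
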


\begin{proof}

To show~\eqref{A_hdg_bounded} one uses Cauchy-Schwarz inequality, 
the continuity of the bilinear form $a_h((\cdot,\cdot),(\cdot,\cdot))$ in the norm $\Vert \cdot\Vert_{\overline{\bU}_h}$
on $\overline{\bm U}_h$, i.e.,
\begin{equation}\label{a_h_cont}
\vert a_h((\bu_h,\hat{\bu}_h),(\bv_h,\hat{\bv}_h)) \vert \le \overline{C}_a \Vert (\bu_h,\hat{\bu}_h) \Vert_{\overline{\bm U}_h}
\Vert (\bv_h,\hat{\bv}_h) \Vert_{\overline{\bm U}_h} \quad \forall (\bu_h,\hat{\bu}_h), (\bv_h,\hat{\bv}_h)\in \overline{\bm U}_h,
\end{equation} 
which follows from~\eqref{a_h_HDG_cont}
%, see e.g.~\cite{LehrenfeldSchoeberl2016high,LedererEtAl2018hybrid}, 
as well as the definitions of $a_h((\cdot,\cdot),(\cdot,\cdot))$, 
$\Vert \cdot \Vert_{\overline{\bm U}_h}$ and $\vertiii{\cdot}_{\overline{\bm X}_h}$, 
see~\eqref{a_h}, \eqref{norm_Uh_tilde} and \eqref{product_norms_a}, respectively.

The proof of~\eqref{A_hdg_infsup} follows exactly the lines of the proof of Theorem~4.4 in \cite{HongKraus2018} 
replacing the DG bilinear form~\eqref{a_h_DG} by the HDG bilinear form \eqref{a_h_HDG} and the DG norm 
\eqref{norm_DG} by the HDG norm \eqref{norm_HDG}.
\end{proof}
% Note that uniform well-posedness proven theorem \ref{well-posed_hdg}
% also means that the constants are independent not only of the
% physical parameters $\lambda$, $R$, $S$ but also of the
% discretization parameters $\tau$ and $h$.

\subsubsection{Hybridized DG/hybrid mixed method}

%Let us now turn to 
Consider the HDG/hybrid mixed method for the three-field formulation as stated 
in~\eqref{Cons-disc-3-field-HDG-HM}. To prove the uniform well-posedness of this fully discrete problem, 
as we did with \eqref{Cons-disc-3-field-HDG}, we rewrite \eqref{Cons-disc-3-field-HDG-HM} in the form: 
Find $\overline{\overline{\bx}}_h:=((\bu_h,\hat{\bu}_h),\bw_h,(p_h,\hat{p}_h))\in \overline{\bU}_h\times \bW_h\times \overline{P}_h 
=:\overline{\overline{\bm X}}_h$ such that 
%for any $((\bv_h,\hat{\bv}_h),\bz_h,(q_h,\hat{q}_h))=:\overline{\overline{\by}}_h\in \overline{\overline{\bm X}}_h$ it holds that 
\begin{equation}\label{HDG-HM_gen}
\overline{\overline{\mathcal{A}}}_h
(\overline{\overline{\bx}}_h,\overline{\overline{\by}}_h)
=\overline{\overline{\mathcal{F}}}_h(\overline{\overline{\by}}_h),\qquad \forall \overline{\overline{\by}}_h\in 
\overline{\overline{\bm X}}_h,
\end{equation}
where with
$\overline{\overline{\by}}_h :=
((\bv_h,\hat{\bv}_h),\bz_h,(q_h,\hat{q}_h))$ we have
\begin{subequations}\label{HDG-HM_both}
\begin{align}
\overline{\overline{\mathcal{A}}}_h
(\overline{\overline{\bx}}_h,\overline{\overline{\by}}_h) : = & a_h((\bu_h,\hat{\bu}_h),(\bv_h,\hat{\bv}_h))-(p_h,\divv \bv_h)+(R^{-1}\bw_h,\bz_h)  \nonumber \\
%&  \nonumber \\
& - b((p_h,\hat{p}_h),\bz_h)- (\divv \bu_h,q_h)- b((q_h,\hat{q}_h),\bw_h) - (S p_h,q_h),  \label{HDG-HM_left} \\
\overline{\overline{\mathcal{F}}}_h(\overline{\overline{\by}}_h) := &(\bm f,\bv_h)+(g,q_h), \label{HDG-HM_right}
\end{align}
\end{subequations}
and $a_h((\cdot,\cdot),(\cdot,\cdot))$ and $b((\cdot,\cdot),\cdot)$
are defined in~\eqref{a_h} and \eqref{form_b}, respectively.  Before proving the
main theorem, we need another auxiliary result given by the following
lemma.
\begin{lemma}\label{lemma_HDG-HM}
There holds the following discrete inf-sup condition
%\begin{subequations}
%\begin{align}
\begin{equation}\label{hdg-hm_inf_sup_darcy}
%\inf_{q_h \in P_h}\sup_{(\bv_h,\hat{\bv}_h) \in \overline{\bm U}_h}
%\frac{(\emph{div} \bv_h,q_h)}{\Vert (\bv_h,\hat{\bv}_h)\Vert_{\emph{HDG}}\Vert q_h\Vert_0} 
%\ge \overline{\overline{\beta}}_{S,d}>0,\label{hdg-hm_inf_sup_stokes}\\
\inf_{(q_h,\hat{q}_h) \in \overline{P}_h}\sup_{\bz_h \in\bm V_h}
\frac{b((q_h,\hat{q}_h),\bz_h)}{\Vert \bz_h\Vert_{0}\Vert (q_h, \hat q_h)\Vert_{\emph{HDG}}}
  \ge {\overline{\beta}}_{D,d}>0
 \end{equation} 
 %\end{align}
%\end{subequations}
where $\Vert(\cdot,\cdot) \Vert_{\emph{HDG}} $ is defined in~\eqref{hdg_p_norm}.
\end{lemma}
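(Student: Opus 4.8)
The plan is to reduce \eqref{hdg-hm_inf_sup_darcy} to a parameter-free estimate on a single (reference) element; throughout $\bm W(T)=\mathrm{RT}_{\ell-1}(T)$, $P(T)=\mathrm P_{\ell-1}(T)$, and all hidden constants are independent of the mesh and of the model parameters. Integrating by parts element-wise in \eqref{form_b},
\[
 b((q_h,\hat q_h),\bz_h)=\sum_{T\in\mathcal T_h}\Bigl(-\int_T\bz_h\cdot\nabla q_h\,\dx+\int_{\partial T}\bz_h\cdot\bn\,(q_h-\hat q_h)\,\ds\Bigr).
\]
Since $h_T^2|q_h|_{2,T}^2\lesssim\|\nabla q_h\|_{0,T}^2$ by an inverse inequality, the target norm satisfies $\|(q_h,\hat q_h)\|_{\mathrm{HDG}}^2\eqsim\sum_T\bigl(\|\nabla q_h\|_{0,T}^2+h_T^{-1}\|q_h-\hat q_h\|_{0,\partial T}^2\bigr)$, so it suffices to bound $b$ from below by the right-hand side of this equivalence.

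For the localisation, fix $(q_h,\hat q_h)\in\overline P_h$ and, on each $T$, let $\bz_h|_T\in\bm W(T)$ be the $L^2(T)$-Riesz representative of the functional $b_T((q_h,\hat q_h),\cdot):\bt\mapsto\int_T\div\bt\,q_h-\int_{\partial T}\bt\cdot\bn\,\hat q_h$. Then $\bz_h\in\bm W^-_h$, $b((q_h,\hat q_h),\bz_h)=\sum_T\|\bz_h\|_{0,T}^2=\|\bz_h\|_0^2$, and $\|\bz_h\|_{0,T}=\|b_T((q_h,\hat q_h),\cdot)\|_{\star,T}$, the norm of that functional on $(\bm W(T),\|\cdot\|_{0,T})$. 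Hence for each fixed $(q_h,\hat q_h)$ the supremum in \eqref{hdg-hm_inf_sup_darcy} is at least $\bigl(\sum_T\|b_T((q_h,\hat q_h),\cdot)\|_{\star,T}^2\bigr)^{1/2}/\|(q_h,\hat q_h)\|_{\mathrm{HDG}}$, and it remains to prove the element-wise bound $\|b_T((q_h,\hat q_h),\cdot)\|_{\star,T}^2\gtrsim\|\nabla q_h\|_{0,T}^2+h_T^{-1}\|q_h-\hat q_h\|_{0,\partial T}^2$.

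Mapping $T$ to the reference simplex $\hat T$ by an affine diffeomorphism together with the contravariant Piola transform leaves $b_T$ invariant, while $\|b_T(\cdot)\|_{\star,T}$, $\|\nabla q_h\|_{0,T}$ and $h_T^{-1/2}\|q_h-\hat q_h\|_{0,\partial T}$ all scale like $h_T^{d/2-1}$; thus it suffices to prove, on the fixed finite-dimensional space $\mathrm P_{\ell-1}(\hat T)\times\prod_{F}\mathrm P_{\ell-1}(F)$,
\[
 \sup_{\hat\bt\in\mathrm{RT}_{\ell-1}(\hat T)\setminus\{0\}}\frac{\int_{\hat T}\div\hat\bt\,\hat q-\int_{\partial\hat T}\hat\bt\cdot\hat\bn\,\hat{\hat q}}{\|\hat\bt\|_{0,\hat T}}\;\gtrsim\;\bigl(\|\nabla\hat q\|_{0,\hat T}^2+\|\hat q-\hat{\hat q}\|_{0,\partial\hat T}^2\bigr)^{1/2}.
\]
Both sides are seminorms on that space, so the inequality holds with a positive constant as soon as the left-hand side vanishes only where the right-hand side does, i.e.\ on $\{(\hat q,\hat{\hat q}):\hat q\ \text{constant},\ \hat{\hat q}=\hat q|_{\partial\hat T}\}$. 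Suppose the left-hand side vanishes. Testing with $\hat\bt\in\mathrm{RT}_{\ell-1}(\hat T)$ of vanishing normal trace — for which $\{\div\hat\bt\}$ is exactly the space of mean-value-free polynomials in $\mathrm P_{\ell-1}(\hat T)$, a standard property of the Raviart–Thomas element — gives $\int_{\hat T}r\,\hat q=0$ for every mean-free $r$, hence $\hat q\equiv c$; testing then with arbitrary $\hat\bt$ and using surjectivity of the normal trace $\mathrm{RT}_{\ell-1}(\hat T)\to\prod_F\mathrm P_{\ell-1}(F)$ yields $\int_{\partial\hat T}\mu\,(c-\hat{\hat q})=0$ for all $\mu$, hence $\hat{\hat q}\equiv c$ as well. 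Undoing the reductions gives \eqref{hdg-hm_inf_sup_darcy}.

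\textbf{Main obstacle.} The only nontrivial point is the local kernel characterisation on $\hat T$, which rests on two structural facts about $\mathrm{RT}_{\ell-1}$ (surjectivity of the divergence onto mean-free polynomials on its zero-normal-trace subspace, and surjectivity of the normal trace), combined with the careful bookkeeping of the $h_T$-powers in the Piola scaling — this is precisely what dictates the $h_T^{-1}$ weighting in \eqref{hdg_p_norm}. Everything else (the inverse and trace inequalities, equivalence of norms on a fixed finite-dimensional space, the Riesz-representation device) is routine; up to these scalings the statement is the classical inf-sup stability of the hybrid mixed, equivalently hybridized DG, discretization of the Poisson/Darcy problem, which could alternatively be quoted directly.
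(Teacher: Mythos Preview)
Your proof is correct and follows precisely the route the paper indicates: the paper's proof is only a one-line sketch (``use the degrees of freedom of the Raviart--Thomas space and standard scaling arguments''), and your element-wise Riesz localisation, Piola scaling to the reference simplex, and kernel characterisation via the RT divergence/normal-trace surjectivity are exactly a fleshed-out version of that sketch. There is nothing to correct; your argument is a complete realisation of what the paper leaves to references.
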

\begin{proof}
A direct proof of~\eqref{hdg-hm_inf_sup_darcy} can be readily constructed, 
%following
%, for example, 
%the steps 
similarly as for the inf-sup condition in~\cite{Lederer:2019b,Lederer:2019c}, 
using the definition of the degrees of freedom for the Raviart-Thomas space, see~\cite{Boffi2013mixed}, and standard scaling arguments.
%This inf-sup condition is discussed and used, %. We want to refer for example to the works 
%e.g. in~\cite{MR2051067, CockburnEtAl2009unified}. 
\end{proof}
Such inf-sup conditions with mesh-dependent norms are 
widely used in structural mechanics, see, e.g.,~\cite{veubeke}. 
%\jknote{analysis}
%\mlnote{analysis}
%\plnote{check for standard LBB proof/Lemma of $b((p_h \hat p_h), w_h)$ with DG like norm for $(p_h \hat p_h)$ and $L^2$ norm for $w_h$. In total we have three LBBs: Stokes, Darcy and $b((p_h \hat p_h), w_h)$. }
\begin{theorem}\label{well-posed_hdg-hm}
Problem~\eqref{HDG-HM_gen}--\eqref{HDG-HM_both} is 
uniformly well-posed under the norm $\vertiii{\cdot}_{\overline{\overline{\bm X}}_h}$ defined 
in~\eqref{product_norms_b}.
%with constants independent
%of the parameters $\lambda$, $R$, $S$ and $\tau,h$.
\end{theorem}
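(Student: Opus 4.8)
The plan is to verify the two hypotheses of Theorem~\ref{thm:1} for $\overline{\overline{\mathcal A}}_h$ on $\overline{\overline{\bm X}}_h$, following the blueprint of the proof of Theorem~\ref{well-posed_hdg} (i.e.\ of Theorem~4.4 in~\cite{HongKraus2018}) with three systematic replacements: the scalar pairing $(p_h,\divv\bz_h)$ becomes the distributional form $b((p_h,\hat p_h),\bz_h)$ from~\eqref{form_b}, the flow space $\bm W_h$ becomes its broken counterpart $\bm W_h^-$, and the flow pressure norm becomes $\Vert(\cdot,\cdot)\Vert_{\overline P_h}$ from~\eqref{hdg_p_norm_scaled}. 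For boundedness I would estimate the seven terms of~\eqref{HDG-HM_left} by Cauchy--Schwarz: the $a_h$-contribution via its continuity~\eqref{a_h_cont}; the couplings $(p_h,\divv\bv_h)$ and $(\divv\bu_h,q_h)$ using $\lambda^{-1/2}\gamma^{-1/2}\le 1$, which holds since $\gamma\ge 1/\lambda$ by~\eqref{gamma}; the term $(Sp_h,q_h)$ using $S\le\gamma$; and the two occurrences of $b$ using the boundedness companion of Lemma~\ref{lemma_HDG-HM}, namely $|b((q_h,\hat q_h),\bz_h)|\le C\Vert\bz_h\Vert_0\Vert(q_h,\hat q_h)\Vert_{\text{HDG}}$, which follows from element-wise integration by parts (writing $b((q_h,\hat q_h),\bz_h)=\sum_T(-\int_T\bz_h\cdot\nabla q_h+\int_{\partial T}\bz_h\cdot\bn(q_h-\hat q_h)\,\ds)$) and a discrete trace inequality, combined with $\Vert\bz_h\Vert_0=R^{1/2}\Vert\bz_h\Vert_{\bm W^-}$ and $R^{1/2}\Vert(q_h,\hat q_h)\Vert_{\text{HDG}}\le\Vert(q_h,\hat q_h)\Vert_{\overline P_h}$.

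For the inf-sup condition, given $\overline{\overline{\bx}}_h=((\bu_h,\hat\bu_h),\bw_h,(p_h,\hat p_h))$, I would build a Fortin-type test function $\overline{\overline{\by}}_h=\overline{\overline{\by}}_h^{(1)}+\overline{\overline{\by}}_h^{(2)}+\overline{\overline{\by}}_h^{(3)}$. The diagonal choice $\overline{\overline{\by}}_h^{(1)}:=((\bu_h,\hat\bu_h),\bw_h,-(p_h,\hat p_h))$ makes all indefinite cross terms cancel, so that coercivity~\eqref{a_h_HDG_coer} of $a_h^{\text{HDG}}$ together with $a_h=a_h^{\text{HDG}}+\lambda(\divv\cdot,\divv\cdot)$ yields $\overline{\overline{\mathcal A}}_h(\overline{\overline{\bx}}_h,\overline{\overline{\by}}_h^{(1)})\ge c_0(\Vert(\bu_h,\hat\bu_h)\Vert_{\overline{\bm U}_h}^2+R^{-1}\Vert\bw_h\Vert_0^2+S\Vert p_h\Vert_0^2)$. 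This already controls $\vertiii{\overline{\overline{\bx}}_h}_{\overline{\overline{\bm X}}_h}^2$ except for the pressure HDG-seminorm $R\Vert(p_h,\hat p_h)\Vert_{\text{HDG}}^2$ and the $\lambda_0^{-1}\Vert p_h\Vert_0^2$ part of $\gamma\Vert p_h\Vert_0^2$.

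These two missing pieces I would recover by perturbation. Using Lemma~\ref{lemma_HDG-HM}, I pick $\bz_h^\ast\in\bm W_h^-$ realizing the $b$-inf-sup, normalized so that $\Vert\bz_h^\ast\Vert_0=R\Vert(p_h,\hat p_h)\Vert_{\text{HDG}}$, and set $\overline{\overline{\by}}_h^{(2)}:=-\delta_2((\bm 0,\bm 0),\bz_h^\ast,(0,0))$; only the $(R^{-1}\bw_h,\cdot)$ and $b(\cdot,\cdot)$ terms survive, and after one Young inequality $\overline{\overline{\mathcal A}}_h(\overline{\overline{\bx}}_h,\overline{\overline{\by}}_h^{(2)})\ge\delta_2(\tfrac{\overline\beta_{D,d}}{2}R\Vert(p_h,\hat p_h)\Vert_{\text{HDG}}^2-\tfrac{1}{2\overline\beta_{D,d}}\Vert\bw_h\Vert_{\bm W^-}^2)$. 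Using the Stokes inf-sup Lemma~\ref{lemma_inf_sup}, I pick $(\bv_h^\ast,\hat\bv_h^\ast)\in\overline{\bm U}_h$ normalized so that $\Vert(\bv_h^\ast,\hat\bv_h^\ast)\Vert_{\text{HDG}}=\lambda_0^{-1}\Vert p_h\Vert_0$ (since $\lambda\Vert\divv\bv_h^\ast\Vert_0^2\lesssim\lambda\Vert(\bv_h^\ast,\hat\bv_h^\ast)\Vert_{\text{HDG}}^2$ this gives $\Vert(\bv_h^\ast,\hat\bv_h^\ast)\Vert_{\overline{\bm U}_h}\lesssim\lambda_0^{-1/2}\Vert p_h\Vert_0$), and set $\overline{\overline{\by}}_h^{(3)}:=-\delta_3((\bv_h^\ast,\hat\bv_h^\ast),\bm 0,(0,0))$; only the $a_h$ and $(p_h,\divv\cdot)$ terms survive, and via~\eqref{a_h_cont} and Young $\overline{\overline{\mathcal A}}_h(\overline{\overline{\bx}}_h,\overline{\overline{\by}}_h^{(3)})\ge\delta_3(\tfrac{c_1}{\lambda_0}\Vert p_h\Vert_0^2-\epsilon\Vert(\bu_h,\hat\bu_h)\Vert_{\overline{\bm U}_h}^2)$. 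Summing the three contributions and fixing $\delta_2,\delta_3,\epsilon$ small enough, the negative $\Vert\bw_h\Vert_{\bm W^-}^2$ and $\Vert(\bu_h,\hat\bu_h)\Vert_{\overline{\bm U}_h}^2$ terms are absorbed into the positive ones from $\overline{\overline{\by}}_h^{(1)}$; recalling $\gamma\simeq\max\{S,\lambda_0^{-1}\}$ from~\eqref{gamma} one obtains $\overline{\overline{\mathcal A}}_h(\overline{\overline{\bx}}_h,\overline{\overline{\by}}_h)\gtrsim\vertiii{\overline{\overline{\bx}}_h}_{\overline{\overline{\bm X}}_h}^2$, while the normalizations give $\vertiii{\overline{\overline{\by}}_h}_{\overline{\overline{\bm X}}_h}\lesssim\vertiii{\overline{\overline{\bx}}_h}_{\overline{\overline{\bm X}}_h}$, all constants free of $\lambda,R,S,h,\tau$; dividing yields the inf-sup estimate as in~\eqref{A_hdg_infsup}, and Theorem~\ref{thm:1} concludes.

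I expect the only genuinely delicate point to be the parameter-robust bookkeeping in the two perturbation steps: the exponents of $R$ and of $\lambda_0$ in the normalizations of $\bz_h^\ast$ and $(\bv_h^\ast,\hat\bv_h^\ast)$ have to be chosen precisely so that every cross term produced by $(R^{-1}\bw_h,\cdot)$, by $b(\cdot,\cdot)$ and by $a_h$ can be absorbed with constants independent of all model and discretization parameters. The auxiliary boundedness of $b$ is the one ingredient not already recorded in the excerpt, but it is an elementary consequence of element-wise integration by parts and discrete trace inequalities; everything else reduces to Cauchy--Schwarz, Young's inequality, and the two discrete inf-sup conditions of Lemmas~\ref{lemma_inf_sup} and~\ref{lemma_HDG-HM}.
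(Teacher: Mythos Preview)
Your proposal is correct and follows essentially the same route as the paper. The paper assembles the test function as a single linear combination $(\bv_h,\hat\bv_h)=\delta(\bu_h,\hat\bu_h)-\lambda_0^{-1/2}(\bu_{h,0},\hat\bu_{h,0})$, $\bz_h=\delta\bw_h+R\bw_{h,0}$, $(q_h,\hat q_h)=-\delta(p_h,\hat p_h)$, exploiting the exact surjectivity $\divv\bu_{h,0}=\lambda_0^{-1/2}p_h$ rather than only the Stokes inf-sup inequality, and it splits $a_h$ into its HDG and $\lambda$-divergence parts when estimating cross terms; but the ingredients (Lemmas~\ref{lemma_inf_sup} and~\ref{lemma_HDG-HM}, the boundedness~\eqref{eq::bcont} of $b$, continuity/coercivity of $a_h^{\text{HDG}}$, and Young's inequality with parameter-independent weights) and the resulting estimates are the same as yours.
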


\begin{proof}
We start with proving the boundedness of the bilinear form $\overline{\overline{\mathcal{A}}}_h(\cdot,\cdot)$, i.e.,
\begin{equation}\label{bound_A_barbar}
\overline{\overline{\mathcal{A}}}_h( \overline{\overline{\bx}}_h, \overline{\overline{\by}}_h) \le 
\overline{\overline{C}} \vertiii{\overline{\overline{\bx}}_h}_{\overline{\overline{\bm X}}_h} 
\vertiii{\overline{\overline{\by}}_h}_{\overline{\overline{\bm X}}_h} \qquad \forall \overline{\overline{\bx}}_h, 
\overline{\overline{\by}}_h\in \overline{\overline{\bm X}}_h.
\end{equation}

First we note that 
\begin{align}
  b((p_h,\hat{p}_h),\bw_h) &= \sum_{T \in \mathcal{T}_h} \int_T \div \bm w_h p_h \dx - \int_{\partial T} \bm w_h \cdot \bm n \hat p_h \ds \nonumber\\
                           &= \sum_{T \in \mathcal{T}_h} \int_T -\bm w_h \cdot \nabla p_h \dx - \int_{\partial T} \bm w_h \cdot \bm n (\hat p_h - p_h) \ds \nonumber\\
                           &= \sum_{T \in \mathcal{T}_h} \int_T -\bm w_h \cdot \nabla p_h \dx - \int_{\partial T} h \bm  w_h \cdot \bm n \frac{1}{h} (\hat p_h - p_h) \ds \nonumber\\
                           &\le \sqrt{ \Vert\bm w_h \Vert^2_0 + \sum_{T \in \mathcal{T}_h} h \Vert\bm w_h \cdot n\Vert_{\partial T} } \Vert (p_h, \hat p_h) \Vert_{\text{HDG}} \nonumber\\
  &\le C_b\Vert \bm w_h \Vert_0 \Vert (p_h, \hat p_h) \Vert_{\text{HDG}}, \label{eq::bcont}
\end{align}
where we have used standard scaling arguments in the last step of~\eqref{eq::bcont}, i.e. the constant $C_b$ depends only on the mesh 
regularity.

Further, using the continuity of the bilinear form $a_h((\cdot,\cdot),(\cdot,\cdot))$ on $\overline{\bm U}_h$, 
i.e.~\eqref{a_h_cont}, the definitions of the norms $\Vert \cdot\Vert_{\overline{\bU}_h}$, $\Vert \cdot \Vert_W$, 
$\Vert \cdot\Vert_{\overline{P}_h}$, and $\vertiii{\cdot}_{\overline{\overline{\bm X}}_h}$, 
see \eqref{norm_Uh_tilde}, \eqref{norm_W}, \eqref{hdg_p_norm_scaled} and \eqref{product_norms_b}, 
respectively, and applying the Cauchy-Schwarz inequality and also estimate~\eqref{eq::bcont}, one gets
\begin{align}
\overline{\overline{\mathcal{A}}}_h
(\overline{\overline{\bx}}_h,\overline{\overline{\by}}_h) : = 
     & a_h((\bu_h,\hat{\bu}_h),(\bv_h,\hat{\bv}_h))-(p_h,\divv \bv_h)+(R^{-1}\bw_h,\bz_h) \nonumber \\
%&  \nonumber \\
&  - b((p_h,\hat{p}_h),\bz_h)- (\divv \bu_h,q_h)- b((q_h,\hat{q}_h),\bw_h) - (S p_h,q_h)  \nonumber \\
\le & C_a \Vert (\bu_h,\hat{\bu}_h)\Vert_{\overline{\bU}_h} \Vert (\bv_h,\hat{\bv}_h)\Vert_{\overline{\bU}_h} 
+\lambda^{-1/2}\Vert p_h\Vert_0 \lambda^{1/2} \Vert \divv \bv_h \Vert_0  \nonumber \\
& + R^{-1/2} \Vert \bw_h \Vert_0 R^{-1/2} \Vert \bz_h\Vert_0 
+ C_bR^{1/2}   \Vert (p_h,\hat{p}_h) \Vert_{\text{HDG}} 
R^{-1/2}\Vert  \bz_h \Vert_0
 \nonumber \\
& + \lambda^{1/2}\Vert \divv \bu_h\Vert_0 \lambda^{-1/2} \Vert q_h \Vert_0 
+ C_bR^{1/2}  \Vert (q_h,\hat{q}_h) \Vert_{\text{HDG}}
R^{-1/2}\Vert  \bw_h \Vert_0  \nonumber \\
& + S^{1/2} \Vert p_h\Vert_0 S^{1/2} \Vert q_h \Vert_0  \nonumber \\
\le & C_a \Vert (\bu_h, \hat{\bu}_h)\Vert_{\overline{\bU}_h}\Vert (\bv_h,\hat{\bv}_h\Vert_{\overline{\bU}_h} 
+ \Vert (p_h,\hat{p}_h) \Vert_{\overline{P}_h} \Vert (\bv_h, \hat{\bv}_h)\Vert_{\overline{\bU}_h}  \nonumber \\
& + \Vert \bw_h \Vert_{\bW^-} \Vert \bz_h \Vert_{\bW^-} + C_b\Vert (p_h, \hat{p}_h)\Vert_{\overline{P}_h}\Vert \bz_h\Vert_{\bW^-} 
 \nonumber \\
& + \Vert (\bu_h, \hat{\bu}_h)\Vert_{\overline{\bU}_h}  \Vert (q_h,\hat{q}_h) \Vert_{\overline{P}_h} 
+  C_b\Vert (q_h,\hat{q}_h)\Vert_{\overline{P}_h}\Vert \bw_h\Vert_{\bW^-}  \nonumber \\
& +  \Vert (p_h,\hat{p}_h) \Vert_{\overline{P}_h}  \Vert (q_h,\hat{q}_h) \Vert_{\overline{P}_h}  \nonumber \\
\le & \overline{\overline{C}} \left( \Vert (\bu_h,\hat{\bu}_h) \Vert_{\overline{\bU}_h} 
+\Vert \bw_h\Vert_{\bW^-} + \Vert (p_h,\hat{p}_h)\Vert_{\overline{P}_h} \right)  \nonumber \\
& \times
\left( \Vert (\bv_h,\hat{\bv}_h) \Vert_{\overline{\bU}_h} 
+\Vert \bz_h\Vert_{\bW^-} + \Vert (q_h,\hat{q}_h)\Vert_{\overline{P}_h} \right).
\end{align}

Next we prove the inf-sup condition 
\begin{equation}\label{A_hdg_mixed_infsup}
\inf_{\overline{\overline{\bx}}_h\in \overline{\overline{\bm X}}_h} \sup_{\overline{\overline{\by}}_h\in 
\overline{\overline{\bm X}}_h}  
\frac{\overline{\overline{\mathcal{A}}}_h(\overline{\overline{\bx}}_h,
\overline{\overline{\by}}_h) }
{\vertiii{\overline{\overline{\bx}}_h}_{\overline{\overline{\bm X}}_h} 
\vertiii{\overline{\overline{\by}}_h}_{\overline{\overline{\bm X}}_h} } 
\ge \overline{\overline{\beta}} >0
\end{equation}
which immediately follows if for all $\oox_h\in \ooX_h$ we can find 
$\ooy_h = \ooy_h(\oox_h)$ such that
\begin{equation}\label{bound_est}
\vertiii{\ooy_h}_{\ooX_h} \le \overline{\overline{C}}_b \vertiii{\oox_h}_{\ooX_h} %\qquad \text{for all }\oox_h\in \ooX_h
\end{equation}
and the coercivity estimate 
\begin{equation}\label{coerc_est}
\overline{\overline{\mathcal{A}}_h}(\oox_h,\ooy_h) \ge \overline{\overline{C}}_c 
\vertiii{\oox_h}^2_{\ooX_h} %\qquad \text{for all }\oox_h\in \ooX_h
\end{equation}
are simultaneously satisfied with constants
$\overline{\overline{C}}_b$ and $\overline{\overline{C}}_c$
independent of all problem parameters.

Now let $\oox_h\in \ooX_h$ be arbitrary but fixed. Then we choose
$\ooy_h:=((\bv_h,\hv_h),\bz_h,(q_h,\hq_h))$ by setting
%
%We start with verifying the bound~\eqref{bound_est} which we rewrite in the form 
%\begin{equation}\label{bound_est1}\nonumber
%\vertiii{((\bv_h,\hat{\bv}_h),\bz_h,(q_h,\hat{q}_h))}_{\ooX} \le \overline{\overline{C}}_b 
%\vertiii{((\bu_h,\hat{\bu}_h),\bw_h,(p_h,\hat{p}_h))}_{\ooX}  \quad \forall ((\bu_h,\hat{\bu}_h),\bw_h,(p_h,\hat{p}_h))\in \ooX_h.
%\end{equation}
%
%...
%
%Choose
\begin{subequations}\label{choices}
\begin{align}
(\bv_h,\hv_h)&:=\delta(\bu_h,\hu_h)-\frac{1}{\sqrt{\lambda_0}}(\bu_{h,0},\hu_{h,0}), \label{test_f_v}  \\
\bz_h & := \delta \bw_h + R \bw_{h,0}, \label{test_f_z}\\
(q_h,\hq_h) & := -\delta (p_h,\hp_h), \label{test_f_q}
\end{align}
\end{subequations}
where 
$(\bu_{h,0},\hat{\bu}_{h,0})\in \overline{\bU}_h$ is such that 
\begin{subequations}\label{u_h0}
\begin{align}
\divv \bu_{h,0}&=\frac{1}{\sqrt{\lambda_0}}p_h, \label{u_h0_a}\\
\Vert (\bu_{h,0},\hat{\bu}_{h,0})\Vert_{\text{HDG}}& \le \bar{\beta}_{S,d}^{-1} \frac{1}{\sqrt{\lambda_0}} \Vert p_h\Vert_0
\label{u_h0_b}
\end{align}
\end{subequations}
and $\bw_{h,0}$ is such that 
\begin{subequations}\label{w_h0}
\begin{align}
-  b((p_h,\hat{p}_h),\bw_{h,0})&=  \Vert (p_h,\hat{p}_h)\Vert^2_{\text{HDG}}, \label{w_h0_a} \\
\Vert \bw_{h,0}\Vert_0 & \le \overline{\beta}^{-1}_{D,d} \Vert (p_h,\hat{p}_h) \Vert_{\text{HDG}} \label{w_h0_b}.
\end{align}
\end{subequations}
Note that the existence of $(\bu_{h,0},\hat{\bu}_{h,0})$ and
$\bw_{h,0}$ satisfying the estimates~\eqref{u_h0} and \eqref{w_h0}
follows from the discrete inf-sup conditions~\eqref{hdg_inf_sup_stokes} and~\eqref{hdg-hm_inf_sup_darcy}. With
this particular choice, we first verify~\eqref{bound_est}. To begin with
\begin{subequations}
\allowdisplaybreaks
\begin{align}
\Vert \frac{1}{\sqrt{\lambda}_0} (\bu_{h,0},\hu_{h,0})\Vert^2_{\overline{\bU}_h}= &
\Vert \frac{1}{\sqrt{\lambda_0}}(\bu_{h,0},\hu_{h,0})\Vert^2_{\text{HDG}} 
+\lambda_0(\divv \left(\frac{1}{\sqrt{\lambda_0}}\bu_{h,0}\right),
\divv \left(\frac{1}{\sqrt{\lambda}_0}\bu_{h,0}\right)) \nonumber \\
\overset{\eqref{u_h0_b}}{\le} & \frac{1}{\lambda_0} 
\overline{\beta}_{S,d}^{-2}  \frac{1}{\lambda_0}  \Vert p_{h,0}\Vert_0^2 
+ \frac{1}{\lambda_0}  \Vert p_h\Vert^2_0 \nonumber 
\\
\le & \left( \frac{1}{\lambda_0} \overline{\beta}_{S,d}^{-2}+1 \right)
\gamma \Vert p_h\Vert^2_0 \nonumber \\
\le &  \left( \frac{1}{\lambda_0} \overline{\beta}_{S,d}^{-2}+1 \right) 
\Vert (p_h,\hp_h)\Vert^2_{\overline{P}_h}, \nonumber
\end{align}
\end{subequations}
from which we conclude
\begin{equation}\label{bound_a}
\Vert(\bv_h,\hv_h)\Vert_{\overline{\bU}_h} \le \delta 
\Vert(\bu_h,\hu_h)\Vert_{\overline{\bU}_h}+(\overline{\beta}_{S,d}^{-2}+1)^{\frac{1}{2}} 
\Vert (p_h,\hp_h)\Vert_{\overline{P}_h}.
\end{equation}
Next, 
%\begin{subequations}
\allowdisplaybreaks
\begin{align}
\Vert \bz_h \Vert^2_{{\bW}^{-}} \le & \delta \Vert \bw_h\Vert_{\bW^{-}}
+ R\Vert \bw_{h,0}\Vert_{\bW^{-}} \nonumber \\
\le & \delta \Vert \bw_h \Vert_{\bW^{-}}+\sqrt{R}\Vert \bw_{h,0}\Vert_0
\nonumber \\ 
\overset{\eqref{w_h0_b}}{\le} & \delta \Vert \bw_h \Vert_{\bW^{-}}
+\sqrt{R} \overline{\beta}_{D,d}^{-1} \Vert (p_h,\hp_h)\Vert_{HDG}
 \nonumber \\ \label{bound_b}
\le & 
\delta \Vert \bw_h \Vert_{\bW^{-}}
+ \overline{\beta}_{D,d}^{-1} \Vert (p_h,\hp_h)\Vert_{\overline{P}_h}. %\tag{4.41}
\end{align}
%\end{subequations}
Finally, 
\begin{equation}\label{bound_c}
\Vert (q_h,\hq_h)\Vert_{\overline{P}_h}\le \delta \Vert (p_h,\hp_h)\Vert_{\overline{P}_h}.
\end{equation}
The bounds \eqref{bound_a}, \eqref{bound_b} and \eqref{bound_c}
together imply \eqref{bound_est} with
$ \overline{\overline{C}}_b=[2(\delta^2+\overline{\beta}_{S,d}^{-2}
+\overline{\beta}_{D,d}^{-2}+1 )]^{\frac{1}{2}}$.

What remains is to verify~\eqref{coerc_est}:
%Then
\begin{subequations}
\allowdisplaybreaks
\begin{align}
\ooA_h((\bu_h,\hu_h),\bw_h,(p_h,\hp_h)) =& a_h^{\text{HDG}}((\bu_h,\hu_h),(\bv_h,\hv_h))+\lambda(\divv \bu_h,\divv \bv_h) 
\nonumber \\
& -(p_h,\divv \bv_h) +R^{-1}(\bw_h,\bz_h)-b((p_h,\hp_h),\bz_h)-(\divv \bu_h,q_h)\nonumber \\
&-b((q_h,\hq_h),\bw_h)-(Sp_h,q_h) \nonumber \\
%%
%= & a_h^{\text{HDG}}((\bu_h,\hu_h),
%\delta(\bu_h,\hu_h)-\frac{1}{\sqrt{\lambda_0}}(\bu_{h,0},\hu_{h,0})) \nonumber \\
%& +\lambda(\divv \bu_h,\delta\divv \bu_h-\frac{1}{\sqrt{\lambda_0}}\divv \bu_{h,0}) 
%\nonumber \\
%& -(p_h,\delta\divv \bu_h-\frac{1}{\sqrt{\lambda_0}}\divv \bu_{h,0}) 
%+R^{-1}(\bw_h,\delta \bw_h+R \bw_{h,0}) \nonumber \\
%& -\delta b((p_h,\hp_h), \bw_h) -R b((p_h,\hp_h),\bw_{h,0})
%+\delta (\divv \bu_h,p_h)\nonumber \\
%&+\delta b((p_h,\hp_h),\bw_h)+\delta (Sp_h,p_h) \nonumber \\
%%
= & \delta a_h^{\text{HDG}}((\bu_h,\hu_h),
(\bu_h,\hu_h))-\frac{1}{\sqrt{\lambda_0}} a_h^{HDG}
((\bu_h,\hu_h),(\bu_{h,0},\hu_{h,0})) \nonumber \\
& +\delta \lambda(\divv \bu_h,\divv \bu_h) 
-\frac{\lambda}{\sqrt{\lambda_0}}
(\divv \bu_h,\divv \bu_{h,0})  -\delta(p_h,\divv \bu_h)
\nonumber \\
%& 
%\nonumber
%\\
&+\frac{1}{\sqrt{\lambda_0}}(p_h,\divv \bu_{h,0})  +\delta R^{-1}(\bw_h, \bw_h) 
+(\bw_h,\bw_{h,0})
\nonumber \\
&  + R \Vert (p_h,\hp_h)\Vert_{\text{HDG}}^2
+\delta (\divv \bu_h,p_h) +\delta (Sp_h,p_h) \nonumber \\
\ge & \delta a_h^{\text{HDG}}((\bu_h,\hu_h),
(\bu_h,\hu_h))-\frac{1}{2}\frac{1}{\lambda_0} \varepsilon_1^{-1} a_h^{\text{HDG}}
((\bu_h,\hu_h),(\bu_{h},\hu_{h}))
 \nonumber \\
&
-\frac{1}{2} \varepsilon_1 a_h^{\text{HDG}}
((\bu_{h,0},\hu_{h,0}),(\bu_{h,0},\hu_{h,0}))+\delta \lambda(\divv \bu_h,\divv \bu_h) 
 \nonumber \\
& 
-\frac{1}{2}\varepsilon_2^{-1}\lambda
(\divv \bu_h,\divv \bu_{h}) 
-\frac{1}{2}\varepsilon_2 \frac{\lambda}{\lambda_0}
(\divv \bu_{h,0},\divv \bu_{h,0}) 
\nonumber \\
& 
+\frac{1}{\lambda_0}(p_h,p_h)  +\delta R^{-1}(\bw_h, \bw_h)-\frac{1}{2}\varepsilon_3^{-1}R^{-1}(\bw_h,\bw_{h})
\nonumber
\\
%& 
%\nonumber \\
& -\frac{1}{2}\varepsilon_3 R(\bw_{h,0},\bw_{h,0}) + R \Vert (p_h,\hp_h)\Vert_{\text{HDG}}^2
+\delta (Sp_h,p_h) \nonumber \\
\ge & \left(\delta-\frac{1}{2}\frac{1}{\lambda_0}\varepsilon_1^{-1}\right)
a_h^{\text{HDG}}((\bu_h,\hu_h),
(\bu_h,\hu_h))+\left(\delta - \frac{1}{2}\varepsilon_2^{-1}\right)\lambda(\divv \bu_h,\divv \bu_h) \nonumber \\
%& 
%\nonumber \\
& + \left(\delta S+\frac{1}{\lambda_0}-\frac{1}{2}\varepsilon_2 
\frac{\lambda}{\lambda_0^2}-\frac{1}{2}\frac{1}{\lambda_0}
\varepsilon_1 C_a \beta_{S,d}^{-2}\right) (p_h,p_h) \nonumber \\
%&  \nonumber 
%\\ 
&+ \left(\delta-\frac{1}{2} \varepsilon_3^{-1}\right)R^{-1}(\bw_h,\bw_h)+\left(1-\frac{1}{2}\varepsilon_3 \overline{\beta}_{D,d}^{-2}\right)
R \Vert (p_h,\hp_h)\Vert^2_{\text{HDG}}. \nonumber
\end{align}
\end{subequations}
By choosing $\varepsilon_1=\frac{1}{2}C_a^{-1}\beta_{S,d}^2$,
$\varepsilon_2 = \frac{1}{2}$, $\varepsilon_3=\beta_{D,d}^2$ the last
inequality becomes
\begin{subequations}
\allowdisplaybreaks
\begin{align}
\ooA_h((\bu_h,\hu_h),\bw_h,(p_h,\hp_h)) \ge& 
(\delta-C_a \beta_{S,d}^{-2})a_h^{\text{HDG}}((\bu_h,\hu_h),
(\bu_h,\hu_h))+ (\delta-1)\lambda (\divv \bu_h,\divv \bu_h) \nonumber \\
%&  \nonumber \\
& + \left( \delta S+\frac{1}{\lambda_0}-\frac{1}{4} \frac{1}{\lambda_0}
-\frac{1}{4} \frac{1}{\lambda_0}\right) (p_h,p_h) \nonumber \\
& + \left(\delta-\frac{1}{2}\right)R^{-1}(\bw_h,\bw_h) 
 +\left(1-\frac{1}{2}\right) R \Vert (p_h,\hp_h)\Vert^2_{\text{HDG}}.
\nonumber 
\end{align}
\end{subequations}
For $\delta\ge\max\left\{\frac{3}{2},\frac{1}{2 C_c}+C_a \overline{\beta}_{S,d}^{-2}\right\}$, we finally obtain
\begin{subequations}
\allowdisplaybreaks
\begin{align}
\ooA_h((\bu_h,\hu_h),\bw_h,(p_h,\hp_h)) \ge& 
\frac{1}{2}
\left(
\Vert(\bu_h,\hu_h)\Vert_{\text{HDG}}^2
+\lambda \Vert \divv \bu_h\Vert^2 
\right. 
\nonumber \\
&
 + \left(  S+\frac{1}{\lambda_0}\right) \Vert p_h\Vert_0^2
+ R\Vert (p_h,\hp_h)\Vert_{\text{HDG}}^2   + \left. R^{-1}\Vert \bw_h \Vert_0^2 \right)
 \nonumber \\
 = & \frac{1}{2}\left( \Vert (\bu_h,\hu_h)\Vert_{\overline{\bU}_h}^2 + \Vert (p_h,\hp_h)\Vert_{\overline{P}_h}^2 + \Vert \bw_h\Vert_{\bW^{-}}^2 \right),
\nonumber 
%\\
%\ge & \frac{1}{6}\left(  \Vert (\bu_h,\hu_h)\Vert_{\overline{\bU}_h} + \Vert (p_h,\hp_h)\Vert_{\overline{P}_h} + \Vert \bw_h\Vert_{\bW^{-}}  \right)^2,
%\nonumber
\end{align}
%where we have used that
utilizing $a_h^{\text{HDG}}((\bu_h,\hu_h))\ge C_c \Vert (\bu_h,\hu_h) \Vert_{\text{HDG}}^2$.
\end{subequations}
\end{proof}

\subsection{Uniform preconditioners}\label{sec::uniformprecond}
The results from the previous subsection imply a ``mapping property'' that is the basis for defining uniform
preconditioners. Here, we discuss norm-equivalent (block-diagonal) preconditioners which fall into this 
category. 
%and comment on Field-of-Values (FoV)-equivalent (block-triangular) preconditioners which fall into this category.
%
%\subsubsection{Well-posedness and preconditioning}

Consider a uniformly well-posed problem of the form~\eqref{var_2_3} where $\mathcal{A}: \bm X\rightarrow  {\bm X}^{\prime}$ 
is a linear operator, i.e.,
$\mathcal{A}\in \mathcal{L}({\bm X}, {\bm X}^{\prime})$, 
$\mathcal{F}\in {\bm X}^{\prime}$ for a given Hilbert space $\bm X$, e.g., $\bm X:=\bm U\times \bm W\times \bm P$ or 
$\bm X:=\overline{\bm X}_h=\overline{\bm U}_h\times \bm W_h\times P_h$, or 
$\bm X:=\ooX_h=\overline{\bm U}_h\times \bm W_h\times \overline{P}_h$. Here we assume that 
$\mathcal{A}$ and $\mathcal{F}$ are defined 
via the bilinear and linear forms $\mathcal{A}(\cdot,\cdot)$, $\mathcal{F}(\cdot)$, or 
$\overline{\mathcal{A}}_h(\cdot,\cdot)$, $\overline{\mathcal{F}}_h(\cdot)$, or 
$\overline{\overline{\mathcal{A}}}_h(\cdot,\cdot)$, $\overline{\overline{\mathcal{F}}}_h(\cdot)$, 
cf.~\eqref{var_2_3}, \eqref{HDG_both}, \eqref{HDG-HM_both}.
%Uniform well-posedness, according to Definition 4.1, means that the constants $C$ and $\beta$ 
%in~\eqref{boundedness_A} and \eqref{inf_sup_A} do not depend on any of the problem parameters. 
Let us write equation~\eqref{var_2_3} in operator form, i.e., 
\begin{equation}\label{op_2_3}
\mathcal{A}\bx = \mathcal{F}\in  {\bm X}^{\prime}
\end{equation} 
and define the linear operator $\mathcal{B}:   {\bm X}^{\prime} \rightarrow \bm X$, i.e., 
$\mathcal{B}\in \mathcal{L}( {\bm X}^{\prime}, \bm X)$ by 
\begin{equation}\label{def_B}
(\mathcal{B}\mathcal{G},\by)_{\bm X}=\langle \mathcal{G},\bm y\rangle, \qquad 
\forall \mathcal{G}\in  {\bm X}^{\prime}, \by\in \bm X, 
\end{equation}
where $(\cdot,\cdot)_{\bm X}$ is the inner product inducing the norm $\Vert \cdot \Vert_{\bm X}$, that is, 
$\Vert \by \Vert_{\bm X}= (\by,\by)_{\bm X}^{\frac{1}{2}}$, or, equivalently, $\mathcal{B}^{-1}:{\bm X}\rightarrow 
{\bm X}^{\prime}$, $\mathcal{B}^{-1}\in\mathcal{L}({\bm X},{\bm X}^{\prime})$ by 
\begin{equation}
\langle \mathcal{B}^{-1}\bx,\by\rangle = (\bx,\by)_{\bm X}, \qquad \forall \bx,\by\in \bm X,
\end{equation} 
which implies
\begin{equation}
\langle \mathcal{B}^{-1}\bx,\bx\rangle = (\bx,\bx)_{\bm X}=\Vert \bx\Vert_{\bm X}^2, \qquad \forall \bx\in \bm X.
\end{equation} 
In practice, the latter relation is often replaced by the weaker condition 
\begin{equation} \label{eq::defprecond}
\langle \mathcal{B}^{-1}\bx,\bx\rangle \eqsim \Vert \bx\Vert_{\bm X}^2,
\end{equation}
for which reason the preconditioner $\mathcal{B}$ is also referred to as a norm-equivalent preconditioner, 
cf.~\cite{mardal2011preconditioning}. The symbol "$\eqsim$" stands for a norm equivalence, uniform with respect 
to all problem parameters.

Since~\eqref{boundedness_A} and \eqref{inf_sup_A} are in the norm $\Vert \cdot\Vert_{\bm X}$, we conclude 
for the operators $\mathcal{B}\mathcal{A}\in \mathcal{L}(\bm X,\bm X)$ and $(\mathcal{B}\mathcal{A})^{-1}\in 
\mathcal{L}(\bm X,\bm X)$ the following bounds: 
\begin{equation}\label{prec_op_bound}
\Vert \mathcal{B}\mathcal{A}\Vert_{\mathcal{L}(\bm X,\bm X)}=\sup_{\bx,\by}\frac{(\mathcal{B}\mathcal{A} \bx,\by)_{\bm X}}
{\Vert \bx\Vert_{\bm X}\Vert \by\Vert_{\bm X}} 
= \sup_{\bx,\by}\frac{\langle \mathcal{A}\bx,\by \rangle}{\Vert \bx\Vert_{\bm X}\Vert \by\Vert_{\bm X}}
=   \sup_{\bx,\by}\frac{ \mathcal{A}(\bx,\by)}{\Vert \bx\Vert_{\bm X}\Vert \by\Vert_{\bm X}}\le C,
\end{equation}
\begin{align}\label{prec_op_inf_sup}
\left(\Vert (\mathcal{B}\mathcal{A})^{-1}\Vert_{\mathcal{L}(\bm X,\bm X)}\right)^{-1}
=& \inf_{\bx} \left(  \frac{1}{\displaystyle\sup_{\by} \frac{((\mathcal{B}\mathcal{A})^{-1}\bx,\by)_{\bm X}}
{\Vert \bx\Vert_{\bm X}\Vert \by\Vert_{\bm X}} }\right)
=
\inf_{\bx}\sup_{\by}\frac{(\mathcal{B}\mathcal{A} \bx,\by)_{\bm X}}
{\Vert \bx\Vert_{\bm X}\Vert \by\Vert_{\bm X}} 
\nonumber \\
=& \inf_{\bx}\sup_{\by}\frac{\langle \mathcal{A}\bx,\by \rangle}{\Vert \bx\Vert_{\bm X}\Vert \by\Vert_{\bm X}} 
= \inf_{\bx}  \sup_{\by}\frac{ \mathcal{A}(\bx,\by)}{\Vert \bx\Vert_{\bm X}\Vert \by\Vert_{\bm X}}\ge \beta.
\end{align}

Finally, \eqref{prec_op_bound} and \eqref{prec_op_inf_sup} together imply that the condition number $\kappa$ of the preconditioned 
operator $\mathcal{B}\mathcal{A}\in \mathcal{L}(\bm X,\bm X)$ is uniformly bounded by a constant that does not depend on any 
problem parameters, i.e.,
\begin{equation}\label{cond_est}
\kappa(\mathcal{B}\mathcal{A}):=\Vert \mathcal{B}\mathcal{A} \Vert_{\mathcal{L}(\bm X,\bm X)} 
\Vert (\mathcal{B}\mathcal{A})^{-1} \Vert_{\mathcal{L}(\bm X,\bm X)} \le \frac{C}{\beta}.
\end{equation}

\subsection{Optimal error estimates}

The uniform well-posedness that we have established in
Theorem~\ref{well-posed_hdg-hm} for the hybridized/hybrid mixed
discretization implies near best approximation estimates, which we
state next. For the following statements let $(\bu,\bw,p)$ be the
exact solution of the continuous problem~\eqref{eqn:weak-3f} assuming
that
\begin{align} \label{eq::exactreg}
  \bu \in \bm{H}^1_0(\Omega) \cap \bm{H}^2(\mathcal{T}_h), \quad 
  \bw \in \bm{H}_0(\div,\Omega),\quad \textrm{and} \quad 
  p \in H^1(\Omega) \cap H^2(\mathcal{T}_h),
\end{align}
where
$\bm{H}^m(\mathcal{T}_h) := \{v \in \bm{L}^2(\Omega): v|_T \in \bm{H}^m(T)~\forall T \in
\mathcal{T}_h \}$ is the broken Sobolev space of order $m$. Further
let $\overline{\bu} := (\bu,\hat \bu)$ and
$\overline{p} := (p, \hat p)$ with $\hat \bu := \bu|_{\mathcal{F}_h}$
and $\hat p := p|_{\mathcal{F}_h}$.
\begin{theorem}\label{thm:error}
Consider problem~\eqref{HDG-HM_gen}--\eqref{HDG-HM_both} 
as a discretization of the continuous problem~\eqref{eqn:weak-3f} 
in three-field formulation and assume that the exact solution fulfills \eqref{eq::exactreg}. Then the following near-best approximation result holds with a constants 
$\overline{\overline{C}}_{uv},\overline{\overline{C}}_{p} $ independent 
of all problem parameters:
\begin{align}
  \Vert \overline{\bu}-\overline{\bu}_h \Vert_{\overline{\bU}_h}+\Vert \bw-\bw_h \Vert_{\bW^{-}}
  \le \overline{\overline{C}}_{uv}
    \left(\inf_{\overline{\bv}_h\in \overline{\bU}_h}\Vert \overline{\bu} -\overline{\bv}_h 
    \Vert_{\overline{\bU}_h}+\inf_{\bz_h\in \bW^{-}_h}\Vert \bw-\bz_h\Vert_{\bW^{-}} \right) \label{error_est_u}\\
  \Vert \overline{p}-\overline{p}_h \Vert_{\overline{P}_h} 
  \le \overline{\overline{C}}_p
    \left(\inf_{\overline{\bv}_h\in \overline{\bU}_h}\Vert \overline{\bu} -\overline{\bv}_h 
    \Vert_{\overline{\bU}_h}+\inf_{\bz_h\in \bW^{-}_h}\Vert \bw-\bz_h\Vert_{\bW^{-}} +\inf_{\overline{q}_h\in \overline{P}_h}
    \Vert \overline{p}-\overline{q}_h\Vert_{\overline{P}_h} \right) \label{error_est_p}
\end{align}
\end{theorem}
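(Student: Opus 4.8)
The plan is to derive both error estimates from the uniform stability of Theorem~\ref{well-posed_hdg-hm} by the quasi-optimality argument behind Theorem~\ref{thm:1}, and then to sharpen the displacement--velocity bound~\eqref{error_est_u} to a pressure-robust form by exploiting the exact mass conservation $\div\bm{U}_h\subseteq P_h$. The first ingredient is consistency (Galerkin orthogonality): with $\oox:=(\overline{\bu},\bw,\overline{p})$, where $\overline{\bu}=(\bu,\bu|_{\mathcal{F}_h})$ and $\overline{p}=(p,p|_{\mathcal{F}_h})$, the regularity assumption~\eqref{eq::exactreg} makes every facet and jump term in $a_h^{\mathrm{HDG}}$, see~\eqref{a_h_HDG}, vanish when one argument equals $\overline{\bu}$, while elementwise integration by parts shows that the distributional form~\eqref{form_b} reproduces the continuous bilinear pairings once one argument is the exact solution, because $\bw$ has single-valued normal trace and $p\in H^1(\Omega)$. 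Hence $\oox$ solves~\eqref{HDG-HM_gen} and $\ooA_h(\oox-\oox_h,\ooy_h)=0$ for all $\ooy_h\in\ooX_h$.

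Theorem~\ref{well-posed_hdg-hm} supplies the boundedness constant $\overline{\overline C}$ and the parameter-robust inf-sup constant $\overline{\overline\beta}$ of $\ooA_h$ in the norm $\vertiii{\cdot}_{\ooX_h}$. Combined with the orthogonality above and a triangle inequality, the standard reasoning behind Theorem~\ref{thm:1} gives $\vertiii{\oox-\oox_h}_{\ooX_h}\le(1+\overline{\overline C}/\overline{\overline\beta})\,\inf_{\ooy_h\in\ooX_h}\vertiii{\oox-\ooy_h}_{\ooX_h}$. Taking $\ooy_h$ componentwise optimal and unfolding the norm~\eqref{product_norms_b} already proves the pressure estimate~\eqref{error_est_p}. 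The same chain yields~\eqref{error_est_u} as well, but only with an additional term $\inf_{\overline{q}_h}\Vert\overline{p}-\overline{q}_h\Vert_{\overline{P}_h}$ on the right-hand side, which cannot be dropped at this level because the weight $\gamma$ in~\eqref{hdg_p_norm_scaled} may be arbitrarily small.

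To remove that term I would replace the generic pressure comparison element by $\overline{q}_h:=(q_h,\hat{q}_h)$, with $q_h$ the $L^2(\Omega)$-projection of $p$ onto $P_h$ and $\hat{q}_h$ the facetwise $L^2$-projection of $p|_{\mathcal{F}_h}$ onto $\widehat P_h$, and use a divergence-commuting (Fortin) quasi-interpolant $\Pi_h\bu\in\bm{U}_h$, extended by a facet interpolant to $\overline{\bm U}_h$, such that $\div\Pi_h\bu$ equals the $L^2$-projection of $\div\bu$ onto $P_h$, together with the elementwise Raviart--Thomas interpolant $\Pi_h^{\bm W}\bw\in\bm{W}_h^-$. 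With these choices the consistency contributions $(p-q_h,\div\bv_h)$, $b(\overline{p}-\overline{q}_h,\bz_h)$, $(\div(\bu-\Pi_h\bu),q_h)$ and $(S(p-q_h),q_h)$ all vanish, using $\div\bm{U}_h\subseteq P_h$, $\div\bm{W}_h^-$ being elementwise in $\mathrm{P}_{\ell-1}$, and the defining moments of the projections (the $b$-term with $\bz_h$ being handled exactly as in~\eqref{eq::bcont}, and the zero-mean constraint on $P_h$ absorbed in the usual way). The resulting Galerkin-orthogonality identity for the \emph{discrete} error $\oox_h-(\Pi_h\bu,\Pi_h^{\bm W}\bw,\overline{q}_h)$ then has a right-hand side bounded solely by $\Vert\bu-\Pi_h\bu\Vert_{\overline{\bm U}_h}+\Vert\bw-\Pi_h^{\bm W}\bw\Vert_{\bm{W}^-}$; applying the inf-sup bound of Theorem~\ref{well-posed_hdg-hm} to this discrete error, then a triangle inequality, and the stability and approximation properties of the two interpolants (which turn the interpolation errors into the infima in~\eqref{error_est_u}) finishes the argument.

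I expect the principal obstacle to be the construction and analysis of the divergence-commuting interpolation operator on the hybridized displacement space: it must reproduce the $L^2$-projection of $\div\bu$, be stable in the mesh-dependent HDG norm~\eqref{norm_HDG} (including the weighted term $h_T^2|\cdot|_{2,T}^2$) and, uniformly in $\lambda$, in the $\lambda\Vert\div\cdot\Vert_0^2$ part of~\eqref{norm_Uh_tilde}, and its facet component must be compatible with the orthogonality against the distributional form~\eqref{form_b} on the discontinuous velocity space $\bm{W}_h^-$. Once these interpolation estimates are in place, parameter robustness of the final constants is automatic, being inherited from Theorem~\ref{well-posed_hdg-hm}.
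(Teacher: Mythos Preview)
Your proposal is correct and follows the same route the paper indicates by reference to Theorem~5.2 in~\cite{HongKraus2018}: consistency plus the uniform well-posedness of Theorem~\ref{well-posed_hdg-hm} give~\eqref{error_est_p} via the standard quasi-optimality argument, and the pressure-independent bound~\eqref{error_est_u} follows by choosing the comparison functions as the $L^2$-projections in $\overline P_h$ together with divergence-commuting (Fortin/Raviart--Thomas) interpolants in $\overline{\bm U}_h$ and $\bm W_h^-$, so that every pressure residual term drops. The obstacle you flag---stability of the BDM Fortin operator in the mesh-dependent HDG norm~\eqref{norm_HDG}, uniformly in $\lambda$---is precisely the technical ingredient one needs, and it is available from the references cited around~\eqref{a_h_HDG_cont}--\eqref{a_h_HDG_coer}.
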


\begin{proof}
The proof follows the lines of the proof of Theorem 5.2 in~\cite{HongKraus2018}.
\end{proof}

\begin{remark}
An analogous result to Theorem~\ref{thm:error} is also valid for the discrete problem~\eqref{HDG_gen}--\eqref{HDG_both} 
if one replaces the spaces $\bW^{-}$, $\bW_h^{-}$, $\overline{P}_h$ by $\bW$, $\bW_h$ and $P_h$ and the corresponding 
norms
 $\Vert \cdot \Vert_{\bW^{-}}$ and $\Vert \cdot \Vert_{\overline{P}_h}$ by $\Vert \cdot \Vert_{\bW}$ and $\Vert \cdot \Vert_P$. 
 The result is then a consequence of Theorem~\ref{well-posed_hdg}.
\end{remark}

In the following let
$\overline{\Pi}_{P_h}(\cdot) =( \Pi_{P_h}(\cdot), \Pi_{\hat
  P_h}(\cdot)) \in \overline{P}_h$ be the standard element and
facet-wise $L^2$-projection. Using the proper, well known (see
\cite{Boffi2013mixed, arnold2002unified, LehrenfeldSchoeberl2016high})
interpolation operators and standard arguments, one can derive the
following optimal error estimates from the above best approximation
results.
\begin{theorem} \label{th::errorest} Consider
  problem~\eqref{HDG-HM_gen}--\eqref{HDG-HM_both} as a discretization
  of the continuous problem~\eqref{eqn:weak-3f} in three-field
  formulation. Beside \eqref{eq::exactreg} we assume that the exact
  solution fulfills the regularity estimate
  $(\bu, \bw, p) \in \bm{H}^{m}(\mathcal{T}_h) \times
  \bm{H}^{m-1}(\mathcal{T}_h) \times {H}^{m-1}(\mathcal{T}_h)$. Then
  there hold the following error estimates with a constants
  $\overline{\overline{C}}_{e,uv},\overline{\overline{C}}_{e,p} $
  independent of all problem parameters:
  \begin{align*}
    \Vert \overline{\bu}-\overline{\bu}_h \Vert_{\overline{\bU}_h}+\Vert \bw-\bw_h \Vert_{\bW^{-}}
    &+\Vert \overline{\Pi}_{P_h}\overline{p}-\overline{p}_h \Vert_{\overline{P}_h} \\
    &\le \overline{\overline{C}}_{e, uv}
      h^{s} ( | \bu |_{\bm{H}^{s+1}(\mathcal{T}_h)} + \lambda^{\frac{1}{2}} |\div(\bu)|_{\bm{H}^{s}(\mathcal{T}_h)} + R^{-\frac{1}{2}}| \bw |_{\bm{H}^{s}(\mathcal{T}_h)}),
  \end{align*}
\begin{align*}  
    \Vert \overline{p}-\overline{p}_h \Vert_{\overline{P}_h} 
    &\le \overline{\overline{C}}_{e,p}
      h^{s-1} ( | \bu |_{\bm{H}^{s}(\mathcal{T}_h)} + \lambda^{\frac{1}{2}} |\div(\bu)|_{\bm{H}^{s-1}(\mathcal{T}_h)} \\
    &\qquad \qquad \qquad\qquad+ R^{-\frac{1}{2}}| \bw |_{\bm{H}^{s-1}(\mathcal{T}_h)} + R^{\frac{1}{2}} | p |_{H^{s}(\mathcal{T}_h)} + \gamma^{\frac{1}{2}} | p |_{H^{s-1}(\mathcal{T}_h)}).
  \end{align*}
  where $s := \min\{l,m-1\}.$
\end{theorem}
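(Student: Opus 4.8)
The plan is to deduce Theorem~\ref{th::errorest} from the near-best approximation result of Theorem~\ref{thm:error}: since \eqref{error_est_u}--\eqref{error_est_p} already bound the discretization error by the best-approximation errors in the three parameter-weighted norms, it suffices to exhibit, for the exact solution $(\bu,\bw,p)$ satisfying \eqref{eq::exactreg} together with the additional piecewise regularity, one admissible triple of discrete functions whose distance to $(\overline{\bu},\bw,\overline{p})$ has the asserted order. I would use the canonical, complex-compatible interpolants: for $\bu$ the $\mathrm{BD(F)M}_{\ell}$ interpolant $\bv_h:=\Pi^{\mathrm{div}}_h\bu\in\bm U_h$, paired with the facet function $\hat{\bv}_h\in\widehat{\bU}_h$ taken to be the $L^2(\mathcal{F}_h)$-projection of the tangential trace $\bu_t$; for $\bw$ the Raviart--Thomas interpolant $\bz_h:=\Pi^{\mathrm{RT}}_h\bw\in\bm W_h^{-}$; and for $p$ the pair $\overline{q}_h:=\overline{\Pi}_{P_h}\overline{p}=(\Pi_{P_h}p,\Pi_{\hat P_h}\hat p)$ of element- and facet-wise $L^2$-projections.

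First I would collect the element-local approximation properties, all consequences of the Bramble--Hilbert lemma together with standard scaling and (discrete) trace inequalities: for $1\le\sigma\le\ell$ and each $T\in\mathcal{T}_h$,
\begin{align*}
  \Vert\nabla(\bu-\bv_h)\Vert_{0,T}+h_T\vert\bu-\bv_h\vert_{2,T}+h_T^{-1/2}\Vert(\hat{\bv}_h-\bv_h)_t\Vert_{0,\partial T}&\lesssim h_T^{\sigma}\vert\bu\vert_{\bm{H}^{\sigma+1}(T)},\\
  \Vert\nabla(p-q_h)\Vert_{0,T}+h_T\vert p-q_h\vert_{2,T}+h_T^{-1/2}\Vert\hat q_h-q_h\Vert_{0,\partial T}&\lesssim h_T^{\sigma-1}\vert p\vert_{H^{\sigma}(T)},\\
  \Vert\bw-\bz_h\Vert_{0,T}+\Vert p-q_h\Vert_{0,T}&\lesssim h_T^{\sigma}\bigl(\vert\bw\vert_{\bm{H}^{\sigma}(T)}+\vert p\vert_{H^{\sigma}(T)}\bigr);
\end{align*}
here I use that, since $\bu\in\bm{H}^1_0(\Omega)$ and $p\in H^1(\Omega)$, the exact facet unknowns $\hat{\bu}=\bu|_{\mathcal{F}_h}$ and $\hat p=p|_{\mathcal{F}_h}$ coincide (tangentially, resp.\ as traces) with the corresponding element values, so that the facet contributions to the $\Vert\cdot\Vert_{\text{HDG}}$ norms of the interpolation errors collapse to the single terms displayed, each controlled by a trace estimate applied to the element interpolation error; the second-order seminorm additionally uses the assumed piecewise $H^2$-regularity.

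The only genuinely structural point — and the reason for choosing $\Pi^{\mathrm{div}}_h$ and $\Pi^{\mathrm{RT}}_h$ rather than generic interpolants — is the $\lambda$-weighted divergence contribution in $\Vert\cdot\Vert_{\overline{\bU}_h}$. By the commuting-diagram property $\div\Pi^{\mathrm{div}}_h\bu=\Pi_{P_h}\div\bu$ one gets $\lambda^{1/2}\Vert\div(\bu-\bv_h)\Vert_0=\lambda^{1/2}\Vert(I-\Pi_{P_h})\div\bu\Vert_0\lesssim\lambda^{1/2}h^{\sigma}\vert\div\bu\vert_{\bm{H}^{\sigma}(\mathcal{T}_h)}$, so the divergence part decays at the full rate with no negative power of $h$ paid against $\lambda^{1/2}$. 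The same property, together with $\div\bm U_h\subseteq P_h$, the normal traces of $\bm W_h^{-}$ lying in $\widehat P_h$, and $SP_h\subseteq P_h$, also shows that $\overline{p}-\overline{\Pi}_{P_h}\overline{p}$ is $L^2$-orthogonal to all the discrete pairings $(\,\cdot\,,\div\,\cdot\,)$, $b(\,\cdot\,,\cdot\,)$ and $(S\,\cdot\,,\cdot\,)$ that arise when the error equation is tested against the inf-sup functions; hence — as in the proof of Theorem~\ref{thm:error} — the pressure enters those pairings only through the discrete quantity $\overline{\Pi}_{P_h}\overline{p}-\overline{p}_h$, which is therefore controlled by the displacement and velocity best-approximation errors alone, with no pressure-regularity term. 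Summing the local bounds with $\sigma=s$ and attaching the weights $1$, $R^{-1/2}$ then gives $\Vert\overline{\bu}-\overline{\bv}_h\Vert_{\overline{\bU}_h}+\Vert\bw-\bz_h\Vert_{\bW^{-}}\lesssim h^{s}\bigl(\vert\bu\vert_{\bm{H}^{s+1}(\mathcal{T}_h)}+\lambda^{1/2}\vert\div\bu\vert_{\bm{H}^{s}(\mathcal{T}_h)}+R^{-1/2}\vert\bw\vert_{\bm{H}^{s}(\mathcal{T}_h)}\bigr)$, which combined with Theorem~\ref{thm:error} yields the first estimate (including the $\Vert\overline{\Pi}_{P_h}\overline{p}-\overline{p}_h\Vert_{\overline{P}_h}$ term). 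For the pressure, $\Vert\overline{p}-\overline{q}_h\Vert_{\overline{P}_h}^2=R\,\Vert\overline{p}-\overline{q}_h\Vert_{\text{HDG}}^2+\gamma\,\Vert p-q_h\Vert_0^2\lesssim R\,h^{2(s-1)}\vert p\vert_{H^{s}(\mathcal{T}_h)}^2+\gamma\,h^{2(s-1)}\vert p\vert_{H^{s-1}(\mathcal{T}_h)}^2$ (taking $\sigma=s$ in the HDG part and $\sigma=s-1$ in the $L^2$ part), while in \eqref{error_est_p} the $\overline{\bu}$- and $\bw$-contributions are re-estimated one interpolation order lower ($\sigma=s-1$), which the $O(h^{s-1})$ pressure rate permits, producing the terms $h^{s-1}\vert\bu\vert_{\bm{H}^{s}(\mathcal{T}_h)}$, $h^{s-1}\lambda^{1/2}\vert\div\bu\vert_{\bm{H}^{s-1}(\mathcal{T}_h)}$ and $h^{s-1}R^{-1/2}\vert\bw\vert_{\bm{H}^{s-1}(\mathcal{T}_h)}$; substituting into Theorem~\ref{thm:error} gives the second inequality.

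I expect no real obstacle: once Theorem~\ref{thm:error} (and the orthogonality observation behind its proof) is available, the argument is classical approximation theory. The points that genuinely require care are the bookkeeping ones above — the cancellation of the exact facet traces, the $L^2$-orthogonality relations that remove pressure regularity from the first estimate, keeping the weights $1$, $R^{\pm1/2}$, $\gamma^{1/2}$ attached to the right terms, and the mild smoothness needed for $\Pi^{\mathrm{div}}_h$ and $\Pi^{\mathrm{RT}}_h$ to be well defined (piecewise $\bm{H}^{s}(\mathcal{T}_h)$, $s\ge1$); if one wanted to assume only $\bm{H}(\div)$-regularity of $\bw$ one would use Clément-type quasi-interpolants preserving the commuting property instead, exactly as in \cite{Boffi2013mixed,arnold2002unified,LehrenfeldSchoeberl2016high}.
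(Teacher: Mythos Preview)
Your proposal is correct and follows exactly the route the paper indicates: the paper does not give a detailed proof of Theorem~\ref{th::errorest} but merely states that ``using the proper, well known (see \cite{Boffi2013mixed, arnold2002unified, LehrenfeldSchoeberl2016high}) interpolation operators and standard arguments, one can derive the following optimal error estimates from the above best approximation results.'' Your argument fleshes out precisely these standard steps---commuting-diagram interpolants for $\bu$ and $\bw$, $L^2$-projections for $(p,\hat p)$, and the orthogonality observation needed to place $\Vert\overline{\Pi}_{P_h}\overline{p}-\overline{p}_h\Vert_{\overline{P}_h}$ on the left of the first estimate---so there is nothing to correct or contrast.
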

%\subsubsection{Parameter-robust preconditioners for the time-discrete problem}

\begin{remark}
  Assuming enough regularity of the exact solution, Theorem
  \ref{th::errorest} shows that the projected error
  $\Vert \overline{\Pi}_{P_h}\overline{p}-\overline{p}_h
  \Vert_{\overline{P}_h}$ converges with one order higher than
  $\Vert \overline{p}-\overline{p}_h \Vert_{\overline{P}_h}$. This
  super convergence property of (hybrid) mixed methods is well known
  in the literature, see for example \cite{stenberg2011,MR2860674}.
\end{remark}
\subsection{Implementation aspects and static condensation} 
In order to solve the discrete system, we employ static
condensation of the local degrees of freedom. These are given by the
dof introduced through the discontinuous approximation spaces $\bW_h^{-}$
and $P_h$. 
%Further, 
One can also eliminate the local $\bm H(\div)$-conforming
element bubbles of the space $\bU_h$. However, for ease of
representation, we only consider the lowest order case $l =1$, hence, no
bubbles for the displacement are present. In the following, we use the
same symbols $\overline \bu_h := (\bu_h, \hat \bu_h)$, $\bw_h$, $p_h$
and $\hat p_h$ for the representation of the coefficients of the
corresponding discrete finite element solutions.  Then
\eqref{Cons-disc-3-field-HDG-HM} can be written as
\begin{align*}
  \begin{pmatrix}
    A_{\overline u} & 0 &B_u^\T & 0 \\
    0 & M_w & B_w^\T & \hat B_w^\T \\
    B_u & B_w & -M_p & 0 \\
    0 & \hat B_w & 0 & 0\\
  \end{pmatrix}
  \begin{pmatrix}
    \overline \bu_h\\
    \bw_h \\
    p_h \\
    \hat p_h\\
  \end{pmatrix} =
  \begin{pmatrix}
    \bf_h\\ \bm 0\\g_h\\0
  \end{pmatrix},
\end{align*}
where $\bf_h$ represent the corresponding vector of the right hand
side $(\bf, \bv_h)$ and $g_h$ the vector of $(g, q_h)$. Further,
$A_{\overline u},B_u, M_w,M_p, B_w$ and $\hat B_w$ denote the
operators, or their corresponding matrix representations, defined via
the bilinear forms
$a_{h}((\bu_h, \hat{\bu}_h)$, $(\bv_h, \hat{\bv}_h))$, $(-\div \bu_h, q_h)$,
$(R^{-1}w_h,z_h)$, $(Sp_h,q_h)$, $b((q_h,0), \bw_h)$ and
$b((0,\hat q_h), \bw_h) $, respectively. From the second line we see
that we can eliminate $\bw_h$
%by
using 
$
  \bw_h = M_w^{-1}(-B_w^\T p_h - \hat B_w^\T \hat p_h).
$
Then the third line gives 
$
  p_h = -(M_p + B_w M_w^{-1} B_w^\T)^{-1} (-B_u \overline \bu_h + B_w M_w^{-1} \hat B_w^\T \hat p_h).
$
Thus, 
%in total 
we have the following system to solve
\begin{align} \label{eq::condsys}
  \begin{pmatrix}
    A & B^\T\\B & -C
  \end{pmatrix}
  \begin{pmatrix}
    \overline \bu_h \\ \hat p_h\\
  \end{pmatrix} = 
  \begin{pmatrix}
    \bf_h \\ g_h\\
  \end{pmatrix},
\end{align}
with
\begin{align*}
  A &:=  A_{\overline u} + B_u^\T (M_p + B_w M_w^{-1} B_w^\T)^{-1} B_u, \\
  B &:= -\hat B_w M_w^{-1} B_w^\T (M_p + B_w M_w^{-1} B_w^\T)^{-1} B_u,\\
  C &:= \hat B_w M_w^{-1} B_w^\T (M_p + B_w M_w^{-1} B_w^\T)^{-1} B_w M_w^{-1} \hat B_w^\T + \hat B_w M_w^{-1} \hat B_w^\T.
\end{align*}
Note that $M_w,M_p$ and $(M_p + B_w M_w^{-1} B_w^\T)$ are all block
diagonal, thus locally invertible. Further, the latter operator is equivalent to
%reads as a matrix representation of 
a (scaled) $H^1$-like norm on $\hat P_h$.  By means of norm equivalent
preconditioning, cf. equation \eqref{eq::defprecond}, we now follow
two different approaches. The first preconditioner we investigate is
based on a block system that decouples mechanics from the flow
problem, and, additionally, the velocity from the fluid pressure. The
latter is achieved by introducing an HDG bilinear form on
$\overline P_h$ for the discretization of $\div(R \nabla p)$ as given
in the original equation \eqref{eq::twofield} (where $K$ was replaced
due to scaling by $R$). 
%The first preconditioner we investigate is based on the
%$H^1$-like HDG norm defined on $\overline P_h$ and relates directly to the presented 
%well-posedness analysis.
%To this end 
Henceforth, let $\tilde M_p$ denote the matrix
representation of the scaled bilinear form $(\gamma p_h,q_h)$. Then we
define the operator 
\begin{align*}
  \mathcal{B} :=
  \begin{pmatrix}
    A_{\overline u} & 0 &0 & 0 \\
    0 & M_w & 0 & 0 \\
    0 & 0 & -\tilde M_p - A_p & -B_p^\T \\
    0 & 0 & -B_p &  -A_{\hat p}\\
  \end{pmatrix}^{-1}.
\end{align*} 
%elasticity equation and the $H^1$-like system
%of the pressure. 
where $A_p$, $B_p$ and $A_{\hat p}$ correspond to the
bilinear forms given by
\begin{align*}
  a_p(p_h, q_h) &:= R \sum_{T \in \mathcal{T}_h}\int_T \nabla p_h \cdot \nabla q_h \dx + \int_{\partial T} (-\nabla p_h \cdot n q_h - \nabla q_h \cdot n p_h) + \eta_p{l^2}{h^{-1}} \eta_p p_h q_h \ds,\\
  b_p(p_h, \hat q_h) &:= R \sum_{T \in \mathcal{T}_h} \int_{\partial T} \nabla p_h \cdot n  \hat q_h -\eta_p{l^2}{h^{-1}}  p_h \hat q_h \ds, \\
  a_{\hat p}(\hat p_h, \hat q_h) &:= R \sum_{T \in \mathcal{T}_h} \int_{\partial T}\eta_p{l^2}{h^{-1}} \hat  p_h \hat q_h \ds,
\end{align*}
respectively, where $\eta_p$ is again a sufficiently large
stabilization parameter. Note that the combined bilinear form
$ a_p(p_h, q_h) + b_p(p_h, \hat q_h) + b_p(q_h, \hat p_h) + a_{\hat
  p}(\hat p_h, \hat q_h)$ is the HDG bilinear form mentioned above
which is continuous and elliptic with respect to
$R \Vert \cdot \Vert_{\text{HDG}}$.  Similarly, as before, we can
eliminate the local variables to obtain the following preconditioner
\begin{align} \label{eq::precondone}
  \begin{pmatrix}
     A_{\overline u} & 0\\0 & -(A_{\hat p} + B_p (\tilde M_p^{-1} + A_p^{-1}) B_p^T)\\
  \end{pmatrix}
\end{align}
for the condensed system \eqref{eq::condsys}, where we have again made
use of $A_p$ being block diagonal and invertible. Further,
note that both blocks on the diagonal are $H^1$-type systems. Thus,
standard solvers, such as, for example, an algebraic multigrid method
for the lowest order system and a ``balancing domain decomposition
with constriants''(BDDC) preconditioner, the latter featuring
robustness in the polynomial degree, can be used.

The second block diagonal preconditioner we test still satisfies the
norm equivalence \eqref{eq::defprecond}, but decouples only the
mechanics and flow problems, hence, keeps the hybrid mixed formulation
of the velocity pressure system.
% constructed on the same basic principle, however, now
%following the same basic ideas thereby
%exploiting the hybrid mixed
%formulation of the flow subsystem
The block diagonal operator preconditioner is then given by
\begin{align*}
  \mathcal{B} : = 
  \begin{pmatrix}
    A_{\overline u} & 0 &0 & 0 \\
    0 & M_w & B_w^\T & \hat B_w^\T \\
    0 & B_w & -\tilde M_p & 0 \\
    0 & \hat B_w & 0 & 0\\
  \end{pmatrix}^{-1}.
\end{align*}
Following similar steps as above, the preconditioner for the condensed
system is
\begin{align} \label{eq::precondtwo}
  \begin{pmatrix}
     A_{\overline u} & 0\\0 & -\tilde C
  \end{pmatrix},
\end{align}
where $\tilde C$ is the same as $C$ with $M_p$ replaced by
$\tilde M_p$. The advantage of the preconditioner defined by
\eqref{eq::precondtwo}, as demonstrated below in Section~\ref{sec::paramrob}, is that the subsystem for the pressure variable
does not require a stabilization parameter $\eta_p$ which in general
affects the condition number.

% \subsubsection{Hybridized DG method}

%\subsubsection{Hybridized DG/mixed hybrid methods}

%\subsection{The discrete case}
%
%The results from the previos two subsections carry over to the discrete case. We prove discrete well-posedness
%and formulate the uniform preconditioners for the finite-dimensional problems resulting from application of the new
%family of hybridized DG/mixed hybrid methods for the Biot problem.

\section{Numerical results}\label{numres}

In this section, we present several numerical examples to validate our theoretical findings. 
%In the first section 
First, we test for the expected orders of convergence for a problem with a constructed solution increasing the degree of the FE 
%polynomial 
approximation. 
%In the second example, 
Second, we study the parameter-robustness of the proposed preconditioners. 
Finally, we 
%finish this section with a discussion on 
discuss the cost efficiency of our modified methods. 
All numerical examples are implemented within the finite element library Netgen/NGSolve, see~\cite{netgen, ngsolve} and \url{www.ngsolve.org}.
\subsection{Convergence of the hybridized/hybrid mixed method}
Here we discuss the convergence orders of the errors of the methods introduced in this work. 
Note, however, that we only consider the discretization given by~\eqref{Cons-disc-3-field-HDG-HM} 
since the solution is the same as of~\eqref{Cons-disc-3-field-HDG}. 

\subsubsection{2D example} \label{sec::twodexample}
We solve problem \eqref{Cons-disc-3-field-HDG-HM} on the spatial domain $\Omega = (0,1)^2$ 
and choose the right hand side $\bm f$ and $g$ such that the exact solutions are given by
\begin{align*}
  \bm u &:= (-\partial_y \phi, \partial_x \phi), \quad 
          p := \sin(\pi x) \sin(\pi y) - p_0,
\end{align*}
with the potential $\phi = x^2(1-x)^2y^2(1-y)^2$ and
$p_0\in \mathbb{R}$ is chosen such that $p \in L^2_0(\Omega)$.  For
simplicity, we choose the constants $K = 1$, $\mu = 1$, $S_0 = 1$, and
$\alpha = 1$. Further, we set $\lambda = c$ with an arbitrary constant
$c \in \mathbb{R}^{+}$ since the exact and discrete solutions are exactly
divergence-free.
%Note that although the displacement is such that
%$\operatorname{div} \bm u =0$, there is a coupling between the
%equations due to $\nabla p$, see the first equation in
%\eqref{Cons-disc-3-field-HDG-HM}.
%Further note that the pressure has
%non-homogeneous Dirichlet boundary conditions.

In Table~\ref{twodexample} we have displayed several discrete errors
and their estimated order of convergence (eoc) for the discretization
of problem \eqref{Cons-disc-3-field-HDG-HM} for varying polynomial
orders $l = 1,2,3,4$. Whereas the $H^1$-seminorm error of the
displacement $\bm u_h$ and the pressure $\bm p_h$ converge with the
expected (see Theorem \ref{th::errorest}) order $\mathcal{O}(h^l)$ and
$\mathcal{O}(h^{l-1})$, respectively, the corresponding $L^2$-norm
errors converge with order $\mathcal{O}(h^{l+1})$ and
$\mathcal{O}(h^{l})$.  This can be shown by a standard Aubin-Nitsche
duality argument whenever the considered problem is sufficiently
regular, see for example \cite{Boffi2013mixed}. Note also that the
$L^2$-norm error $||\nabla p + R^{-1}\bm w_h||_0$ of the discrete
velocity $\bm w_h$ converges with optimal order
$\mathcal{O}(h^{l})$. In the lowest order case where we have a
piece-wise constant approximation of the pressure $p_h$, we do not
present the $H^1$-semi norm error of the pressure since the gradient
$\nabla p_h$ vanishes locally on each element.

\begin{table}[h]
\begin{center}
\scriptsize
%\begin{tabular}{r@{~}|@{~}c@{(}c@{)~}c@{(}c@{)}c@{(}c@{)~}c@{(}c@{)}}
%  \begin{tabular}{@{~}c@{~}|@{~~~}c@{~~~(}c@{)~~~}c@{~~~(}c@{)~~~}c@{~~~(}c@{)~~~}c@{~~~(}c@{)}}
%\begin{tabular}{@{~}c@{~}|c@{~(}c@{)~}cccccc}
  \begin{tabular}{r@{~}|@{~} c @{\hskip 0.1in(} c @{)\hskip 0.1in } c @{\hskip 0.1in(} c @{)\hskip 0.1in}c@{\hskip 0.1in(}c@{)\hskip 0.1in}c@{\hskip 0.1in(}c@{)\hskip 0.1in}c@{\hskip 0.1in(}c@{)}}
      \toprule
  $|\mathcal{T}|$ & $|| \nabla u - \nabla u_h||_0$ & \footnotesize eoc &$|| u - u_h||_0$ & \footnotesize eoc &$|| \nabla p - \nabla p_h ||_0$ & \footnotesize eoc &$|| p - p_h||_0$ & \footnotesize eoc & $||\nabla p + R^{-1}\bm w_h||_0$ & \footnotesize eoc   \\
\midrule
\multicolumn{11}{c}{$l=1$}\\
6& \num{0.05296588089681155}&--& \num{0.0054696012333586095}&--& -- &--& \num{0.18020014360177114}&--& \num{0.9739259104175237}&--\\
24& \num{0.04879615037983135}&\numeoc{0.12}& \num{0.004872874101873878}&\numeoc{0.17}& --&--& \num{0.17793880884375782}&\numeoc{0.02}& \num{0.5712239194201587}&\numeoc{0.77}\\
96& \num{0.022474791428348625}&\numeoc{1.12}& \num{0.001083464627642091}&\numeoc{2.17}& --&--& \num{0.08147932878196795}&\numeoc{1.13}& \num{0.2816014026231343}&\numeoc{1.02}\\
384& \num{0.010910873608265786}&\numeoc{1.04}& \num{0.0002569802984463594}&\numeoc{2.08}& --&--& \num{0.040226161431177666}&\numeoc{1.02}& \num{0.14204818433069816}&\numeoc{0.99}\\
1536& \num{0.005420984383887481}&\numeoc{1.01}& \num{6.294458285613617e-05}&\numeoc{2.03}& --&--& \num{0.020057644826557617}&\numeoc{1.00}& \num{0.07125695739500504}&\numeoc{1.00}\\
6144& \num{0.002707489300540879}&\numeoc{1.00}& \num{1.559755800239475e-05}&\numeoc{2.01}& --&--& \num{0.010022130873713384}&\numeoc{1.00}& \num{0.03566051564470094}&\numeoc{1.00}\\
\midrule
\multicolumn{11}{c}{$l=2$}\\
6& \num{0.045710841873620324}&--& \num{0.005065360004848349}&--& \num{1.6731380565552127}&--& \num{0.1532603117701809}&--& \num{0.29026631270561176}&--\\
24& \num{0.010630832938228198}&\numeoc{2.10}& \num{0.0005898897247898164}&\numeoc{3.10}& \num{0.7436905719424244}&\numeoc{1.17}& \num{0.0312242020673273}&\numeoc{2.30}& \num{0.0816872906157874}&\numeoc{1.83}\\
96& \num{0.0029997582309984434}&\numeoc{1.83}& \num{7.395586463234664e-05}&\numeoc{3.00}& \num{0.38010541290354677}&\numeoc{0.97}& \num{0.00786538228627508}&\numeoc{1.99}& \num{0.026810519576291648}&\numeoc{1.61}\\
384& \num{0.0007723988108188677}&\numeoc{1.96}& \num{9.276501869592985e-06}&\numeoc{3.00}& \num{0.19088995256566327}&\numeoc{0.99}& \num{0.0019687287289485915}&\numeoc{2.00}& \num{0.006943827598469703}&\numeoc{1.95}\\
1536& \num{0.00019392294646265008}&\numeoc{1.99}& \num{1.1647832119221928e-06}&\numeoc{2.99}& \num{0.09554622361026065}&\numeoc{1.00}& \num{0.0004923247171103386}&\numeoc{2.00}& \num{0.0017533074407490457}&\numeoc{1.99}\\
6144& \num{4.851190030542302e-05}&\numeoc{2.00}& \num{1.460873979084967e-07}&\numeoc{3.00}& \num{0.047785629152143724}&\numeoc{1.00}& \num{0.00012308993113571305}&\numeoc{2.00}& \num{0.00043989032082625295}&\numeoc{1.99}\\
\midrule 
\multicolumn{11}{c}{$l=3$}\\
6& \num{0.00884374685274383}&--& \num{0.0005071216333523856}&--& \num{0.2729265199813085}&--& \num{0.006247648246626007}&--& \num{0.09687170687032122}&--\\
24& \num{0.002343807003847277}&\numeoc{1.92}& \num{6.740517549246063e-05}&\numeoc{2.91}& \num{0.1431623646306074}&\numeoc{0.93}& \num{0.0038775127435579384}&\numeoc{0.69}& \num{0.01010628196894606}&\numeoc{3.26}\\
96& \num{0.0003084217005805856}&\numeoc{2.93}& \num{4.3071933465229906e-06}&\numeoc{3.97}& \num{0.0365028045441106}&\numeoc{1.97}& \num{0.000456252975448396}&\numeoc{3.09}& \num{0.001344619138095218}&\numeoc{2.91}\\
384& \num{3.711178333778603e-05}&\numeoc{3.05}& \num{2.5425474698110053e-07}&\numeoc{4.08}& \num{0.009172023703079351}&\numeoc{1.99}& \num{5.662568335139901e-05}&\numeoc{3.01}& \num{0.00017099003665718373}&\numeoc{2.98}\\
1536& \num{4.575986131065168e-06}&\numeoc{3.02}& \num{1.5567036300243737e-08}&\numeoc{4.03}& \num{0.0022958922252487306}&\numeoc{2.00}& \num{7.0680159247630146e-06}&\numeoc{3.00}& \num{2.1511777914141894e-05}&\numeoc{2.99}\\
6144& \num{5.690948303541737e-07}&\numeoc{3.01}& \num{9.649010314193874e-10}&\numeoc{4.01}& \num{0.0005741533845698664}&\numeoc{2.00}& \num{8.83200788376908e-07}&\numeoc{3.00}& \num{2.6960400043806163e-06}&\numeoc{3.00}\\
\midrule
\multicolumn{11}{c}{$l=4$}\\
6& \num{0.00327936256816333}&--& \num{0.0002644323121791122}&--& \num{0.19771413643894445}&--& \num{0.002342321636437553}&--& \num{0.010429028216310096}&--\\
24& \num{0.0002773400550254285}&\numeoc{3.56}& \num{1.0102477116344592e-05}&\numeoc{4.71}& \num{0.019392914068972145}&\numeoc{3.35}& \num{0.0001133247448091654}&\numeoc{4.37}& \num{0.0008265915589515648}&\numeoc{3.66}\\
96& \num{1.578211797530256e-05}&\numeoc{4.14}& \num{2.8802671891463547e-07}&\numeoc{5.13}& \num{0.0024702267449469902}&\numeoc{2.97}& \num{7.376555610602733e-06}&\numeoc{3.94}& \num{5.2690191060316845e-05}&\numeoc{3.97}\\
384& \num{9.836857868045258e-07}&\numeoc{4.00}& \num{8.947229023753106e-09}&\numeoc{5.01}& \num{0.000310122806640755}&\numeoc{2.99}& \num{4.6645188910115285e-07}&\numeoc{3.98}& \num{3.3199976597046184e-06}&\numeoc{3.99}\\
1536& \num{6.149564581864992e-08}&\numeoc{4.00}& \num{2.795790787499885e-10}&\numeoc{5.00}& \num{3.880729080970399e-05}&\numeoc{3.00}& \num{2.9238955667268187e-08}&\numeoc{4.00}& \num{2.0850062120032512e-07}&\numeoc{3.99}\\
6144& \num{3.842815155171198e-09}&\numeoc{4.00}& \num{8.736007252103972e-12}&\numeoc{5.00}& \num{4.852219815146468e-06}&\numeoc{3.00}& \num{1.828783630627499e-09}&\numeoc{4.00}& \num{1.3066147725719299e-08}&\numeoc{4.00}\\
\bottomrule
\end{tabular}
\caption{ The $H^1$-seminorm and the $L^2$-norm errors of the discrete
  displacement $\bm u_h$ and the discrete pressure $p_h$ and the
  $L^2$-norm errors of the discrete velocity $\bm w_h$ for different
  polynomial degrees $l=1,2,3,4$ for the two-dimensional
  example}\label{twodexample}
\end{center}
\end{table}

\subsubsection{3D example}
We solve problem \eqref{Cons-disc-3-field-HDG-HM} on the spatial domain $\Omega = (0,1)^3$ and 
choose the right hand side $\bm f$ and $g$ such that the exact solutions are given by
\begin{align*}
  \bm u &:= \operatorname{curl}(\phi, \phi, \phi), \quad
          p := \sin(\pi x) \sin(\pi y) \sin(\pi z)- p_0,
\end{align*}
with the potential $\phi = x^2(1-x)^2y^2(1-y)^2z^2(1-z)^2$ and
$p_0\in \mathbb{R}$ is chosen such that $p \in L^2_0(\Omega)$. The
parameters $\lambda, \mu, S_0, \alpha, K$ are chosen as in the
two-dimensional example.

%For
%simplicity, we choose the constants $\lambda = 0$, $\mu = 1$,
%$S_0 = 1$,$\alpha = 1$ and $K = 1$.

Again, we present in Table \ref{threedexample} several discrete errors
and their estimated orders of convergence for varying polynomial
degree $l= 1,2,3$. We make the same observations as for the
two-dimensional example, that is, all errors converge with optimal
order as predicted by Theorem \ref{th::errorest}.

\begin{table}[h]
\begin{center}
\scriptsize
%\begin{tabular}{r@{~}|@{~}c@{(}c@{)~}c@{(}c@{)}c@{(}c@{)~}c@{(}c@{)}}
%  \begin{tabular}{@{~}c@{~}|@{~~~}c@{~~~(}c@{)~~~}c@{~~~(}c@{)~~~}c@{~~~(}c@{)~~~}c@{~~~(}c@{)}}
%\begin{tabular}{@{~}c@{~}|c@{~(}c@{)~}cccccc}
  \begin{tabular}{r@{~}|@{~} c @{\hskip 0.1in(} c @{)\hskip 0.1in } c @{\hskip 0.1in(} c @{)\hskip 0.1in}c@{\hskip 0.1in(}c@{)\hskip 0.1in}c@{\hskip 0.1in(}c@{)\hskip 0.1in}c@{\hskip 0.1in(}c@{)}}
      \toprule
  $|\mathcal{T}|$ & $|| \nabla u - \nabla u_h||_0$ & \footnotesize eoc &$|| u - u_h||_0$ & \footnotesize eoc &$|| \nabla p - \nabla p_h ||_0$ & \footnotesize eoc &$|| p - p_h||_0$ & \footnotesize eoc & $||\nabla p + R^{-1}\bm w_h||_0$ & \footnotesize eoc   \\
\midrule
\multicolumn{9}{c}{$l=1$}\\
48& \num{0.005227449771717743}&--& \num{0.00048743300287077134}&--& -- &--& \num{0.17875881635511925}&--& \num{0.938029629205696}&--\\
384& \num{0.0026306366398559894}&\numeoc{0.99}& \num{0.0001696907610001353}&\numeoc{1.52}& --&--& \num{0.09582572511825986}&\numeoc{0.90}& \num{0.4947651951144149}&\numeoc{0.92}\\
3072& \num{0.0013295024502456736}&\numeoc{0.98}& \num{5.234170577436774e-05}&\numeoc{1.70}& --&--& \num{0.04878907706777089}&\numeoc{0.97}& \num{0.25069964409994755}&\numeoc{0.98}\\
24576& \num{0.0006631515813347822}&\numeoc{1.00}& \num{1.4453638817230519e-05}&\numeoc{1.86}& --&--& \num{0.024506275864878867}&\numeoc{0.99}& \num{0.12577216958101642}&\numeoc{1.00}\\
\midrule
\multicolumn{9}{c}{$l=2$}\\
48& \num{0.004783888455125389}&--& \num{0.0004726292199702656}&--& \num{1.0725965368017918}&--& \num{0.06390616021392538}&--& \num{0.27644549113542194}&--\\
384& \num{0.000858065493241947}&\numeoc{2.48}& \num{4.222456711077921e-05}&\numeoc{3.48}& \num{0.5820959653139401}&\numeoc{0.88}& \num{0.017326655307679788}&\numeoc{1.88}& \num{0.0742414347456475}&\numeoc{1.90}\\
3072& \num{0.00019385851489781293}&\numeoc{2.15}& \num{3.5495123082639975e-06}&\numeoc{3.57}& \num{0.2974833013245846}&\numeoc{0.97}& \num{0.004421505259018461}&\numeoc{1.97}& \num{0.018976466208924396}&\numeoc{1.97}\\
24576& \num{4.617265221489976e-05}&\numeoc{2.07}& \num{3.361482582452319e-07}&\numeoc{3.40}& \num{0.14958210011137416}&\numeoc{0.99}& \num{0.0011110440786274447}&\numeoc{1.99}& \num{0.004782518494704592}&\numeoc{1.99}\\
\midrule
\multicolumn{9}{c}{$l=3$}\\
48& \num{0.001159698754824411}&--& \num{7.426135402134066e-05}&--& \num{0.4422936039359659}&--& \num{0.008542651836505722}&--& \num{0.05282309324861693}&--\\
384& \num{0.00012683028766367077}&\numeoc{3.19}& \num{3.6381075572297214e-06}&\numeoc{4.35}& \num{0.12205880996355477}&\numeoc{1.86}& \num{0.001040986870506061}&\numeoc{3.04}& \num{0.006785802752549436}&\numeoc{2.96}\\
3072& \num{1.5384395189638782e-05}&\numeoc{3.04}& \num{2.0881187389926046e-07}&\numeoc{4.12}& \num{0.03131641563860158}&\numeoc{1.96}& \num{0.00012830938518940084}&\numeoc{3.02}& \num{0.0008564980891275221}&\numeoc{2.99}\\
24576& \num{1.8812015427661355e-06}&\numeoc{3.03}& \num{1.2158359754320982e-08}&\numeoc{4.10}& \num{0.007881146860190147}&\numeoc{1.99}& \num{1.5967981613171428e-05}&\numeoc{3.01}& \num{0.000107820904318774}&\numeoc{2.99}\\
    \bottomrule
\end{tabular}
\caption{
  The $H^1$-seminorm and the $L^2$-norm errors of the discrete displacement $\bm u_h$ and the discrete pressure $p_h$ and the $L^2$-norm errors of the discrete velocity $\bm w_h$ for different polynomial degrees $l=1,2,3$ for the three-dimensional example}\label{threedexample}
\end{center}
\end{table}

\subsection{Parameter-robustness of the
  preconditioners} \label{sec::paramrob} In this section, we
demonstrate the robustness of the preconditioners defined in Section~\ref{sec::uniformprecond} with respect to varying physical parameters.
%To this aim 
Again, we solve 
%again 
the example
given in Section~\ref{sec::twodexample} on a fixed triangulation with
384 elements. 
%We solve 
The system is solved by means of the minimal residual method (MinRes)
with a fixed tolerance of $10^{-10}$ and for different polynomial
degrees $l = 1,2,3,4$. In Figure \ref{fig:extwo} we plot the number of
iterations for the preconditioner defined in \eqref{eq::precondone}
with a fixed stabilization parameter $\eta_p = 10$ for variations of
the parameters $R^{-1}, \lambda, S$. In Figure \ref{fig:exone} we plot
the number of iterations for the same example using the preconditioner
defined in \eqref{eq::precondtwo}. Although both preconditioners show
the expected robustness as predicted by the analysis presented in Section~\ref{sec::wellposed}, we see that the results with
\eqref{eq::precondtwo} demonstrate improvement upon those 
%are much better than 
with~\eqref{eq::precondone}.  Besides resulting in a smaller number of
iterations, the second preconditioner \eqref{eq::precondtwo} is
substantially more robust with respect to the polynomial degree
$l$. We emphasize that the definition of \eqref{eq::precondone}
includes a proper scaling of the interior penalty stabilization
parameter with respect to the polynomial order given by
$\mathcal{O}(l^2)$. Although a different (smaller) stabilization
parameter might lead to better results--we have fixed $\eta_p = 10$
here--the analysis unfortunately only shows that $\eta_p$ has to be
chosen sufficiently large (see \cite{arnold2002unified}), its optimal
choice is difficult. Therefore, it is obvious that the mixed
formulation, which is known to result in a minimal stabilization, as
used in \eqref{eq::precondtwo}, is preferable.

  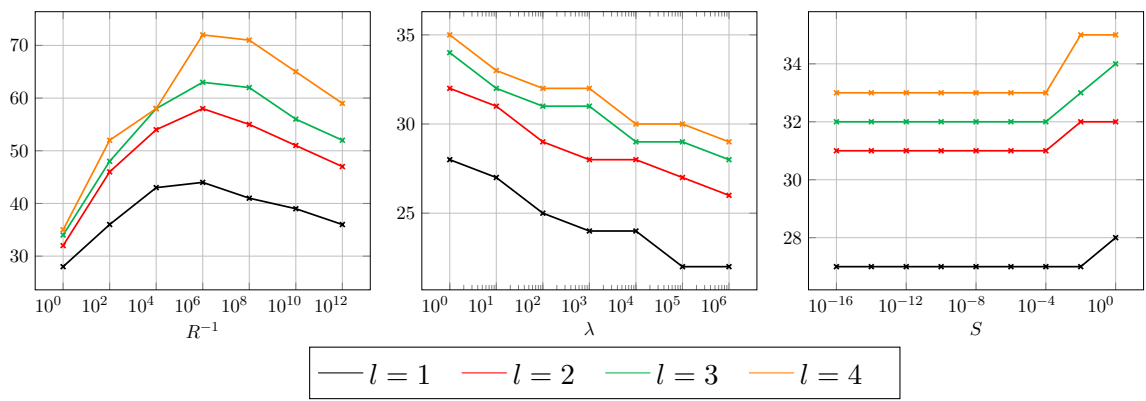
\begin{figure}[h]
  \begin{center}
  \pgfplotstableread{tabledata/Rinv_robustness_h1pc_minres.dat}       \rinv
\pgfplotstableread{tabledata/lam_robustness_h1pc_minres.dat}       \lam
\pgfplotstableread{tabledata/S_robustness_h1pc_minres.dat}       \S
                         
\begin{tikzpicture}
  [
  %spy using outlines={rounded rectangle, width=1.5cm, height=0.15cm, very thick,black,magnification=3, connect spies}, 
  scale=0.65
  ]
  \begin{groupplot}[
    group style ={group size = 3 by 1,horizontal sep=30pt,vertical sep=60pt},
    name=plot2,
    scale=1.0,
    legend columns=4,
    %legend rows=3,
    legend style={text width=3em,text height=0.9em },
    %ylabel=it,
    % ylabel=$U_B$,
    %xmin = 2e2,
    %xmax=2e5,
    xmode=log,
    %ymax=1e-1,
    %ymin=1e-5,
    %ymode=log,
    %ytick = {1e-5,1e-4,1e-3,1e-2,1e-1},
    %y tick label style={/pgf/number format/.cd,fixed,precision=2},
    %x tick label style={/pgf/number format/.cd,fixed,precision=2},
    yticklabel style={text width=3em,align=right},
    xticklabel style={text width=3em,align=left},
    % 
    % log basis x=2,
     grid=major,
    %% major grid style={black!2},
    %legend style={
    %  cells={align=left},
    %  % draw=none,
    %  at={(0.1,0.1)},
    %  anchor = north west
    %},
    % xticklabel=\pgfmathparse{2^\tick}\pgfmathprintnumber{\pgfmathresult}
    ]
        
    \nextgroupplot[legend to name={mylegend}, xlabel=$R^{-1}$, xtick = {1,1e2,1e4,1e6,1e8,1e10,1e12}]
    \addlegendentry{$l=1$}
    \addlegendimage{black}
    \addlegendentry{$l=2$}
    \addlegendimage{red}
    \addlegendentry{$l=3$}
    \addlegendimage{green!70!blue}
    \addlegendentry{$l=4$}
    \addlegendimage{red!50!yellow}
    
    \addplot[line width=1pt, mark=x, mark options={solid}, color=black] table[x=0,y=1]{\rinv};
    \addplot[line width=1pt, mark=x, mark options={solid}, color=red] table[x=0,y=2]{\rinv};
    \addplot[line width=1pt, mark=x, mark options={solid}, color=green!70!blue] table[x=0,y=3]{\rinv};
    \addplot[line width=1pt, mark=x, mark options={solid}, color=red!50!yellow] table[x=0,y=4]{\rinv};

    \nextgroupplot[legend to name=dummy, xlabel = $\lambda$, xtick = {1,1e1,1e2,1e3,1e4,1e5,1e6}]
    \addplot[line width=1pt,mark=x, mark options={solid}, color=black] table[x=0,y=1]{\lam};
    \addplot[line width=1pt,mark=x, mark options={solid}, color=red] table[x=0,y=2]{\lam};
    \addplot[line width=1pt,mark=x, mark options={solid}, color=green!70!blue] table[x=0,y=3]{\lam};
    \addplot[line width=1pt,mark=x, mark options={solid}, color=red!50!yellow] table[x=0,y=4]{\lam};

    \nextgroupplot[legend to name=dummy, xlabel = $S$, xtick = {1e-16,1e-12,1e-8,1e-4,1}]
    \addplot[line width=1pt, mark=x, mark options={solid}, color=black] table[x=0,y=1]{\S};
    \addplot[line width=1pt, mark=x, mark options={solid}, color=red] table[x=0,y=2]{\S};
    \addplot[line width=1pt, mark=x, mark options={solid}, color=green!70!blue] table[x=0,y=3]{\S};
    \addplot[line width=1pt, mark=x, mark options={solid}, color=red!50!yellow] table[x=0,y=4]{\S};
       
  \end{groupplot}

  \node[right=1em,inner sep=0pt] at (5,-1.7) {\pgfplotslegendfromname{mylegend}};

\end{tikzpicture}
 
%%% Local Variables:
%%% mode: latex
%%% TeX-master:"../main"
%%% End:
  \vspace{5pt}
  \caption{Robustness of the preconditioner defined in \eqref{eq::precondone}} \label{fig:extwo}
  \end{center}
\end{figure}

  \begin{figure}[h]
  \begin{center}
  \pgfplotstableread{tabledata/Rinv_robustness_minres.dat}       \rinv
\pgfplotstableread{tabledata/lam_robustness_minres.dat}       \lam
\pgfplotstableread{tabledata/S_robustness_minres.dat}       \S
                         
\begin{tikzpicture}
  [
  %spy using outlines={rounded rectangle, width=1.5cm, height=0.15cm, very thick,black,magnification=3, connect spies}, 
  scale=0.65
  ]
  \begin{groupplot}[
    group style ={group size = 3 by 1,horizontal sep=30pt,vertical sep=60pt},
    name=plot2,
    scale=1.0,
    legend columns=4,
    %legend rows=3,
    legend style={text width=3em,text height=0.9em },
    %ylabel=it,
    % ylabel=$U_B$,
    %xmin = 2e2,
    %xmax=2e5,
    xmode=log,
    %ymax=1e-1,
    %ymin=1e-5,
    %ymode=log,
    %ytick = {1e-5,1e-4,1e-3,1e-2,1e-1},
    %y tick label style={/pgf/number format/.cd,fixed,precision=2},
    %x tick label style={/pgf/number format/.cd,fixed,precision=2},
    yticklabel style={text width=3em,align=right},
    xticklabel style={text width=3em,align=left},
    % 
    % log basis x=2,
     grid=major,
    %% major grid style={black!2},
    %legend style={
    %  cells={align=left},
    %  % draw=none,
    %  at={(0.1,0.1)},
    %  anchor = north west
    %},
    % xticklabel=\pgfmathparse{2^\tick}\pgfmathprintnumber{\pgfmathresult}
    ]
        
    \nextgroupplot[legend to name={mylegend}, xlabel=$R^{-1}$, xtick = {1,1e2,1e4,1e6,1e8,1e10,1e12}]
    \addlegendentry{$l=1$}
    \addlegendimage{black}
    \addlegendentry{$l=2$}
    \addlegendimage{red}
    \addlegendentry{$l=3$}
    \addlegendimage{green!70!blue}
    \addlegendentry{$l=4$}
    \addlegendimage{red!50!yellow}
    
    \addplot[line width=1pt, mark=x, mark options={solid}, color=black] table[x=0,y=1]{\rinv};
    \addplot[line width=1pt, mark=x, mark options={solid}, color=red] table[x=0,y=2]{\rinv};
    \addplot[line width=1pt, mark=x, mark options={solid}, color=green!70!blue] table[x=0,y=3]{\rinv};
    \addplot[line width=1pt, mark=x, mark options={solid}, color=red!50!yellow] table[x=0,y=4]{\rinv};

    \nextgroupplot[legend to name=dummy, xlabel = $\lambda$, xtick = {1,1e1,1e2,1e3,1e4,1e5,1e6}]
    \addplot[line width=1pt,mark=x, mark options={solid}, color=black] table[x=0,y=1]{\lam};
    \addplot[line width=1pt,mark=x, mark options={solid}, color=red] table[x=0,y=2]{\lam};
    \addplot[line width=1pt,mark=x, mark options={solid}, color=green!70!blue] table[x=0,y=3]{\lam};
    \addplot[line width=1pt,mark=x, mark options={solid}, color=red!50!yellow] table[x=0,y=4]{\lam};

    \nextgroupplot[legend to name=dummy, xlabel = $S$, xtick = {1e-16,1e-12,1e-8,1e-4,1}, ytick = {9,10}, ymin = 8.5,ymax = 10.5]
    \addplot[line width=1pt, mark=x, mark options={solid}, color=black] table[x=0,y=1]{\S};
    \addplot[line width=1pt, mark=x, mark options={solid}, color=red] table[x=0,y=2]{\S};
    \addplot[line width=1pt, mark=x, mark options={solid}, color=green!70!blue] table[x=0,y=3]{\S};
    \addplot[line width=1pt, mark=x, mark options={solid}, color=red!50!yellow] table[x=0,y=4]{\S};
       
  \end{groupplot}

  \node[right=1em,inner sep=0pt] at (5,-1.7) {\pgfplotslegendfromname{mylegend}};

\end{tikzpicture}
 
%%% Local Variables:
%%% mode: latex
%%% TeX-master:"../main"
%%% End:
  \vspace{5pt}
  \caption{Robustness of the preconditioner defined in \eqref{eq::precondtwo}} \label{fig:exone}
  \end{center}
\end{figure}

\subsection{Cost-efficiency of the new family of hybridized discretizations}
%Finally, we demonstrate the cost-efficiency of the new families of discretizations.
\subsubsection{DG vs HDG}
In a first step we only illustrate the effect of hybridization
introduced Section~\ref{sec::HDG}. To this end we consider the model
problem: Find $\bu \in \bm{H}^1_0(\Omega)$ such that
\begin{align*}
  - \div(\eps(\bu)) = \bf,
\end{align*}
with a given right hand side $\bf$ and $\Omega = (0,1)^3$. We solve this problem on a given triangulation with 166 elements either with an $H(\div)$-conforming DG or HDG method, i.e. setting $\lambda = 0$ we have the problems:
Find $\bu_h \in \bU_h$ such that
\begin{align}\label{eq::veclapDG}
  a_h^{\text{DG}}(\bu_h, \bv_h) = (f,\bv_h) \quad \forall \bv_h \in \bu_h,
\end{align}
and find $(\bu_h, \hat {\bu}_h) \in \overline{\bU}_h$ such that
\begin{align}\label{eq::veclapHDG}
  a_h^{\text{HDG}}((\bu_h, \hat{\bu}_h), (\bv_h, \hat{\bv}_h)) = (f,\bv_h), \quad \forall (\bv_h, \hat {\bv}_h) \in \overline{\bU}_h.
\end{align}
In Table~\ref{tab::dofsfordgandhdg} we compare the values
\begin{enumerate}
\item[] \texttt{dof}: number of unknowns,
\item[] \texttt{cdof}: number of coupling unknowns,
\item[] \texttt{nze}: number of non-zero entries in thousands of the resulting system matrix,
\end{enumerate}
for varying polynomial degrees $l=1,\ldots,6$ which correspond to the
local order of approximation of $\bm u_h, \hat {\bu}_h$ in
$\text{BDM}_{\ell}(T)/\text{P}^\perp_{\ell}(F)$, for all
$T \in \mathcal{T}_h$ and all $F \in \mathcal{F}_h$. Here
$\text{P}^\perp_{\ell}(F)$ is the space of polynomials of order $l$
that are orthogonal to the normal vector, see the definition of the
space $\widehat {\bm U}_h$ in Section~\ref{sec::HDG}.

First, note that, due to the coupling between element unknowns in the
DG method, no static condensation can be applied, i.e. \texttt{dof} =
\texttt{cdof}.  When solving the linear system one is
particularly interested in the number of non-zero entries. As we can
see, the HDG method clearly outperforms the DG method in case of
higher order approximation ($l \ge 4$). In the low order cases, the
additional unknowns introduced by the new facet unknowns dominate, and
thus no improvement can be expected.

\begin{remark} \label{remark::PHDG}
  The HDG method can further be improved by means of another
  technique, called ``projected jumps'', which was introduced
  in~\cite{LehrenfeldSchoeberl2016high}. This modification allows to
  further decrease the coupling of the HDG method without affecting
  its approximation properties. This essentially compensates the
  overhead of the HDG method in the low order cases by reducing the
  polynomial degree of the space of $\hat {\bu}_h$ to
  $\text{P}^\perp_{\ell-1}(F)$ and adding consistent projections in
  the bilinear form. Although we do not discuss these modifications
  here, we include the corresponding numbers in Table
  \ref{tab::dofsfordgandhdg} in the rows denoted by PHDG.  Note that
  the well-posedness theory and the robustness of the preconditioners
  obtained in this work also hold for the PHDG method.
\end{remark}

\begin{table}[h]
  \scriptsize
  \centering
  \begin{tabular}{@{~}c@{~}|@{~~~~~~}c@{~~~}c@{~~~}c|c@{~~~}c@{~~~}c@{~~~~~~}|c@{~~~}c@{~~~}c@{~~~~~~}}
    %    \toprule
    &  \texttt{dof} &  \texttt{cdof} &  \texttt{nze} & \texttt{dof} &  \texttt{cdof} &  \texttt{nze}& \texttt{dof} &  \texttt{cdof} &  \texttt{nze}\\
    \midrule
    &\multicolumn{3}{c|}{$l=1$} & \multicolumn{3}{c|}{$l=2$}& \multicolumn{3}{c}{$l=3$}\\
    DG & 834 & 834 & 65 & 2664 & 2664 & 454 & 6100 & 6100 & 1945\\
    HDG & 2502 & 2502 & 193 & 6000 & 5004 & 770 & 11660 & 8340 & 2140\\
    PHDG & 1390 & 1390 & 59 & 4332 & 3336 & 342 & 9436 & 6116 & 1151\\
    \midrule
    &\multicolumn{3}{c|}{$l=4$} & \multicolumn{3}{c}{$l=5$}& \multicolumn{3}{c}{$l=6$}\\
    DG & 11640 & 11640 & 6238 & 19782 & 19782 & 16512 & 31024 & 31024 & 38106\\
    HDG & 19980 & 12510 & 4815 & 31458 & 17514 & 9438 & 46592 & 23352 & 16779 \\
    PHDG & 17200 & 9730 & 2913 & 28122 & 14178 & 6185 & 42700 & 19460 & 11652
  \end{tabular} 
  \caption{\texttt{dof}, \texttt{cdof} and \texttt{nze} of the system
    matrix of the DG, HDG and PHDG methods for different polynomial
    degrees $l$.}
  \label{tab::dofsfordgandhdg}
\end{table}

\subsubsection{Mixed vs hybrid mixed methods}

We continue discussing the modifications introduced in
Section~\ref{sec::hybridizeddarcy} with regards to the following Darcy
model problem: Find
$(\bw,p) \in \bm{H}(\div)(\Omega) \times L^2(\Omega)$ such that
\begin{align*}
  \bw + \nabla p &= 0, \\
  \div(\bw)  &= g,
\end{align*}
for a given right hand side $g$ on the domain $\Omega = (0,1)^3$. We
use the same mesh as in the previous section, and consider the
problems: Find $(\bw_h, p_h ) \in \bm W_h \times P_h$, such that
\begin{subequations}  \label{eq::darcymixed}
\begin{alignat}{2}
  (\bw_{h},\bz_{h}) {-} (p_{h},\divv \bz_{h}) &= 0,& \qquad& \forall \bz_h\in  \bm W_h, \\
  -(\divv\bw_{h},q_{h})  &= -(g,q_{h}), &\qquad& \forall q_h \in P_h.
\end{alignat}  
\end{subequations}
and find $(\bw_h, (p_h,\hat p_h) ) \in \bm W^-_h \times \overline P_h$, such that
\begin{subequations}\label{eq::darcyhybridmixed}
\begin{alignat}{2}
  (\bw_{h},\bz_{h}) {-} b((p_{h}, \hat p_h), \bz_{h}) &= 0, &\quad& \forall \bz_h\in  \bm W^-_h, \\
-  b((q_{h},\hat q_h),\bw_{h}) &= -(g,q_{h}), &\quad& \forall(q_h, \hat q_h)) \in\overline P_h.
\end{alignat}
\end{subequations}
Note that equation \eqref{eq::darcymixed} only allows a static
condensation of the following degrees of freedom: all local
(element-associated) degrees of freedom of the space $\bm W_h$,
i.e. element-wise basis functions with a vanishing normal trace, and all
high-order (considering a standard $L^2$-Dubiner basis) basis
functions of $P_h$ such that element-wise constant basis functions
remain in the system.  In contrast to this, system
\eqref{eq::darcyhybridmixed} allows us to eliminate all degrees of
freedom associated with the basis functions of the spaces $\bm W_h^-$
and $P_h$.  In Table~\ref{tab::dofsformandhm}, we again present the
corresponding numbers as discussed above, where M represents the
discretization of \eqref{eq::darcymixed}, and HM of
\eqref{eq::darcyhybridmixed}.  Here, the order $l$ corresponds to the
local approximation polynomial degree of $\bm w_h, p_h, \hat p_h$ in
$\text{RT}_{\ell}(T)/\text{P}_{\ell}(T)/\text{P}_{\ell}(F)$, for all
$T \in \mathcal{T}_h$ and all $F \in \mathcal{F}_h$.  
Further, we observe that the hybrid
mixed method produces always a smaller number of non-zero entries than the standard mixed method although 
the difference is negligible. 
%Similarly, as
%before, when considering the number of non-zero entries, the hybrid
%mixed method is always better than the standard mixed method although
%the improvement in negligible.  
However, 
%keep in mind that 
the main purposes of hybridization are a reduction of the number of
coupling dof and obtaining a condensed system,
cf.~\eqref{eq::precondtwo}, which is symmetric positive definite, see
also~\cite{CockburnEtAl2009unified, MR2051067}. The latter allows us
to use preconditioners for $H^1$-elliptic problems like standard
algebraic multigrid methods.

\begin{table}
  \scriptsize
  \centering
  \begin{tabular}[]{@{~}c@{~}|@{~~~~~~}c@{~~~}c@{~~~}c|c@{~~~}c@{~~~}c@{~~~~~~}|c@{~~~}c@{~~~}c@{~~~~~~}}
    %    \toprule
    &  \texttt{dof} &  \texttt{cdof} &  \texttt{nze} & \texttt{dof} &  \texttt{cdof} &  \texttt{nze}& \texttt{dof} &  \texttt{cdof} &  \texttt{nze}\\
    \midrule
    &\multicolumn{3}{c|}{$l=0$} & \multicolumn{3}{c|}{$l=1$}& \multicolumn{3}{c}{$l=2$}\\
    M & 552 & 552 & 4k & 2320 & 1324 & 26k & 5968 & 2482 & 94k\\
    HM & 1108 & 278 & 2k & 3988 & 834 & 21k & 9304 & 1668 & 86k\\
    \midrule
    &\multicolumn{3}{c|}{$l=3$} & \multicolumn{3}{c}{$l=4$}& \multicolumn{3}{c}{$l=5$}\\
    M & 12160 & 4026 & 251k & 21560 & 5956 & 555k & 34832 & 8272 & 1077k\\
    HM & 17720 & 2780 & 238k & 29900 & 4170 & 535k & 46508 & 5838 & 1049k
  \end{tabular}
  \caption{\texttt{dof}, \texttt{cdof} and \texttt{nze} of the system
    matrix for different polynomial degrees $l$ in the discretizations
    of problems \eqref{eq::darcymixed}, 
    \eqref{eq::darcyhybridmixed}.}
   \label{tab::dofsformandhm}
 \end{table}

\section{Acknowledgement}

The second and the last author acknowledge the support by the Austrian
Science Fund (FWF) through the research programm ``Taming complexity
in partial differential systems'' (F65) - project ``Automated
discretization in multiphysics'' (P10).

\bibliographystyle{plain}
\bibliography{HDG_Biot}

\end{document}